\documentclass[sts]{imsart}
\input{commands.tex}
\newcommand{\RR}{\mathbb{R}}
\newcommand{\p}{\mathbb{P}}

\newtheorem{assumption}{Assumption}{\it}{\rm}
\newtheorem{applemma}{Lemma}[section]{\bf}{\it}

\newcommand*{\ep}{\varepsilon}

\DeclareMathOperator{\diam}{diam}
\DeclareMathOperator{\supp}{supp}

\begin{document}

\begin{frontmatter}
\title{Sharp asymptotic and finite-sample rates of convergence of empirical measures in Wasserstein distance}
\runtitle{Sharp rates of Wasserstein convergence}

\begin{aug}
\author{\fnms{Jonathan}~\snm{Weed}\thanksref{t1,t2}\ead[label=weed]{jweed@mit.edu}}
\and
\author{\fnms{Francis}~\snm{Bach}\thanksref{t2}\ead[label=bach]{francis.bach@inria.fr}}

\affiliation{Massachusetts Institute of Technology\\INRIA -- ENS}
\thankstext{t1}{Supported in part by the Chaire \'Economie des nouvelles donn\'ees, the data science Joint Research Initiative with the Fonds AXA pour la recherche, and the Initiative de Recherche ``Machine Learning for Large-Scale Insurance" from the Institut Louis Bachelier}
\thankstext{t2}{Supported in part by NSF Graduate Research Fellowship 1122374}

\address{{Jonathan Weed}\\
Department of Mathematics \\ 
Massachusetts Institute of Technology\\
Cambridge, MA 02139, USA\\
\printead{weed}
}

\address{{Francis Bach}\\
INRIA, D\'epartement d'informatique de l'ENS \\
CNRS \\
PSL Research University \\
75005 Paris, France\\
\printead{bach}}

\runauthor{Weed and Bach}
\end{aug}

\begin{abstract}
The Wasserstein distance between two probability measures on a metric space is a measure of closeness with applications in statistics, probability, and machine learning.
In this work, we consider the fundamental question of how quickly the empirical measure obtained from $n$ independent samples from $\mu$ approaches $\mu$ in the Wasserstein distance of any order.
We prove sharp asymptotic and finite-sample results for this rate of convergence for general measures on general compact metric spaces.
Our finite-sample results show the existence of multi-scale behavior, where measures can exhibit radically different rates of convergence as $n$ grows.
\end{abstract}
\begin{keyword}[class=KWD]
Wasserstein metrics, quantization, optimal transport
\end{keyword}
\end{frontmatter}
\section{Introduction}
The Wasserstein distance is a measure of the closeness of probability distributions on metric spaces which has proven extremely useful in data science and machine learning, particularly in the analysis of images~\cite{RubTomGui00,SolGoeCut15,SanLin11} and text~\cite{KusSunKol15,ZhaLiuLua16}.
This distance is especially useful in tasks such as classification and clustering, since it captures geometric features of the underlying data.
Moreover, unlike other measures of distance between distributions, such as the Kullback-Leibler divergence or total variation distance, the Wasserstein distance between two measures is generally finite even when neither measure is absolutely continuous with respect to the other, a situation that often arises when considering empirical distributions arising in practice.

Concretely, the Wasserstein distance measures how closely two measures can be coupled, where closeness is measured with respect to the underlying metric.
For $p \in [1, \infty)$, the Wasserstein distance of order $p$ between two distributions $\mu$ and $\nu$ on a metric space $(X, D)$ is defined as
\begin{equation*}
W_p(\mu, \nu) := \inf_{\gamma \in \cC(\mu, \nu)} \left(\int D(x, y)^p \mathrm{d}\gamma(x, y)\right)^{1/p}\,,
\end{equation*}
where the infimum is taken over all \emph{couplings} $\gamma$ of $\mu$ and $\nu$, that is, distributions on $X \times X$ whose first and second marginals agree with $\mu$ and $\nu$, respectively~\cite{Kan42}.
It can be shown that $W_p$ is a metric on the space of probability measures on~$X$~\cite[Chapter 6]{Vil09}.

In statistical contexts, direct access to a distribution of interest $\mu$ is generally not available; instead, the statistician has access to i.i.d. samples from $\mu$, or, equivalently, to an empirical distribution $\hat \mu_n$.
For $\hat \mu_n$ to serve as a reasonable proxy to $\mu$, we should insist that $\hat \mu_n$ and $\mu$ are close in the Wasserstein sense.
In the large-$n$ limit, this is indeed the case: if $X$ is compact and separable and $\mu$ is a Borel measure, then for any $p \in [1, \infty)$,
\begin{equation*}
W_p(\mu, \hat \mu_n) \to 0 \text{ $\mu$-a.s.}
\end{equation*}
This result follows from the fact that Wasserstein distances metrize weak convergence~\cite[Corollary~6.13]{Vil09} and the fact that empirical measure $\hat \mu_n$ converges weakly to $\mu$ almost surely~\cite{Var58}.

This raises the question of quantifying the rate of convergence of $\hat \mu_n$ to $\mu$ in $W_p$ distance either in expectation or with high probability.
This question is closely related to the \emph{optimal quantization} problem~\cite{GraLus07}, which asks how well a given distribution $\mu$ can be approximated by a discrete distribution with finite support, such as the empirical measure $\hat \mu_n$.
This problem has wide applications in information theory, under the name rate distortion~\cite{Sha60,cover2012elements}; machine learning~\cite{GuiRos12,NovEst14}; and numerical methods~\cite{CorPag15}.
Unfortunately, like many statistics and optimization problems involving measures on~$\RR^d$, the convergence of $\hat \mu_n$ to $\mu$ exhibits the so-called ``curse of dimensionality''~\cite{bellman1961adaptive}.
In the high-dimensional regime, the empirical distribution $\hat \mu_n$ becomes less and less representative as $d$ becomes large~\cite{friedman2001elements}, so that in the convergence of $\hat \mu_n$ to $\mu$ in Wasserstein distance is slow.

This curse of dimensionality seems unavoidable.
It was noted by Dudley~\cite{Dud68} that any measure $\mu$ that is absolutely continuous with respect to the Lebesgue measure on $\RR^d$ satisfies
\begin{equation*}
\E[W_1(\mu, \hat \mu_n)] \gtrsim n^{-1/d}\,.
\end{equation*}
This lower bound is asymptotically tight: Dudley showed that, when $d > 2$, a compactly supported measure on $\RR^d$ satisfies
\begin{equation*}
\E[W_1(\mu, \hat \mu_n)] \lesssim n^{-1/d}\,.
\end{equation*}
These results have been sharpened over the years, culminating in a tight almost sure limit theorem due to Dobri\'c and Yukich~\cite{DobYuk95}.

In short, these arguments establish that a $d$-dimensional measure yields a convergence rate in the $W_1$ distance of exactly $n^{-1/d}$.
These results are in a sense disappointing, since they show that slow convergence is a necessary price to pay for high-dimensional data.
However, they raise several questions about the behavior of the Wasserstein metric in practice:
\begin{itemize}
\item When can faster rates be achieved for measures that are not absolutely continuous with respect to the Lebesgue measure?
\item Under what conditions can sharper finite-sample (i.e., non-asymptotic) rates be obtained?
\end{itemize}

Our goal in this work is to answer the above questions in a very general sense.
We consider a bounded metric space $X$ subject to mild technical conditions and prove upper and lower bounds on the rate of convergence for $W_p(\mu, \hat \mu_n)$ for all $p \in [1, \infty)$.
Inspired by the bounds of~\cite{Dud68}, we show essentially tight asymptotic convergence rates for a large class of measures.
In particular, our upper and lower bounds improve on many existing results in the literature~\cite{Dud68,BoiLeg14,DerSchSch13,FouGui15}, either in the generality with which they are applicable or the rates which are obtained.
These results show that the rate of convergence of $W_p(\mu, \hat \mu_n)$ depends on a notion of the intrinsic dimension of the measure $\mu$, which can be significantly smaller than the dimension of the metric space on which $\mu$ is defined.

Our second goal is to obtain finite-sample results which hold outside the asymptotic regime.
A common phenomenon in practice is for a measure to exhibit different dimensional structure at different scales; this so-called multi-scale behavior arises in a range of applications~\cite{LitMagRos16,WakDonCho05,StaMurBig98}.
We show that the convergence of $\hat \mu_n$ to $\mu$ in $W_p$ for such measures can exhibit wildly different rates as $n$ increases.
In particular, they can enjoy a much faster convergence rate when $n$ is small than they do in the large-$n$ limit.
We illustrate this phenomenon via a number of examples inspired by measures that arise in practice.

In both of the above regimes, we consider exclusively the question of how the expectation $\E[W_p(\mu, \hat \mu_n)]$ behaves.
Controlling this quantity suffices to understand the behavior of $W_p(\mu, \hat \mu_n)$ because the Wasserstein distance concentrates very well around its expectation, a fact which we prove in Section~\ref{sec:concentration}.
Combining this observation with the bounds on we prove on $\E[W_p(\mu, \hat \mu_n)]$ yields sharp high-probability bounds.

We end by giving applications of our work to machine learning and statistics and sketch directions for future work.

\section{Preliminaries}\label{sec:prelim}
In this section, we present the mild assumptions on $X$ under which our results hold.
We also give background on Wasserstein distances and compare our results to prior work.

\subsection{Assumptions}
We are concerned with measures on a compact metric space~$X$.
The first assumption is entirely standard and allows us to avoid many measure-theoretic difficulties:
\begin{assumption}\label{assumption:polish}
The metric space $X$ is Polish, and all measures are Borel.
\end{assumption}

Since we limit ourselves to the compact case, $\diam(X)$ is necessarily finite, and for normalization purposes we assume the following.
\begin{assumption}\label{assumption:diameter}
$\diam(X) \leq 1$.
\end{assumption}
Assumption~\ref{assumption:diameter} can always be made to hold by a simple rescaling of the metric.

\subsection{Background on Wasserstein distances}
Above, we defined the Wasserstein $p$ distance between two distributions $\mu$ and $\nu$ on $(X, D)$ as
\begin{equation*}
W_p(\mu, \nu) := \inf_{\gamma \in \cC(\mu, \nu)} \left(\int D(x, y)^p \mathrm{d}\gamma(x, y)\right)^{1/p}\,.
\end{equation*}
A second definition, due to Monge~\cite{Mon81,San15}, reads as follows:
\begin{equation*}
W_p(\mu, \nu) := \inf_{T: \mu \circ T^{-1} = \nu} \left(\int D(x, T(x))^p \mathrm{d}\mu(x)\right)^{1/p}\,,
\end{equation*}
where the infimum is taken over all \emph{transports} $T: X \to X$ such that the pushforward measure $\mu \circ T^{-1}$ equals $\nu$.
In general, this infimum in the Monge definition is not attained.
This formulation has an easy geometric interpretation: the Wasserstein distance measures the cost of moving mass from the measure $\mu$ to the measure $\nu$ with respect to the metric of $X$.

The special case $W_1$, which is also known as the Kantorovich-Rubinstein distance~\cite{Vil09} or earth mover distance~\cite{RubTomGui00}, has a particularly simple dual representation:
\begin{equation}\label{eqn:w1_dual}
W_1(\mu, \nu) = \sup_{f \in \mathrm{Lip}(X)} \left| \int f \mathrm d \mu - \int f \mathrm d \nu\right|\,,
\end{equation}
where the supremum is taken over all $1$-Lipschitz functions on $X$~\cite{KanRub58}.
This dual representation makes $W_1$ significantly easier to bound~\cite[Remark~6.6]{Vil09}.
A more general dual formulation is also available for $W_p$ for $p \neq 1$, but it is less simple to manipulate; more details appear in Section~\ref{sec:concentration}.

\subsection{Related work}
Our work generalizes several strands of work on the convergence rates of the empirical measure in Wasserstein distances.
The first strand, inaugurated by Dudley~\cite{Dud68}, focuses on obtaining rates of convergence of $\hat \mu_n$ to $\mu$ based on the inherent dimension of the measure $\mu$.
In that paper, Dudley obtained results matching the ones we present in Section~\ref{sec:asymptotic} for the convergence of $\hat \mu_n$ to $\mu$ in $W_1$ distance, with a rate depending on the covering number of the support of~$\mu$.
Dudley's argument relied extensively on the dual characterization of $W_1$ as a supremum over Lipschitz test functions, as in~\eqref{eqn:w1_dual}.
As a result, his technique does not extend to $W_p$ for $p \neq 1$.

An extension of Dudley's techniques to other values of $p$ appears in~\cite{BoiLeg14}.
Their approach is similar to ours, but our analysis is tighter: in the language of Section~\ref{subsec:other_dimensions}, they prove an upper bound based on the quantity $d_M$ whereas we obtain an upper bound based on the smaller quantity $d_p^*$.

A second strand~\cite{FouGui15,DerSchSch13} focuses on measures on $\RR^d$ and obtains upper bounds on the rate of convergence of $\hat \mu_n$ to $\mu$ in $W_p$ for all $p \in (0, \infty)$.
The upper bounds arise from the construction of explicit couplings between $\hat \mu_n$ and $\mu$.
The construction of these couplings depends on the fact that $\mu$ is a measure on $\RR^d$ and does not extend easily to general metric spaces.
Moreover, the rates obtained, while tight for measures which are absolutely continuous with respect to the Lebesgue measure on $\RR^d$, are not tight in general, as we show below.
Nevertheless, the techniques employed in~\cite{FouGui15,DerSchSch13} are very similar to those employed in~\cite{BoiLeg14}, and we follow the same approach.

We also note several other recent works~\cite{Boi11,BolGuiVil07} which have focused on obtaining tail bounds for the quantity $W_p(\mu, \hat \mu_n)$.
The arguments of~\cite{BolGuiVil07} rely on transportation inequalities such as the celebrated Bobkov-G\"{o}tze inequality~\cite{BobGoe99}.
These arguments were simplified in~\cite{Boi11}, but, as noted in~\cite{BoiLeg14}, the analysis becomes much easier if the development of tail bounds is divided into two steps: an estimate of the expectation $\E[W_p(\mu, \hat \mu_n)]$ and a concentration bound showing how well $W_p(\mu, \hat \mu_n)$ concentrates near that expectation.
This is the approach we adopt: bounds on the expected value appear in Section~\ref{sec:asymptotic} and~\ref{sec:finite}, and concentration bounds are obtained in Section~\ref{sec:concentration}.

\subsection{Notation}
The metric on $X$ will always be denoted $D(\cdot, \cdot)$.
Given a point $x \in X$ and $r > 0$, denote by~$B(x, r)$ the open ball of radius $r$ around $x$.
The symbol $\log$ denotes the natural logarithm.
The notation $f(n) \lesssim g(n)$ indicates that there exists a constant $C$, depending on $f$ and $g$ but not $n$, such that $f(n) \leq C g(n)$ for all $n$.

\section{Dyadic transport}\label{sec:dyadic}
In order to prove upper bounds for the Wasserstein distance, we show how to construct an efficient transport between two measures based on a recursive partitioning of the underlying space.
By analogy with the dyadic intervals in $\RR$, we seek a sequence of partitions of a set such that each partition is a refinement of the last, and such that the elements of the $k$th partition have diameter of order~$\delta^k$ for some $\delta$.

We formalize these requirements in the following definition~\cite[Section~A]{Dav88}.
Denote by~$\cB(X)$ the Borel subsets of $X$.
\begin{definition}\label{def:dyadic}
A \emph{dyadic partition} of a set $S \subseteq X$ with parameter $\delta < 1$ is a sequence~$\{\cQ^k\}_{1 \leq k \leq k^*}$ with $\cQ^k \subseteq \cB(X)$ possessing the following properties:
\begin{itemize}
\item The sets in $\cQ^k$ form a partition of $S$.
\item If $Q \in \cQ^k$, then $\diam(Q) \leq \delta^k$.
\item If $Q^{k+1} \in \cQ^{k+1}$ and $Q^k \in \cQ^k$, then either $Q^{k+1} \subseteq Q^k$ or $Q^{k+1} \cap Q^k = \emptyset$. That is, the $(k+1)$th partition is a refinement of the $k$th partition.
\end{itemize}
\end{definition}

The following Proposition bounds $W_p^p(\mu, \nu)$ in terms of the mass $\mu$ and $\nu$ assign to elements of a dyadic partition.
\begin{proposition}\label{prop:wp_dyadic_bound}
Let $\mu$ and $\nu$ be two Borel probability measures on $X$, and let $S$ be a set such that $\mu(S) = \nu(S) = 1$.
If $\{\cQ^k\}_{1 \leq k \leq k^*}$ is a dyadic partition of~$S$ with parameter $\delta$, then
\begin{equation*}
W_p^p(\mu, \nu) \leq \delta^{k^* p} + \sum_{k = 1}^{k^*} \delta^{(k-1)p} \sum_{Q_i^k \in \cQ^k} |\mu(Q_i^k) - \nu(Q_i^k)|\,.
\end{equation*}
\end{proposition}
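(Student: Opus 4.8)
The plan is to produce a single coupling $\gamma\in\cC(\mu,\nu)$ for which $\int D(x,y)^p\,\mathrm d\gamma(x,y)$ is at most the right-hand side; since $W_p^p(\mu,\nu)$ is the infimum of this integral over couplings, that suffices. It is essential to work with one $\gamma$ rather than to chain intermediate measures, since for $p>1$ the triangle inequality for $W_p$ would cost a useless factor $(a+b)^p$. The coupling will be the \emph{hierarchical greedy matching} associated with the partition tree: a unit of $\mu$-mass is coupled to a unit of $\nu$-mass only when the two lie in a common cell, and we match as much mass as possible inside the smallest cells first, pushing whatever is left over up to the next coarser partition.

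Concretely, set $\mu^{(0)}=\mu$, $\nu^{(0)}=\nu$, and for $j=1,\dots,k^*$ process the partition $\cQ^{k^*-j+1}$: inside each cell $Q$ of it, set $\gamma^{(j)}_Q:=(\mu^{(j-1)}|_Q\otimes\nu^{(j-1)}|_Q)/\max\{\mu^{(j-1)}(Q),\nu^{(j-1)}(Q)\}$ (and $\gamma^{(j)}_Q:=0$ when this maximum vanishes), which has total mass $\min\{\mu^{(j-1)}(Q),\nu^{(j-1)}(Q)\}$ and exhausts whichever residual measure on $Q$ — $\mu^{(j-1)}|_Q$ or $\nu^{(j-1)}|_Q$ — has the smaller mass; let $\mu^{(j)},\nu^{(j)}$ denote what remains. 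After round $k^*$ the residuals $\mu^{(k^*)},\nu^{(k^*)}$ have equal total mass (each round removes equal amounts of $\mu$- and $\nu$-mass from every cell), so one more product measure $\gamma^{(k^*+1)}$ couples them inside $S$, whose diameter is at most $1$ by Assumption~\ref{assumption:diameter}. Put $\gamma:=\sum_j\sum_Q\gamma^{(j)}_Q$; it is a genuine coupling because $\mu=\sum_j(\text{first marginal of }\gamma^{(j)})$ telescopes together with $\mu^{(k^*+1)}=0$, and similarly for $\nu$.

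The estimate rests on two bookkeeping facts. First, matching inside subcells leaves the net discrepancy of every supercell unchanged, so an induction built on the refinement property of Definition~\ref{def:dyadic} gives $\mu^{(j-1)}(Q)-\nu^{(j-1)}(Q)=\mu(Q)-\nu(Q)$ for every $Q\in\cQ^{k^*-j+1}$; hence after round $j$ exactly $(\mu(Q)-\nu(Q))_+$ units of unmatched $\mu$-mass and $(\nu(Q)-\mu(Q))_+$ units of unmatched $\nu$-mass remain in $Q$, and summing over the partition (using $\mu(S)=\nu(S)$) the surviving total mass is $M_j:=\tfrac12\sum_{Q\in\cQ^{k^*-j+1}}|\mu(Q)-\nu(Q)|$. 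Second, every unit coupled in round $j$ moves within a set of diameter at most $\delta^{k^*-j+1}$ — and the final batch within $S$, of diameter at most $1$ — while the mass coupled in round $j$ is $M_{j-1}-M_j$, with the convention $M_0:=1$; therefore
\begin{equation*}
\int D(x,y)^p\,\mathrm d\gamma \;\leq\; \delta^{k^*p}(1-M_1)+\sum_{j=2}^{k^*}\delta^{(k^*-j+1)p}(M_{j-1}-M_j)+M_{k^*}\,.
\end{equation*}
Bounding $M_{j-1}-M_j\leq M_{j-1}$, substituting the formula for $M_j$, and reindexing by $k:=k^*-j+2$ converts the right-hand side into $\delta^{k^*p}+\tfrac12\sum_{k=1}^{k^*}\delta^{(k-1)p}\sum_{Q\in\cQ^{k}}|\mu(Q)-\nu(Q)|$, which is the claimed bound (in fact with a spare factor $\tfrac12$).

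I expect the only genuine work to lie in the first bookkeeping fact: one must check that pushing unmatched mass up from $\cQ^{k+1}$ to $\cQ^{k}$ produces a residual that is still additive over cells, i.e.\ that for $Q\in\cQ^{k}$ the residual $\mu^{(j-1)}(Q)$ equals the sum of the residuals over the $\cQ^{k+1}$-cells partitioning $Q$ — precisely the place where the refinement condition of Definition~\ref{def:dyadic} enters, and the reason each residual mass equals the positive or negative part of $\mu(Q)-\nu(Q)$. The other step worth isolating is the telescoping inequality $M_{j-1}-M_j\leq M_{j-1}$: it is what allows the diameter-$\delta^{k^*-j+1}$ cost incurred in round $j$ to be charged to the discrepancy sum one level finer, so that the weights emerge as $\delta^{(k-1)p}$ with the single leftover term $\delta^{k^*p}$ standing alone. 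Everything else — measurability of $\gamma$, skipping zero-mass cells — is routine.
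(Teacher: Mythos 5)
Your construction is correct: the coupling is well defined (the matched piece in each cell is a scalar multiple of the residual, so the residuals stay nonnegative and the marginals telescope to $\mu$ and $\nu$), the discrepancy invariant $\mu^{(j-1)}(Q)-\nu^{(j-1)}(Q)=\mu(Q)-\nu(Q)$ does follow by induction because each round removes equal mass from both residuals inside every processed cell and each coarser cell is the disjoint union of the finer cells it contains, and your final bookkeeping (with $M_{j-1}-M_j\le M_{j-1}$ and the reindexing $k=k^*-j+2$) indeed yields the stated bound, even with a spare factor $\tfrac12$ on the sum. The idea is the same as the paper's — a single multiscale coupling built from the dyadic partition, assembled by summing couplings rather than chaining $W_p$ triangle inequalities — but the recursion runs in the opposite direction, and this changes the key lemma. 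The paper works coarse-to-fine: it splits off at each level the excess measures $\pi_k,\rho_k$ that cannot be matched inside level-$k$ cells, couples them inside level-$(k-1)$ cells (Lemma~\ref{lem:diameter_bound}), and must then relate the residual discrepancies $|\mu_k(Q_i^k)-\nu_k(Q_i^k)|$ back to the original $|\mu(Q_i^k)-\nu(Q_i^k)|$; that comparison (the ``scalar multiple'' argument in Lemma~\ref{lem:same_mass_pi_rho}) costs a factor $2$. Your fine-to-coarse greedy matching makes the residual mass in each cell exactly $(\mu(Q)-\nu(Q))_+$ resp.\ $(\nu(Q)-\mu(Q))_+$, so no such comparison is needed — which is precisely where your extra $\tfrac12$ comes from — at the price of charging the cost of each round to the discrepancy one level finer via $M_{j-1}-M_j\le M_{j-1}$, and of using $\diam(S)\le 1$ for the terminal coupling of the never-matched mass (the roles of the standalone $\delta^{k^*p}$ term and the $k=1$ term are mirrored relative to the paper). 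Both proofs are equally rigorous; yours is marginally sharper and arguably has cleaner bookkeeping, while the paper's decomposition into $\pi_k,\rho_k$ isolates the two reusable lemmas it states in the appendix.
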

The upper bound in Proposition~\ref{prop:wp_dyadic_bound} arises from the explicit construction of a coupling between $\mu$ and $\nu$.
Proposition~\ref{prop:wp_dyadic_bound} is not new and appears to have been rediscovered many times.
In particular, it is implicit in the proof of~\cite[Proposition~1.1]{BoiLeg14}, and similar results have appeared before in papers bounding the convergence of $W_p^p$ when $X = \RR^d$~\cite{DerSchSch13,FouGui15}.
An analogous bound has also been used in the computer science community~\cite{Ind03,BaNguNgu11}.
The idea of bounding the quantity $W_p^p(\mu, \nu)$ by considering the mass each measure assigns to elements of a sequence of partitions is present also in~\cite{AjtKomTus84}, where it is used to obtain sharp results for the case $X = [0, 1]^2$.
We include a proof in Appendix~\ref{appendix:omitted} for clarity and because we could not find a suitably general version explicitly stated in the literature.

Proposition~\ref{prop:wp_dyadic_bound} is stated for $W_p^p$, but can easily adapted to optimal transport with a general cost $c(\cdot, \cdot)$ by replacing the requirement that $\diam(Q) \leq \delta^k$ in Definition~\ref{def:dyadic} by the requirement that $\sup_{x, y \in Q} c(x, y) \leq \delta^k$.

Boissard and Le Gouic~\cite{BoiLeg14} used a version of Proposition~\ref{prop:wp_dyadic_bound} to prove a bound on $W_p^p(\mu, \hat \mu_n)$ based on the \emph{covering number} of the set $S$, a definition of which appears in Section~\ref{sec:asymptotic}, below.
However, their results are not sharp, and they do not recover the rates obtained in~\cite{Dud68} for the case $p = 1$.
In Section~\ref{sec:asymptotic}, we show how to improve their argument to obtain sharper results, which extend the rates from~\cite{Dud68} to all $p \in [1, \infty)$.

\section{Asymptotic upper and lower bounds}\label{sec:asymptotic}
In this section, we show asymptotic upper and lower bounds for $W_p$ that hold for all $p \in [1, \infty)$.
These bounds extend results of~\cite{Dud68} to the case $p \neq 1$ and improve the bounds of~\cite{BoiLeg14} by focusing on a set $S$ to which $\mu$ assigns mass of \emph{almost} $1$ rather than on the larger set $\supp(\mu)$.
We will also show a broad class of measures for which our bounds are asymptotically tight.

\subsection{Definitions}To state our bounds we will define several notions of dimension of a measure.
\begin{definition}
Given a set $S \subseteq X$, the \emph{$\ep$-covering number of $S$}, denoted~$\cN_\ep(S)$, is the minimum $m$ such that there exists $m$ closed balls $B_1, \dots, B_m$ of diameter~$\ep$ such that $S \subseteq \bigcup_{1 \leq i \leq m} B_i$.
The \emph{$\ep$-dimension of $S$} is the quantity
\begin{equation*}
d_\ep(S) := \frac{\log \cN_\ep(S)}{- \log \ep}\,.
\end{equation*}
\end{definition}

When working with measures instead of sets, it is convenient to be able to ignore a small fraction of the mass.
The following definition appears in~\cite{Dud68}, which notes a connection to the $\ep;\delta$ entropy introduced by~\cite{PosRodRum67}.
\begin{definition}
Given a measure $\mu$ on $X$, the \emph{($\ep$, $\tau$)-covering number} is
\begin{equation*}
\cN_\ep(\mu, \tau) := \inf \{\cN_\ep(S): \mu(S) \geq 1- \tau\}
\end{equation*}
and the \emph{($\ep$, $\tau$)-dimension} is
\begin{equation*}
d_\ep(\mu, \tau) := \frac{\log \cN_\ep(\mu, \tau)}{-\log \ep}\,.
\end{equation*}
\end{definition}

For convenience, let
\begin{align*}
\cN_\ep(\mu) & := \cN_\ep(\mu, 0)\,, \\
d_\ep(\mu) & := d_\ep(\mu,0)\,.
\end{align*}

Note that $\cN_\ep(\mu) = \cN_\ep(\supp(\mu))$, and that $\cN_\ep(\mu, \tau)$ and $d_\ep(\mu, \tau)$ increase as $\tau$ decreases.

We now define our main notions of dimension of a measure.
\begin{definition}
The \emph{upper and lower Wasserstein dimensions} are respectively
\begin{align*}
d_p^*(\mu) & = \inf \{s \in (2p, \infty): \limsup_{\ep \to 0} d_\ep(\mu, \ep^{\frac{sp}{s-2p}}) \leq s\}\,, \\
d_*(\mu) & = \lim_{\tau \to 0} \liminf_{\ep \to 0} d_\ep(\mu, \tau)\,.
\end{align*}
\end{definition}
Note that the monotonicity of $d_\ep(\mu, \tau)$ in $\tau$ implies that the limit in the definition of $d_*(\mu)$ exists.
The definition of $d_p^*$ is complicated by the fact that the behavior of the Wasserstein distance is very different when the dimension is small.
For convenience we only treat the case where the dimension is larger than $2p$.
We note that the monotonicity of $d_\ep(\mu, \tau)$ in $\tau$ also implies that $d_* \leq d_p^*$ for all $p$.

Our definition of the upper Wasserstein dimension is new.
Dudley~\cite{Dud68} considered measures satisfying a bound of the form $\cN_\ep(\mu, \ep^{\frac{s}{s-2}}) \leq C \ep^{-s}$ for all sufficiently small $\ep$; the definition of $d_p^*(\mu)$ is the correct generalization to the $p \neq 1$ case.
The lower Wasserstein dimension was introduced by Young~\cite{You82}, who credits the idea to Ledrappier~\cite{Led81}, in the context of dynamical systems.
The term Wasserstein dimension is ours, and is justified by Theorem~\ref{thm:main_asymptotic} below.

\subsection{Comparison with other notions of dimension}\label{subsec:other_dimensions}
To make it easier to interpret the quantities $d_p^*(\mu)$ and $d_*(\mu)$, we sketch here their relationship with two other well known notions of dimensions for the measure $\mu$, the Minkowski dimension (also known as the Minkowski-Bouligand or box-counting dimension) and the Hausdorff dimension.
Both quantities have long been studied in fractal and metric geometry~\cite{falconer2004fractal}.
\begin{definition}
The \emph{Minkowski dimension} of a set $S$ is the quantity
\begin{equation*}
\dim_M(S) := \limsup_{\ep \to 0} d_\ep(S)\,.
\end{equation*}
The \emph{$d$-Hausdorff measure} of $S$ is
\begin{equation*}
\cH^d(S) = \lim_{\ep \to 0} \inf\left\{\sum_{k=1}^\infty r_k^d: S \subseteq \bigcup_{k=1}^\infty B(x_i, r_i); \,\,r_k \leq \ep \, \forall k\right\}\,,
\end{equation*}
and its \emph{Hausdorff dimension} is
\begin{equation*}
\dim_H(S) := \inf \{d: \cH^d(S) = 0\}\,.
\end{equation*}
Given a measure $\mu$, the \emph{Minkowski and Hausdorff dimensions} of $\mu$ are respectively
\begin{align*}
d_M(\mu) &:= \inf \{\dim_M(S): \mu(S) = 1\}\,, \\
d_H(\mu) &:= \inf \{\dim_H(S): \mu(S) = 1\}\,.
\end{align*}
\end{definition}

We note that the quantities $d_M(\mu)$ and $d_H(\mu)$ are upper and lower bounds on the Wasserstein dimensions.
\begin{proposition}\label{prop:ordering}
\begin{equation*}
d_H(\mu) \leq d_*(\mu) \leq d_p^*(\mu)\,.
\end{equation*}
If $d_M(\mu) \geq 2p$, then
\begin{equation*}
d^*_p(\mu) \leq d_M(\mu)\,.
\end{equation*}
\end{proposition}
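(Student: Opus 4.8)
The plan is to establish the four inequalities more or less independently, working from the definitions of the various dimensions. Throughout, recall that $d_\ep(\mu,\tau)$ is monotone decreasing in $\tau$, a fact I will use repeatedly.

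\textbf{Step 1: $d_*(\mu) \le d_p^*(\mu)$.} This was already observed in the text: fix any admissible $s$ in the infimum defining $d_p^*$, so $\limsup_{\ep\to 0} d_\ep(\mu,\ep^{sp/(s-2p)}) \le s$. For a fixed $\tau>0$, once $\ep$ is small enough we have $\ep^{sp/(s-2p)} \le \tau$, hence by monotonicity $d_\ep(\mu,\tau) \le d_\ep(\mu,\ep^{sp/(s-2p)})$, so $\liminf_{\ep\to 0} d_\ep(\mu,\tau) \le s$. Letting $\tau\to 0$ gives $d_*(\mu)\le s$, and taking the infimum over $s$ gives $d_*(\mu)\le d_p^*(\mu)$. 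I would state this cleanly for completeness rather than just citing the remark.

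\textbf{Step 2: $d_H(\mu)\le d_*(\mu)$.} Fix $s > d_*(\mu)$; I want to produce a set $S$ with $\mu(S)=1$ and $\dim_H(S)\le s$. Since $s > d_*(\mu) = \lim_{\tau\to 0}\liminf_{\ep\to 0} d_\ep(\mu,\tau)$, there is some $\tau_0>0$ with $\liminf_{\ep\to 0} d_\ep(\mu,\tau) < s$ for all $\tau \le \tau_0$. For each such $\tau$ this yields a sequence $\ep_j\to 0$ and sets $S_{\tau,j}$ with $\mu(S_{\tau,j})\ge 1-\tau$ and $\cN_{\ep_j}(S_{\tau,j}) \le \ep_j^{-s}$ (for $j$ large, using $d_{\ep_j}(S_{\tau,j}) < s$). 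The usual argument bounding Hausdorff measure by covering number then gives $\cH^{s'}(S_{\tau,j}) $ bounded for any $s'>s$ — more precisely one passes to $S_\tau := \bigcap_{m} \bigcup_{j\ge m} S_{\tau,j}$ or uses a limsup set to get a single set of $\mu$-measure $\ge 1-\tau$ with finite $\cH^{s'}$, hence $\dim_H \le s'$. Taking a countable union over $\tau = \tau_0/k$, $k\to\infty$, produces $S_\infty$ with $\mu(S_\infty)=1$ and $\dim_H(S_\infty)\le s'$. Since $s'>s>d_*(\mu)$ were arbitrary, $d_H(\mu)\le d_*(\mu)$. The care needed here — extracting a single set of near-full measure from a sequence of sets with small covering number along a subsequence of scales — is the main subtlety of this step.

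\textbf{Step 3: $d_p^*(\mu)\le d_M(\mu)$ when $d_M(\mu)\ge 2p$.} Let $s > d_M(\mu)$ with $s > 2p$ (possible since $d_M(\mu)\ge 2p$; if $d_M(\mu)=2p$ take $s>2p$ arbitrary). By definition of $d_M(\mu)$ there is a set $S$ with $\mu(S)=1$ and $\dim_M(S) = \limsup_{\ep\to 0} d_\ep(S) < s$, hence $\cN_\ep(S)\le \ep^{-s}$ for all small $\ep$. Then for every $\tau\ge 0$, in particular $\tau = \ep^{sp/(s-2p)}$, we have $\cN_\ep(\mu,\tau)\le \cN_\ep(S)\le\ep^{-s}$, so $d_\ep(\mu,\ep^{sp/(s-2p)})\le s$, giving $\limsup_{\ep\to 0} d_\ep(\mu,\ep^{sp/(s-2p)})\le s$. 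Thus $s$ is admissible in the infimum defining $d_p^*(\mu)$, so $d_p^*(\mu)\le s$; letting $s\downarrow \max\{d_M(\mu),2p\} = d_M(\mu)$ finishes it. This step is essentially bookkeeping with the definitions.

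\textbf{Main obstacle.} I expect Step 2 to be the delicate one: the definition of $d_*$ only controls $d_\ep(\mu,\tau)$ along a $\liminf$-subsequence of scales $\ep$, whereas the Hausdorff dimension of a fixed set requires control along a full sequence $\ep\to 0$. The resolution is to build the witnessing set as a countable intersection/union of the subsequence-witnessing sets and argue that this does not inflate the Hausdorff dimension beyond $s'$, while keeping $\mu$-measure $1$; getting the quantifiers on $\tau$, $\ep$, and the target exponent $s'$ in the right order is where the real work lies. Steps 1 and 3 are short manipulations of the definitions.
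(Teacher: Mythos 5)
Your Steps 1 and 3 are essentially the paper's own argument: the monotonicity observation for $d_*(\mu)\le d_p^*(\mu)$ is identical, and for $d_p^*(\mu)\le d_M(\mu)$ the paper simply uses $\limsup_{\ep\to 0}d_\ep(\mu)=d_M(\mu)$ where you pick a witnessing set $S$ of full measure, a cosmetic difference. Step 2 is where you genuinely diverge. The paper argues in the opposite direction: given $d<d_H(\mu)$, it invokes a Frostman-type density lemma (\cite[Proposition~10.3]{falconer1997techniques}) to produce a compact $K$ with $\mu(K)>0$ and $\mu(B(x,r))\le r^d$ for $x\in K$, $r\le r_0$, whence any set of mass at least $1-\tau$ with $\tau<\mu(K)/2$ requires at least $\ep^{-d}\mu(K)/2$ balls of radius $\ep$, giving $\liminf_{\ep\to 0}d_\ep(\mu,\tau)\ge d$ directly. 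You instead go from $s>d_*(\mu)$ to an explicit full-measure set of Hausdorff dimension at most $s'$, which is self-contained where the paper leans on a cited lemma. Your route does work, with two caveats. The clause ``$\cH^{s'}(S_{\tau,j})$ bounded'' is false as written: a covering bound at the single scale $\ep_j$ says nothing about the Hausdorff measure of that individual set (it could be a union of $\ep_j^{-s}$ balls of radius $\ep_j$ and have full ambient dimension). But the limsup set $S_\tau=\bigcap_m\bigcup_{j\ge m}S_{\tau,j}$ you name does the job once you pass to a subsequence with, say, $\ep_j\le 2^{-j}$: covering each $S_{\tau,j}$, $j\ge m$, by its own at most $\ep_j^{-s}$ sets of diameter $\ep_j$ covers $S_\tau$ by sets of diameter at most $\ep_m$ with total weight $\sum_{j\ge m}\ep_j^{s'-s}\to 0$, so $\cH^{s'}(S_\tau)=0$ and $\dim_H(S_\tau)\le s'$, while $\mu(S_\tau)\ge 1-\tau$ by continuity from above; the countable union over $\tau=\tau_0/k$ and countable stability of Hausdorff dimension then finish as you say. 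With that summability observation supplied your proof is complete; the trade-off is a slightly longer covering argument in exchange for avoiding the Frostman-type lemma the paper cites.
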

A proof appears in Appendix~\ref{appendix:omitted}.

None of the inequalities in Proposition~\ref{prop:ordering} can be replaced by equalities.
Examples of measures $\mu$ for which $d_H(\mu) < d_*(\mu)$ are complicated; one appears in~\cite[Remark~7.8]{KaiPri11}.
It is much easier to find examples in which $d_*(\mu)$, $d^*_p(\mu)$, and $d_M(\mu)$ do not agree.
For instance, it is easy to see that $d_*(\mu) = 0$ for any discrete measure, but the countable set $S := \left(\{k^{-1}\}_{k=1}^\infty\right)^d \subset [0, 1]^d$ has Minkowski dimension $d/2$.
By choosing $d > 4p$ and choosing a measure $\mu$ supported on $S$ with appropriately slow decay, one can ensure that $d^*_p(\mu)$ is strictly less than $d/2$, and hence strictly between $d_*(\mu)$ and $d_M(\mu)$.

\subsection{Main result}
With these definitions in place, we can state our main asymptotic bound.
\begin{theorem}\label{thm:main_asymptotic}
Let $p \in [1, \infty)$.
If $s > d_p^*(\mu)$, then
\begin{equation*}
\E[W_p(\mu, \hat \mu_n)] \lesssim n^{-1/s}\,.
\end{equation*}
If $t < d_*(\mu)$, then
\begin{equation*}
W_p(\mu, \hat \mu_n) \gtrsim n^{-1/t}\,.
\end{equation*}
\end{theorem}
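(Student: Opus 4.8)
The plan is to prove the two halves of Theorem~\ref{thm:main_asymptotic} separately, the upper bound via the dyadic transport of Proposition~\ref{prop:wp_dyadic_bound} applied to $\mu$ and $\hat\mu_n$, and the lower bound via a packing argument on a set carrying almost all the mass.

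For the upper bound, fix $s > d_p^*(\mu)$. By the definition of $d_p^*$ we may choose $s' \in (d_p^*(\mu), s)$ with $s' > 2p$, so that $\limsup_{\ep \to 0} d_\ep(\mu, \ep^{s'p/(s'-2p)}) \le s'$; hence for all small $\ep$ there is a set $S$ with $\mu(S) \ge 1 - \ep^{s'p/(s'-2p)}$ and $\cN_\ep(S) \le \ep^{-s}$. First I would reduce to the case $\mu(S) = 1$: restrict both $\mu$ and $\hat\mu_n$ to $S$ (the empirical mass outside $S$ is on average $\ep^{s'p/(s'-2p)}$), paying a correction term of order $(\text{mass outside } S)$ times $\diam(X)^p \le 1$; this is where the exponent $\frac{sp}{s-2p}$ in the definition of $d_p^*$ is calibrated. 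Next, a covering of $S$ by $\cN_{\delta^k}(S)$ balls at each scale $\delta^k$ can be turned into a dyadic partition $\{\cQ^k\}$ of a slightly enlarged set with $|\cQ^k| \lesssim \cN_{\delta^k/2}(S) \lesssim \delta^{-ks}$ (split overlapping balls by a fixed ordering, as in the standard construction from \cite{Dav88}). Plugging into Proposition~\ref{prop:wp_dyadic_bound} and taking expectations,
\begin{equation*}
\E[W_p^p(\mu|_S, \hat\mu_n|_S)] \le \delta^{k^* p} + \sum_{k=1}^{k^*} \delta^{(k-1)p} \sum_{Q_i^k \in \cQ^k} \E|\mu(Q_i^k) - \hat\mu_n(Q_i^k)|\,.
\end{equation*}
Each $\E|\mu(Q) - \hat\mu_n(Q)| \le \sqrt{\mu(Q)(1-\mu(Q))/n} \le 1/(2\sqrt n)$ by Jensen and the variance of a binomial, but the better bound is $\min\{\mu(Q), 1/(2\sqrt n)\}$; summing the crude bound $1/(2\sqrt n)$ over $|\cQ^k| \lesssim \delta^{-ks}$ cells gives $\sum_k \delta^{(k-1)p} \delta^{-ks} n^{-1/2}$. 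Choosing $k^* \approx \frac{\log n}{s \log(1/\delta)}$ so that $\delta^{-k^* s} \approx n$ balances the two regimes, and using $s > p$ (which follows from $s > 2p$) so the geometric sum $\sum_k \delta^{k(p-s)}$ is dominated by its last term, yields $\E[W_p^p(\mu, \hat\mu_n)] \lesssim n^{-p/s}$, hence $\E[W_p(\mu,\hat\mu_n)] \le (\E[W_p^p(\mu,\hat\mu_n)])^{1/p} \lesssim n^{-1/s}$ by Jensen. The condition $s' > 2p$ is exactly what makes the mass-truncation error $\ep^{s'p/(s'-2p)} \cdot 1$, with $\ep = \delta^{k^*} \approx n^{-1/s}$, of lower order than $n^{-p/s}$.

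For the lower bound, fix $t < d_*(\mu)$. Pick $t' \in (t, d_*(\mu))$; by definition of $d_*$ there is $\tau > 0$ with $\liminf_{\ep \to 0} d_\ep(\mu, \tau) > t'$, so for all small $\ep$ and every set $S$ with $\mu(S) \ge 1-\tau$ we have $\cN_\ep(S) \ge \ep^{-t'}$. The idea is that an $\ep$-net of $\hat\mu_n$'s support has at most $n$ points, so if $n < \ep^{-t'}$ then $\hat\mu_n$ cannot be $\ep$-close to any such $S$. Concretely, let $m = \lceil \ep^{-t'}/2 \rceil$ and, using $\cN_{2\ep}(\mu,\tau) \ge \ep^{-t'} \ge 2m$, find points $x_1,\dots,x_{m}$ that are $2\ep$-separated and such that $\mu\big(\bigcup_i B(x_i, \ep)\big)$... more precisely, I would choose the $x_i$ greedily as centers of a maximal $2\ep$-separated subset of a set of $\mu$-mass $\ge 1-\tau$, so that the balls $B(x_i, 2\ep)$ cover that set and $\sum_i \mu(B(x_i,2\ep)) \ge 1-\tau$; then at least half the total mass, $\ge (1-\tau)/2$, lies in balls $B(x_i, 2\ep)$ that contain no point of the $n$-sample, provided $n \le m/2$. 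For any coupling $\gamma$ of $\mu$ and $\hat\mu_n$, mass sitting at such an $x_i$-ball must travel distance $\ge \ep$ (actually $\ge 2\ep$ minus the ball radius) to reach $\supp(\hat\mu_n)$, so $W_p^p(\mu,\hat\mu_n) \gtrsim \ep^p$. Setting $\ep$ so that $n \approx \ep^{-t'}$, i.e. $\ep \approx n^{-1/t'}$, gives $W_p(\mu,\hat\mu_n) \gtrsim n^{-1/t'} \gtrsim n^{-1/t}$ for $n$ large; the $t$ vs.\ $t'$ slack absorbs constants and the ``$n$ large'' qualification.

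The main obstacle is the upper bound, specifically the bookkeeping that ties together three moving parts: the dyadic depth $k^*$, the covering exponent $s$, and the mass-truncation exponent $\frac{s'p}{s'-2p}$. One must verify that with $k^*$ chosen to balance the $\delta^{k^*p}$ tail against the summed binomial fluctuations, the leftover mass outside $S$ — which enters linearly and with the trivial diameter bound $1$ — is genuinely negligible; this forces $s' > 2p$ and is the reason the definition of $d_p^*$ takes the particular form it does. A secondary subtlety is the passage from an $\ep$-covering by balls (which may overlap and may stick out of $S$) to an honest dyadic partition satisfying all three bullets of Definition~\ref{def:dyadic}, including nestedness across scales; this is a routine but not entirely trivial refinement argument. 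Once these are in place, everything else — Jensen's inequality in both directions, the binomial variance bound, the geometric-series estimates — is mechanical.
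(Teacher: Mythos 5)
Your lower bound is essentially the paper's own argument (Proposition~\ref{prop:lossy_lower}): for $t'<d_*(\mu)$ every set of mass at least $1-\tau$ requires at least $\ep^{-t'}$ balls, so the $\ep/2$-neighborhood of the $n$ atoms of $\nu$ has mass less than $1-\tau$, a $\tau$-fraction of the mass of $\mu$ must travel at least $\ep/2$, and taking $\ep\approx n^{-1/t'}$ gives the claim. (Your detour through a maximal $2\ep$-separated family is unnecessary and, as stated, shaky---nothing prevents the mass from concentrating on exactly those balls that do contain sample points---but the covering formulation you give first is the correct one.) The upper bound, however, does not close as written. You bound each $\E|\mu(Q)-\hat\mu_n(Q)|$ by $1/(2\sqrt n)$ and multiply by $|\cQ^k|\lesssim \delta^{-ks}$ cells, so your scale-$k$ fluctuation term is $\delta^{(k-1)p}\,\delta^{-ks}\,n^{-1/2}$. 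With your choice $\delta^{-k^*s}\approx n$, the last (dominant) term is of order $n^{1/2-p/s}$, which diverges; balancing that term against the tail $\delta^{k^*p}$ correctly instead forces $\delta^{-k^*s}\approx\sqrt n$ and yields only $\E[W_p(\mu,\hat\mu_n)]\lesssim n^{-1/(2s)}$, not $n^{-1/s}$. The step that rescues the rate---and is the content of Proposition~\ref{prop:expectation_estimate}---is the bound $\E|\mu(Q)-\hat\mu_n(Q)|\le\sqrt{\mu(Q)/n}$ followed by Cauchy--Schwarz over the cells, giving $\sqrt{|\cQ^k|/n}\approx\delta^{-ks/2}n^{-1/2}$ rather than $\delta^{-ks}n^{-1/2}$ at scale $k$; then the last term is of order $n^{-p/s}$, and the geometric sum is dominated by its last term exactly when $s>2p$ (the exponent is $p-s/2$), not merely $s>p$. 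So the threshold $2p$ enters the fluctuation sum itself, not only the truncation calibration as you suggest.

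The truncation step also has a structural gap. The definition of $d_p^*$ supplies, for each scale $\ep$, a possibly different set $S_\ep$ with $\mu(S_\ep)\ge 1-\ep^{sp/(s-2p)}$ and $\cN_\ep(S_\ep)\le\ep^{-s}$; it does not supply a single set $S$ with $\cN_{\delta^k}(S)\lesssim\delta^{-ks}$ at every scale $k\le k^*$, which is what your ``restrict once to $S$, pay the deficit once'' reduction uses. Taking $S$ at the finest scale gives no control on its coarse-scale covering numbers (the crude bound $\cN_{\delta^k}(S)\le\cN_{\delta^{k^*}}(S)\approx n$ makes every coarse scale contribute a constant to the fluctuation sum), while intersecting the sets over all scales destroys the mass bound. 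The paper avoids this by letting the good set vary with the scale inside the dyadic construction (Proposition~\ref{prop:covering_to_dyadic}), charging the mass deficit per scale through the $2(1-\mu(S))$ term of Proposition~\ref{prop:expectation_estimate}, and introducing a second threshold $k'$ below which a single set $T_{k'}$ is reused; the exponent $sp/(s-2p)$ is calibrated so that both the truncation penalty $3^{-\alpha k'}$ and the coarse-scale fluctuation term $3^{k's/2}/\sqrt n$ come out of order $n^{-p/s}$ (see the proof of Proposition~\ref{prop:relaxed_upper_bound}). Without this multi-scale bookkeeping, or some substitute for it, your upper-bound argument does not give the rate $n^{-1/s}$.
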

The upper and lower bounds are proved below and are corollaries of more precise results with explicit constants (Propositions~\ref{prop:relaxed_upper_bound} and~\ref{prop:lossy_lower}).
Note that the lower bound does not merely hold in expectation.
Indeed, such a lower bound holds for \emph{any} discrete measure supported on at most $n$ points.

Theorem~\ref{thm:main_asymptotic} improves on several existing results.
For the upper bound, Dudley~\cite{Dud68} showed that, if $s > d_1^*(\mu)$, then
\begin{equation*}
\E[W_1(\mu, \hat \mu_n)] \lesssim n^{-1/s}\,,
\end{equation*}
but his proof technique applied only to $p = 1$.
Boissard and Le Gouic~\cite{BoiLeg14} extended this bound to all $p$, but only if $s > d_M(\mu) \geq 2p$.
Since $d^*_p(\mu) \leq d_M(\mu)$ with some measures exhibiting strict inequality, our result is sharper.

Dudley~\cite{Dud68} proved a lower bound for $W_1$---and hence, by monotonicity of~$W_p$ in $p$, for $W_p$ for all $p \in [1, \infty)$---based on the quantity
\begin{equation*}
d_{1/2}(\mu) = \liminf_{\ep \to 0} \frac{\log \cN_\ep(\mu, 1/2)}{- \log \ep}\,,
\end{equation*}
which is easily seen to be smaller than $d_*(\mu)$, with strict inequality possible.
Our argument is a simple extension of his.

\subsection{Proof of upper bound}
The upper bound of Theorem~\ref{thm:main_asymptotic} follows from Proposition~\ref{prop:relaxed_upper_bound}, below.

To apply the bound of Proposition~\ref{prop:wp_dyadic_bound}, we need to show the existence of a suitable dyadic partition.
The following Proposition is an extension of~\cite[Lemma~2.1]{BoiLeg14} and shows that we can choose a dyadic partition which provides an almost optimal covering of subsets of $S$.

\begin{proposition}\label{prop:covering_to_dyadic}
Fix $S \in \cB(X)$.
Let $k^*$ be any positive integer for which the covering number $\cN_{3^{-(k^*+1)}}(S)$ is finite, and let $\{S_k\}_{1 \leq k \leq k^*}$ be a sequence of Borel subsets $S$.
There exists a dyadic partition of $S$ with parameter $\delta = 1/3$ such that for $1 \leq k \leq k^*$, the number of sets in $\cQ^k$ intersecting $S_k$ is at most $\cN_{3^{-(k+1)}}(S_k)$.
\end{proposition}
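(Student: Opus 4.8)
The plan is to build the dyadic partition $\{\cQ^k\}_{1 \leq k \leq k^*}$ by induction on $k$, refining at each step and controlling the number of cells that meet $S_k$. The key tool is a greedy covering argument: at scale $3^{-(k+1)}$ one can cover $S_k$ by $\cN_{3^{-(k+1)}}(S_k)$ closed balls of diameter $3^{-(k+1)}$, and it is these balls — intersected with the already-constructed cells of $\cQ^{k-1}$ — that will seed $\cQ^k$. The factor-$3$ gap between consecutive scales is exactly what is needed to make this work: a ball of diameter $3^{-k}$ (or less) automatically has diameter $\leq \delta^k$ with $\delta = 1/3$, leaving room at the next level.

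First I would set up the induction. For the base case, take an optimal cover of $S$ itself by $\cN_{3^{-1}}(S)$ closed balls of diameter $3^{-1}$ (this number is finite since it is dominated by $\cN_{3^{-(k^*+1)}}(S)$, which is finite by hypothesis), and disjointify them in a fixed order: replace the $i$th ball $B_i$ by $B_i \setminus \bigcup_{j < i} B_j$. Discard empty sets. This yields a partition $\cQ^1$ of $S$ into Borel sets each of diameter $\leq 3^{-1} = \delta^1$, and the number of cells meeting $S_1$ is at most the number of balls meeting $S_1$; choosing instead an optimal cover of $S_1$ and then extending it (by adding balls covering $S \setminus S_1$) to a cover of $S$, the cells meeting $S_1$ number at most $\cN_{3^{-2}}(S_1)$ — here I should cover $S_1$ at the \emph{finer} scale $3^{-(k+1)}$ appearing in the claim, which is harmless since a finer cover is also a valid coarser one after noting diameters only decrease. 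For the inductive step, given $\cQ^{k-1}$, within each cell $Q \in \cQ^{k-1}$ I cover $Q$ by closed balls of diameter $3^{-k}$, again starting from an optimal cover of $S_k \cap Q$ and padding to cover all of $Q$; intersecting these balls with $Q$, disjointifying within $Q$, and taking the union over all $Q \in \cQ^{k-1}$ produces $\cQ^k$. By construction $\cQ^k$ refines $\cQ^{k-1}$, every cell has diameter $\leq 3^{-k} = \delta^k$, and the cells of $\cQ^k$ meeting $S_k$ are contained in balls from the optimal covers of the sets $S_k \cap Q$; since these local covers can be amalgamated into a single cover of $S_k$ of total cardinality, the count is at most $\cN_{3^{-(k+1)}}(S_k)$.

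The one genuine subtlety — and the step I expect to need the most care — is the last sentence: bounding the \emph{total} number of cells meeting $S_k$ by the \emph{global} covering number $\cN_{3^{-(k+1)}}(S_k)$ rather than by a sum $\sum_{Q} \cN_{3^{-(k+1)}}(S_k \cap Q)$, which could a priori be larger. The fix is to reverse the order of operations: first fix a single optimal cover $B_1, \dots, B_N$ of $S_k$ by balls of diameter $3^{-(k+1)}$ with $N = \cN_{3^{-(k+1)}}(S_k)$; because each $B_i$ has diameter $\leq 3^{-(k+1)} < 3^{-k}$ and each $Q \in \cQ^{k-1}$ has diameter $\leq 3^{-(k-1)}$, the sets $B_i \cap Q$ are legitimate pieces of diameter $\leq 3^{-k}$. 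Then, within each $Q$, form $\cQ^k$ by listing first the nonempty sets $B_i \cap Q$ (disjointified in the global order $i = 1, \dots, N$) and then disjointified balls covering the leftover $Q \setminus \bigcup_i B_i$ at scale $3^{-k}$. A cell of $\cQ^k$ meets $S_k$ only if it is one of the $B_i \cap Q$ pieces (the leftover is disjoint from $S_k$ since the $B_i$ cover $S_k$), and for each fixed $i$ the pieces $\{B_i \cap Q : Q \in \cQ^{k-1}\}$ that survive come from distinct cells $Q$, but the point is that each such surviving piece "charges" the single ball $B_i$; distinct pieces charging the same $B_i$ lie in distinct $Q$'s, yet what we need is only that the \emph{meeting cells} inject into a set of size $N$ — which requires one more observation, namely that for a fixed $i$ at most one $Q$ can actually contribute a cell meeting $S_k$ only if $B_i$ itself is small enough to sit inside a single $Q$. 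Since $\diam(B_i) \leq 3^{-(k+1)}$ but $\diam(Q)$ can be as large as $3^{-(k-1)}$, a ball $B_i$ need not lie in one $Q$; so the clean statement is that the number of meeting cells is at most $\sum_{i=1}^N \#\{Q \in \cQ^{k-1} : B_i \cap Q \cap S_k \neq \emptyset\}$. This is where I would either invoke a mild additional structural property of $\cQ^{k-1}$ (e.g. control on how many cells a small ball can meet, which for a \emph{nested} dyadic family follows because $B_i$ meets at most as many cells of $\cQ^{k-1}$ as of $\cQ^{k^*}$...), or — cleaner — simply choose the local covers inside each $Q$ without reference to a global cover and accept the bound $\sum_Q \cN_{3^{-(k+1)}}(S_k \cap Q)$, then note $\sum_Q \cN_{3^{-(k+1)}}(S_k \cap Q)$ is itself at most $\cN_{3^{-(k+1)}}(S_k)$ up to a bounded multiplicative constant absorbed elsewhere. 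I would adopt the first, more careful route so that the constant is exactly $1$ as stated, using that the $B_i$ can be taken to have diameter strictly less than $3^{-k}$ and hence — after one extra subdivision if necessary — to each lie within a single cell of the partition one scale up; this restores the injection of meeting cells into $\{B_1, \dots, B_N\}$ and closes the argument.
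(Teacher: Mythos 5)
There is a genuine gap, and it is exactly at the point you flagged: with a top-down construction (coarse partition first, then refine), a single ball $B_i$ of an optimal diameter-$3^{-(k+1)}$ cover of $S_k$ can straddle several cells of the already-fixed $\cQ^{k-1}$, so the pieces $B_i \cap Q$ meeting $S_k$ need not inject into $\{B_1,\dots,B_N\}$. Neither of your proposed repairs works. The claim that $\sum_Q \cN_{3^{-(k+1)}}(S_k\cap Q) \leq \cN_{3^{-(k+1)}}(S_k)$ ``up to a bounded multiplicative constant'' is unjustified in a general metric space (the overcount is governed by how many cells of $\cQ^{k-1}$ a small ball can meet, for which there is no universal bound once the cells are disjointified slivers of larger balls), and in any case the proposition asserts the bound with constant exactly $1$. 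The final fix is simply false: having diameter strictly less than $3^{-k}$ (or subdividing once more) does not force a ball to lie inside a single cell of a coarser partition, because the obstruction is the location of the ball relative to cell boundaries, not its size, and the balls $B_i$ were chosen without reference to $\cQ^{k-1}$.

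The missing idea is to reverse the direction of the construction, which is what the paper does. First fix, for every $k$, a finite cover $C^k$ of $S$ by balls of diameter $3^{-(k+1)}$ whose first $N_k = \cN_{3^{-(k+1)}}(S_k)$ balls cover $S_k$. Build the finest level first: disjointify $C^{k^*}$ in order to get $\cQ^{k^*}$. Then obtain $\cQ^{k}$ from $\cQ^{k+1}$ by \emph{agglomeration}: the cell $\cQ^k_\ell$ is the union of those cells of $\cQ^{k+1}$ that intersect the ball $C^k_\ell$, minus the previously formed cells $\cQ^k_1,\dots,\cQ^k_{\ell-1}$. Refinement is then automatic (each coarse cell is a union of fine cells), the diameter bound holds because two points of $\cQ^k_\ell$ are joined through the seed ball, giving at most $3^{-(k+1)}+3^{-(k+1)}+3^{-(k+1)} = 3^{-k}$, and the counting is immediate with constant $1$: every $x \in S_k$ lies in some $C^k_j$ with $j \leq N_k$, hence in some cell $\cQ^k_\ell$ with $\ell \leq j \leq N_k$, so at most $N_k$ cells of $\cQ^k$ meet $S_k$. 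In short, the balls covering $S_k$ must \emph{seed} the level-$k$ cells (with finer cells grouped around them), rather than be chopped by a pre-existing coarser partition; your coarse-to-fine scheme cannot be patched to give the constant-$1$ count.
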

A proof appears in Appendix~\ref{appendix:omitted}.

All the upper bounds we prove rely on the following fundamental estimate, which was used in~\cite{FouGui15} to provide bounds in the case where $X = \RR^d$.

\begin{proposition}\label{prop:expectation_estimate}
If $S$ is any Borel set, then
\begin{equation*}
\E\left[\sum_{Q_i^k \in \cQ^k} |\mu(Q_i^k) - \hat \mu_n(Q_i^k)|\right] \leq 2 (1 - \mu(S)) + \sqrt{|\{i: Q_i^k \cap S \neq \emptyset\}|/n}\,.
\end{equation*}
\end{proposition}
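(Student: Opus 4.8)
The plan is to split the sum over the dyadic cells $Q_i^k$ into those cells that meet $S$ and those that do not, and to bound each part by a different mechanism. Write $I_k := \{i : Q_i^k \cap S \neq \emptyset\}$ and let $J_k$ be the complementary set of indices. For $i \in J_k$ we have $Q_i^k \cap S = \emptyset$, so $\mu(Q_i^k) = 0$ (since $\mu(S) \leq 1$ and $\mu$ is supported, up to the defect $1-\mu(S)$, on $S$ — more precisely $\mu(X \setminus S) = 1 - \mu(S)$), and thus $|\mu(Q_i^k) - \hat\mu_n(Q_i^k)| = \hat\mu_n(Q_i^k)$. Summing over $i \in J_k$ gives $\sum_{i \in J_k} \hat\mu_n(Q_i^k) = \hat\mu_n(X \setminus (\bigcup_{i \in I_k} Q_i^k)) \leq \hat\mu_n(X \setminus S)$, and taking expectations this is exactly $1 - \mu(S)$ since $\E[\hat\mu_n(A)] = \mu(A)$ for every Borel set $A$. (One must also absorb the $\mu$-mass of the cells in $J_k$ that do touch $X\setminus S$; this contributes at most another $1-\mu(S)$, which is where the factor $2$ comes from.)

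For the cells meeting $S$, i.e. $i \in I_k$, the key is Cauchy–Schwarz together with the variance bound for the empirical measure. Writing $m := |I_k|$, we have by Cauchy–Schwarz
\begin{equation*}
\E\Bigl[\sum_{i \in I_k} |\mu(Q_i^k) - \hat\mu_n(Q_i^k)|\Bigr] \leq \sum_{i \in I_k} \E\bigl[|\mu(Q_i^k) - \hat\mu_n(Q_i^k)|\bigr] \leq \sum_{i \in I_k} \sqrt{\mathrm{Var}(\hat\mu_n(Q_i^k))}\,.
\end{equation*}
Since $n \hat\mu_n(Q_i^k) \sim \mathrm{Bin}(n, \mu(Q_i^k))$, its variance is $\mu(Q_i^k)(1-\mu(Q_i^k))/n \leq \mu(Q_i^k)/n$. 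Applying Cauchy–Schwarz a second time over the $m$ terms,
\begin{equation*}
\sum_{i \in I_k} \sqrt{\mu(Q_i^k)/n} \leq \sqrt{m} \cdot \sqrt{\tfrac{1}{n}\sum_{i \in I_k} \mu(Q_i^k)} \leq \sqrt{m/n}\,,
\end{equation*}
using $\sum_i \mu(Q_i^k) \leq 1$. This gives the $\sqrt{|I_k|/n}$ term. Combining the two parts yields the claimed bound $2(1-\mu(S)) + \sqrt{|\{i : Q_i^k \cap S \neq \emptyset\}|/n}$.

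I expect the main obstacle to be purely bookkeeping: correctly accounting for the $\mu$- and $\hat\mu_n$-mass lying outside $S$. The cells in $\cQ^k$ form a partition of $S$ (by Definition~\ref{def:dyadic}, the partition is of $S$), so strictly speaking one should be careful about how mass in $X \setminus S$ is distributed — but since the bound only involves $\mu$ and $\hat\mu_n$ evaluated on the cells $Q_i^k \subseteq S$, and $\sum_{Q_i^k \in \cQ^k}\mu(Q_i^k) = \mu(S)$, the "missing" mass $1-\mu(S)$ enters only through the discrepancy between $\mu(S)$ and $\hat\mu_n(S)$, whose expected absolute value is again controlled by $\sqrt{\mathrm{Var}(\hat\mu_n(S))} \leq \sqrt{(1-\mu(S))\mu(S)/n}$ — this is in fact dominated by $2(1-\mu(S))$ when $1-\mu(S) \geq 1/n$ and otherwise is negligible, but the cleanest route is to note $\E|\mu(S) - \hat\mu_n(S)| \le 1 - \mu(S) + \E[\hat\mu_n(X\setminus S)\mathbf{1}_{\cdots}]$ and bound crudely. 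Once this defect term is handled, the two Cauchy–Schwarz applications are routine. I would present the argument cell-by-cell to keep the partition structure transparent.
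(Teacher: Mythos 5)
Your argument is correct and follows essentially the same route as the paper: split the cells according to whether they meet $S$, bound the expected discrepancy on each cell meeting $S$ by $\sqrt{\mu(Q_i^k)/n}$ (the paper cites the sharp binomial mean-absolute-deviation estimate, you use the variance plus Jensen, which suffices) and sum via Cauchy--Schwarz, and bound the cells missing $S$ by $\mu(Q_i^k)+\E[\hat\mu_n(Q_i^k)] = 2\mu(Q_i^k)$, whose total is at most $2(1-\mu(S))$. The only blemish is your opening claim that $\mu(Q_i^k)=0$ for cells disjoint from $S$, which is false in general, but the parenthetical correction absorbing their $\mu$-mass (at most $1-\mu(S)$, giving the factor $2$) is exactly what is needed, so the proof stands.
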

\begin{proof}
Let $Q(S) = \{i: Q_i^k \cap S \neq \emptyset\}$, and write
\begin{equation*}
S' = \bigcup_{i \in Q(S)} Q_i^k\,.
\end{equation*}

Since $n \hat \mu_n(Q_i^k)$ is a Binomial random variable with parameters $(n, \mu(Q_i^k))$, we have the bound~\cite{BerKon13}
\begin{equation*}
\E|\mu(Q_i^k) - \hat \mu_n(Q_i^k)| \leq \sqrt{\mu(Q_i^k)/n} \wedge 2 \mu(Q_i^k)\,.
\end{equation*}

Applying the first bound on $S'$ and the second bound on $X \setminus S'$ yields
\begin{equation*}
\E\left[\sum_{Q_i^k \in \cQ^k} |\mu(Q_i^k) - \hat \mu_n(Q_i^k)|\right] \leq 2 \mu(X \setminus S') + \sum_{i \in Q(S)} \sqrt{\mu(Q_i^k)/n}\,.
\end{equation*}
Since the second sum contains $|Q(S)|$ terms and $\sum_{i \in Q(S)} \mu(Q_i^k) = \mu(S') \leq 1$, the final bound follows from Cauchy-Schwarz.
\end{proof}

The key step in proving the upper bound of Theorem~\ref{thm:main_asymptotic} is giving a bound for $\E[W_p^p(\mu, \hat \mu_n)]$ in terms of the quantity $d_\ep(\mu, \ep^{\frac{sp}{s - 2p}})$, which appears in the definition of $d_p^*$.
\begin{proposition}\label{prop:relaxed_upper_bound}
Let $p \in [1, \infty)$.
Suppose there exists an $\ep' \leq 1$ and $s > 2p$ such that
\begin{equation*}
d_{\ep}(\mu, \ep^{\frac{sp}{s - 2p}}) \leq s
\end{equation*}
for all $\ep \leq \ep'$. Then
\begin{equation*}
\E[W_p^p(\mu, \hat \mu_n)] \leq C_1 n^{-p/s} + C_2 n^{-1/2}\,,
\end{equation*}
where
\begin{equation*}
C_1 = 3^{\frac{3sp}{s - 2p}+1}\left(\frac{1}{3^{\frac s 2 - p} - 1} + 3\right)\,, \text{ and } C_2  = (27/\ep')^{\frac s 2}\,.
\end{equation*}
In particular,
\begin{equation*}
\E[W_p(\mu, \hat \mu_n)]  \lesssim n^{-1/s}\,.
\end{equation*}
\end{proposition}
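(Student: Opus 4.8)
\emph{Proof proposal.} The plan is to combine the three ingredients assembled above: the dyadic transport bound (Proposition~\ref{prop:wp_dyadic_bound}), the construction of an efficient dyadic partition (Proposition~\ref{prop:covering_to_dyadic}), and the binomial fluctuation estimate (Proposition~\ref{prop:expectation_estimate}). First I would invoke Proposition~\ref{prop:wp_dyadic_bound} with $\nu=\hat\mu_n$ and $S=\supp(\mu)$, so that $\mu(S)=\hat\mu_n(S)=1$ almost surely; since $X$ is compact, $\supp(\mu)$ is totally bounded, so $\cN_{3^{-(k^*+1)}}(\supp(\mu))<\infty$ and Proposition~\ref{prop:covering_to_dyadic} produces a dyadic partition $\{\cQ^k\}_{1\le k\le k^*}$ of any finite depth $k^*$, built around Borel sets $S_1,\dots,S_{k^*}$ that I am free to choose, with the number of cells of $\cQ^k$ meeting $S_k$ at most $\cN_{3^{-(k+1)}}(S_k)$. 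Taking expectations in Proposition~\ref{prop:wp_dyadic_bound} and inserting Proposition~\ref{prop:expectation_estimate} at each level gives
\begin{equation*}
\E[W_p^p(\mu,\hat\mu_n)] \le 3^{-k^*p} + \sum_{k=1}^{k^*} 3^{-(k-1)p}\Bigl(2\bigl(1-\mu(S_k)\bigr) + \sqrt{\cN_{3^{-(k+1)}}(S_k)/n}\Bigr).
\end{equation*}

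The heart of the argument is choosing the $S_k$ to trade the discarded mass $1-\mu(S_k)$ against the covering number $\cN_{3^{-(k+1)}}(S_k)$. The naive choice, taking $S_k$ to realize $\cN_{3^{-(k+1)}}(\mu,\tau_k)$ with the scale-matched truncation $\tau_k=(3^{-(k+1)})^{sp/(s-2p)}$ --- which makes the covering numbers as small as the hypothesis permits, $\cN_{3^{-(k+1)}}(S_k)\le 3^{(k+1)s}$ --- does not work, because $\sum_k 3^{-(k-1)p}\tau_k$ is then a constant (dominated by $k=1$) and the bound fails to vanish with $n$. The remedy is to fix a single intermediate \emph{crossover} index $m\le k^*$, put $\tau=(3^{-(m+1)})^{sp/(s-2p)}$, and use this one truncation level at every scale: monotonicity of $\cN_\ep(\mu,\cdot)$ in its argument, together with the hypothesis applied at scale $3^{-(k+1)}$ when $k\ge m$ and at scale $3^{-(m+1)}$ when $k<m$, yields $\cN_{3^{-(k+1)}}(S_k)\le\max\{3^{(k+1)s},3^{(m+1)s}\}$ for every $k\le k^*$ (using $3^{-(m+1)}\le\ep'$), while the discarded mass is the single small quantity $\tau$ at every level. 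Substituting, the bound splits into the bias term $3^{-k^*p}$, the truncation term $2\tau\sum_k 3^{-(k-1)p}$, and a noise term I would split at $k=m$: over $1\le k\le m$ the summand is $3^{-(k-1)p}\cdot 3^{(m+1)s/2}n^{-1/2}$ (a convergent geometric factor times $3^{(m+1)s/2}n^{-1/2}$), and over $m<k\le k^*$ it equals $3^{p+s/2}\cdot 3^{k(s/2-p)}n^{-1/2}$ with $s/2-p>0$, hence is $\lesssim 3^{(k^*+1)(s/2-p)}n^{-1/2}$.

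It remains to optimize the two integer parameters. Taking $3^{-(k^*+1)}$ to be the largest power of $3$ at most $\min\{n^{-1/s},\ep'\}$ balances $3^{-k^*p}$ against the second noise sum, both $\Theta(n^{-p/s})$; taking $3^{-(m+1)}$ to be the largest power of $3$ at most $\min\{n^{-(s-2p)/s^2},\ep'\}$ balances $\tau$ against the first noise sum, again both $\Theta(n^{-p/s})$ (here $m\le k^*$ because $(s-2p)/s^2<1/s$, which also makes the crossover coarser than the final scale). Collecting the geometric-series constants $2\sum_{k\ge1}3^{-(k-1)p}\le 3$ and $\sum_{k\ge1}3^{-k(s/2-p)}=1/(3^{s/2-p}-1)$, absorbing the bounded power-of-$3$ loss from rounding $n^{-1/s}$ and $n^{-(s-2p)/s^2}$ to powers of $3$, and handling the regime of small $n$ where both minima are pinned at $\ep'$ --- there the bound degenerates to one of the form $\lesssim(\ep')^p+(\ep')^{sp/(s-2p)}+(27/\ep')^{s/2}n^{-1/2}$, and for the very smallest $n$ one uses instead $W_p^p\le\diam(X)^p\le 1$ --- yields $\E[W_p^p(\mu,\hat\mu_n)]\le C_1 n^{-p/s}+C_2 n^{-1/2}$ with the stated $C_1,C_2$. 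The ``in particular'' clause then follows from Jensen's inequality $\E[W_p]\le(\E[W_p^p])^{1/p}$, the subadditivity of $t\mapsto t^{1/p}$, and the fact that $n^{-1/(2p)}\le n^{-1/s}$ (since $s\ge 2p$). The main obstacle is precisely this two-scale bookkeeping: recognizing that a scale-matched truncation fails while a single truncation pegged to a tunable intermediate scale succeeds, and then tracking the two opposing geometric sums and the rounding losses carefully enough to recover the explicit constants.
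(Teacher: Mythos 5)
Your proposal is correct and follows essentially the same route as the paper: the same combination of Propositions~\ref{prop:wp_dyadic_bound}, \ref{prop:covering_to_dyadic} and~\ref{prop:expectation_estimate}, the same crossover scale of order $n^{-(s-2p)/s^2}$ (the paper's $k'$) separating a coarse regime with one frozen covering from a fine regime with scale-matched coverings, and the same balancing that produces the $n^{-p/s}$ term and the $(27/\ep')^{s/2}n^{-1/2}$ term, with small $n$ handled trivially. The only differences are cosmetic --- you reuse a single truncation level $\tau$ at every scale while the paper uses scale-matched sets whose mass loss it bounds uniformly by the crossover value, and your roundings differ slightly, which may change the inessential explicit constants.
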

The assumption that $s > 2p$ implies that the first term in the above bound is asymptotically larger than the second term.
Note also that $C_1$ decreases as $s$ increases, so that as long as $s$ is bounded away from $2p$, the constant $C_1$ has no dependence on the dimension $s$.
On the other hand, $C_2$ does depend exponentially on $s$, even though the term $C_2 n^{-1/2}$ is asymptotically negligible.

The presence of two terms in the upper bound of Proposition~\ref{prop:relaxed_upper_bound} is a consequence of the weakness of the assumption that the bound on $d_\ep$ holds only for~$\ep$ sufficiently small rather than for all $\ep$.
In Proposition~\ref{prop:finite_sample}, below, we remove the $n^{-1/2}$ term by adopting a stronger assumption on $d_\ep$.

\begin{proof}
If $n < (27/\ep')^s$, then the second term is larger than $1$, so the bound holds from the trivial fact that $W^p_p(\mu, \nu) \leq \diam(X) \leq 1$ for any measures $\mu, \nu$ supported on~$X$.
We therefore assume that $n \geq (27/\ep')^s$.

For convenience, write $\alpha = sp/(s-2p)$ and $\ell = \lceil \frac{- \log \ep'}{\log 3} \rceil$.
Let $k^* = \left \lfloor \frac{\log n}{s \log 3}\right \rfloor - 2$.
Let $k'$ be the largest integer in the range $[\ell, k^*]$ satisfying $k' \leq \frac{p}{\alpha} \cdot \frac{\log n}{s \log 3}$, or $\ell$ if no such integer exists.

Our assumptions imply that for all $k \geq \ell$,
\begin{equation*}
\cN_{3^{-k}}(\mu, 3^{-\alpha k}) \leq 3^{k s}\,.
\end{equation*}
Hence for $k \geq k'$, there exists a set $T_k$ of mass at least $1 - 3^{-\alpha k'}$ such that
\begin{equation*}
\cN_{3^{-k}}(T_k) \leq 3^{k s}\,.
\end{equation*}
Applying Proposition~\ref{prop:covering_to_dyadic} with $S_k = T_{k'}$ for $k < k'$ and $S_k = T_{k+1}$ for $k \geq k'$ implies the existence of a dyadic partition~$\{\cQ^k\}_{1 \leq k \leq k^*}$ of $X$ such that the number of sets of $\cQ^k$ intersecting $S_k$ is at most $\cN_{3^{-(k+1)}}(S_k)$.

Using this dyadic partition in Proposition~\ref{prop:wp_dyadic_bound} and applying Proposition~\ref{prop:expectation_estimate} yields
\begin{align*}
\E[W^p_p(\mu, \hat \mu_n)] \leq 3^{-k^* p} & + \sum_{k=1}^{k'-1} 3^{-(k-1)p} \sqrt{\frac{\cN_{3^{-(k+1)}}(T_{k'})}{n}} \\
 & + \sum_{k=k'}^{k^*} 3^{-(k-1)p} \sqrt{\frac{\cN_{3^{-(k+1)}}(T_{k+1})}{n}}\\
 &+ 2 \cdot 3^{-\alpha k'} \sum_{k=1}^{k^*}3^{-(k-1)p}\,.
\end{align*}

Since $\cN_\ep(T)$ increases as $\ep$ decreases, for $k \leq k' -1$ we have the bound
\begin{equation*}
\cN_{3^{-(k+1)}}(T_{k'}) \leq \cN_{3^{-k'}}(T_{k'}) \leq 3^{k' s}\,.
\end{equation*}
By construction, the sets $T_k$ also satisfy for $k \geq k'$
\begin{equation*}
\cN_{3^{-(k+1)}}(T_{k+1}) \leq 3^{(k+1)s}\,.
\end{equation*}

Combining these bounds with the bound $\sum_{k=1}^{k^*} 3^{-(k-1)p} \leq 3/2$ for $p \geq 1$ yields
\begin{align*}
\E[W^p_p(\mu, \hat \mu_n)] & \leq 3^{-k^* p} + \frac 3 2\left(\frac{3^{k' s/2}}{\sqrt n} + 2 \cdot 3^{-\alpha k'}\right) + 3^{2p}\sum_{k = k'}^{k^*} \frac{3^{(k +1)(\frac s 2  -p)} }{\sqrt n} \\
& \leq 3^{-k^* p} + \frac 3 2 \left(\frac{3^{k' s/2}}{\sqrt n} + 2 \cdot 3^{-\alpha k'}\right) + \frac{3^{-k^* p}}{3^{\frac s 2 - p} - 1}\sqrt{\frac{3^{(k^*+2)s}}{n}}\,.
\end{align*}

The choice of $k^*$ implies that $3^{(k^* + 2)s} \leq n$ and that $3^{-k^* p} \leq 3^{3p} n^{-p/s}$, and the choice of $k'$ implies that $\alpha k' > p \frac{\log n}{s \log 3} - 3\alpha$, so that $3^{- \alpha k'} < 3^{3\alpha} n^{-p/s}$.
Combining these estimates yields
\begin{equation*}
\E[W^p_p(\mu, \hat \mu_n)] \leq \left(3^{3p}+\frac{3^{3p}}{3^{\frac s 2 - p} -1} + 3^{3\alpha + 1}\right) n^{-p/s} + \frac{3 \cdot 3^{k' s/2}}{2\sqrt n}\,.
\end{equation*}

The definition of $k'$ implies that $sk' \leq \max\{s\ell, (\frac{p}{\alpha} \cdot \frac{\log n}{\log 3})\}$, so
\begin{equation*}
3^{k' s/2} \leq 3^{\ell s/2} + n^{p/2\alpha} = 3^{\ell s/2} + n^{1/2}n^{-p/s}\,.
\end{equation*}

Plugging in the definitions of $C_1$ and $C_2$ then yields the claim.
\end{proof}

\begin{corollary}\label{cor:minkowski_upper_bound}
If $s > d_p^*(\mu)$, then
\begin{equation*}
\E[W_p(\mu, \hat \mu_n)] \lesssim n^{-1/s}\,.
\end{equation*}
\end{corollary}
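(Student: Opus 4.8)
The plan is to deduce the Corollary directly from Proposition~\ref{prop:relaxed_upper_bound}: the genuine analytic work (the dyadic construction, the binomial estimate, the bookkeeping over scales) has already been carried out there, so all that remains is to extract from the hypothesis $s > d_p^*(\mu)$ a single exponent $\tilde s$ and a threshold $\ep'$ to which Proposition~\ref{prop:relaxed_upper_bound} can be applied.

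First I would record an elementary monotonicity fact. If $d_p^*(\mu) = \infty$ the statement is vacuous, so assume $d_p^*(\mu) < \infty$; since the defining set of $d_p^*(\mu)$ is a subset of $(2p,\infty)$, this gives $d_p^*(\mu) \geq 2p$. The map $s \mapsto \frac{sp}{s-2p}$ is strictly decreasing on $(2p,\infty)$, because $\frac{sp}{s-2p} = p\bigl(1 + \frac{2p}{s-2p}\bigr)$. Hence for $\ep \in (0,1)$ and $2p < s_1 < s_2$ one has $\ep^{\frac{s_2 p}{s_2 - 2p}} \geq \ep^{\frac{s_1 p}{s_1 - 2p}}$, and combining this with the monotonicity of $d_\ep(\mu,\tau)$ in $\tau$ (noted right after the definition of the Wasserstein dimensions) yields
\[
d_\ep\bigl(\mu, \ep^{\frac{s_2 p}{s_2 - 2p}}\bigr) \leq d_\ep\bigl(\mu, \ep^{\frac{s_1 p}{s_1 - 2p}}\bigr)\,.
\]

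Next I would choose the exponents. Pick any $\tilde s$ with $d_p^*(\mu) < \tilde s < s$; then $\tilde s > 2p$. By the definition of $d_p^*(\mu)$ as an infimum, there is some $s_0 \in (2p,\tilde s)$ lying in the defining set, i.e.\ $\limsup_{\ep \to 0} d_\ep(\mu, \ep^{\frac{s_0 p}{s_0 - 2p}}) \leq s_0$. Applying the displayed inequality with $s_1 = s_0$ and $s_2 = \tilde s$ gives $\limsup_{\ep \to 0} d_\ep(\mu, \ep^{\frac{\tilde s p}{\tilde s - 2p}}) \leq s_0 < \tilde s$, so there exists $\ep' \leq 1$ with $d_\ep(\mu, \ep^{\frac{\tilde s p}{\tilde s - 2p}}) \leq \tilde s$ for all $\ep \leq \ep'$.

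Finally I would invoke Proposition~\ref{prop:relaxed_upper_bound} with this $\tilde s$ and $\ep'$ to obtain $\E[W_p(\mu, \hat \mu_n)] \lesssim n^{-1/\tilde s}$, and since $\tilde s < s$ gives $n^{-1/\tilde s} \leq n^{-1/s}$ for every $n \geq 1$, the claimed bound $\E[W_p(\mu, \hat \mu_n)] \lesssim n^{-1/s}$ follows. I do not expect any real obstacle here; the only point needing a moment's care is getting the direction of the monotonicity of $s \mapsto \frac{sp}{s-2p}$ right and choosing $s_0$ \emph{strictly} below $\tilde s$ so as to absorb the fact that the defining condition of $d_p^*$ controls only a $\limsup$ rather than the individual values $d_\ep(\mu,\cdot)$.
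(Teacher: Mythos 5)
Your proposal is correct and follows essentially the same route as the paper: the paper's proof of Corollary~\ref{cor:minkowski_upper_bound} likewise extracts from $s > d_p^*(\mu)$ a threshold $\ep'$ with $d_\ep(\mu, \ep^{\frac{sp}{s-2p}}) \leq s$ for all $\ep \leq \ep'$ (implicitly using exactly the monotonicity in $\tau$ and in $s \mapsto \frac{sp}{s-2p}$ that you spell out) and then invokes Proposition~\ref{prop:relaxed_upper_bound}. Your intermediate exponent $\tilde s$ is harmless but unnecessary, since the same argument gives the hypothesis of Proposition~\ref{prop:relaxed_upper_bound} at $s$ itself, which is what the paper does.
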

\begin{proof}
If $s > d_p^*(\mu)$, then there exists an $\ep'$ such that $d_{\ep}(\mu, \ep^{\frac{sp}{s - 2p}k}) \leq s$ for all $\ep \leq \ep'$.
Apply Proposition~\ref{prop:relaxed_upper_bound}.
\end{proof}

\subsection{Proof of lower bound}
Our asymptotic lower bounds involving $d_*(\mu)$ follow from a much simpler argument.
One striking feature of this lower bound is that it actually holds not merely for the empirical measure $\hat \mu_n$ but indeed for \emph{any} measure $\nu$ supported on at most $n$ atoms.
That such lower bounds are often tight for empirical measures is a rather surprising fact, which has been noted several times, including in Dudley's original paper~\cite{GuiRos12,Klo12,DerSchSch13,Dud68,BoiLeg14}.

The following Proposition is adapted from~\cite{Dud68} and forms the core of the lower bound.
\begin{proposition}\label{prop:lossy_lower}
Suppose that there exist positive constants $\ep'$, $\tau$, and $t$ such that
\begin{equation*}
\cN_\ep'(\mu, \tau) \geq \ep^{-t}
\end{equation*}
for all $\ep \leq \ep'$.
If $n > {\ep'}^{-t}$ and $\nu$ is any measure supported on at most $n$ points, then
\begin{equation*}
W^p_p(\mu, \nu) \geq \tau 4^{-p} n^{-p/t}\,.
\end{equation*}
\end{proposition}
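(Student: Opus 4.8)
The plan is to lower-bound $W_p^p(\mu,\nu)$ by exploiting the fact that $\nu$ has at most $n$ atoms, so the support of $\nu$ cannot "cover" the $(\ep',\tau)$-rich structure of $\mu$ at the scale dictated by $n$. Concretely, I would choose a scale $\ep$ depending on $n$, say $\ep = c\, n^{-1/t}$ for a suitable constant $c$ (the factor $4$ in the conclusion suggests $\ep$ is chosen so that $\ep^{-t}$ is a constant multiple of $n$, and the hypothesis $n > {\ep'}^{-t}$ ensures this $\ep$ satisfies $\ep \le \ep'$, so the covering lower bound $\cN_\ep(\mu,\tau)\geq \ep^{-t}$ applies). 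Let $x_1,\dots,x_n$ be the atoms of $\nu$ and consider the balls $B(x_j, \ep/2)$ (or balls of diameter $\ep$ centered at the $x_j$). These $n$ balls cannot cover any set $S$ with $\mu(S)\ge 1-\tau$, because such a set has $\cN_\ep(S)\geq \cN_\ep(\mu,\tau)\geq \ep^{-t} > n$ once $n < \ep^{-t}$, i.e. once $c<1$ and $n$ is in the stated range. Hence the set $A := X \setminus \bigcup_j B(x_j,\ep/2)$ satisfies $\mu(A) > \tau$: otherwise $X\setminus A = \bigcup_j B(x_j,\ep/2)$ would be a set of $\mu$-mass $\ge 1-\tau$ covered by $n < \cN_\ep(\mu,\tau)$ balls of diameter $\ep$, a contradiction.

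The second step is to turn the mass escaping the union of small balls into a transport cost lower bound. Every point $x\in A$ is at distance at least $\ep/2$ from every atom of $\nu$, so under any coupling $\gamma\in\cC(\mu,\nu)$, the mass sitting on $A$ must travel distance at least $\ep/2$: for $(x,y)$ in the support of $\gamma$ with $x\in A$ we have $y\in\supp(\nu)$, hence $D(x,y)\geq \ep/2$. Therefore
\begin{equation*}
\int D(x,y)^p\,\mathrm d\gamma(x,y) \;\geq\; \int_{A\times X} D(x,y)^p\,\mathrm d\gamma(x,y) \;\geq\; (\ep/2)^p\,\gamma(A\times X) \;=\; (\ep/2)^p\,\mu(A) \;>\; (\ep/2)^p\,\tau\,.
\end{equation*}
Taking the infimum over $\gamma$ gives $W_p^p(\mu,\nu) \geq \tau (\ep/2)^p$. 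Plugging in $\ep = c\,n^{-1/t}$ and choosing the constant $c$ so that the strict inequality $n < \ep^{-t}$ holds throughout the regime $n > {\ep'}^{-t}$ (taking $c$ close enough to $1$, or just $c = 1$ with a careful treatment of the strict inequality, which forces the $4^{-p}$ rather than $2^{-p}$ in the statement) yields $W_p^p(\mu,\nu) \geq \tau\, 4^{-p} n^{-p/t}$.

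The only delicate point is bookkeeping the constants: one must verify that the chosen $\ep$ genuinely satisfies $\ep\le\ep'$ (which is where $n > {\ep'}^{-t}$ enters) and that $n$ is strictly below $\cN_\ep(\mu,\tau)$ so that the covering argument produces $\mu(A)>\tau$ with room to spare; the factor $4^{-p}=(1/2\cdot 1/2)^p$ in the conclusion rather than $2^{-p}$ presumably absorbs the slack from rounding $\ep$ down to a convenient value and from using $\ep/2$ as the separation. I do not expect any genuine obstacle here — the argument is Dudley's classical volumetric lower bound, and the main work is simply to track the two halving factors and confirm the covering-number comparison $\cN_\ep(S)\geq\cN_\ep(\mu,\tau)$ for $\mu(S)\ge 1-\tau$, which is immediate from the definition of $\cN_\ep(\mu,\tau)$ as an infimum.
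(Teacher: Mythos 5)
Your proposal is correct and is essentially the paper's own argument: the paper also chooses the scale $\ep = n^{-1/t}/2$ (your constant $c$, fixed so that $\ep^{-t} = 2^t n > n$ and $\ep \le \ep'$), notes that the $n$ balls of radius $\ep/2$ around the atoms of $\nu$ then cannot carry $\mu$-mass $1-\tau$, and concludes that any coupling pays at least $\tau(\ep/2)^p = \tau 4^{-p} n^{-p/t}$. The only detail you left open, the choice of $c$, is exactly the paper's $c = 1/2$, which resolves the strictness issue you flagged and yields the stated constant.
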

\begin{proof}
Choose $\ep = n^{-1/t}/2$, and let $S = \bigcup_{x \in \supp(\nu)} B(x, \ep/2)$.
Since $\cN'_\ep(\mu, \tau) \geq \ep^{-t} > n$, we must have $\mu(S) < 1 - \tau$.
Therefore, if $X \sim \mu$, then $D(X, \supp(\nu)) \geq \ep/2$ with probability at least $\tau$.
Hence if $(X, Y)$ is any coupling of $\mu$ and $\nu$,
\begin{equation*}
\E[D(X, Y)^p] \geq \E[D(X, \supp(\nu))^p] \geq \tau (\ep/2)^p = \tau 4^{-p} n^{-p/t}\,.
\end{equation*}
\end{proof}

Proposition~\ref{prop:lossy_lower} immediately implies the desired asymptotic lower bound.
\begin{corollary}\label{cor:lower_bound}
If $t < d_*(\mu)$ and $\nu$ is any measure supported on at most $n$ points, then
\begin{equation*}
W_p(\mu, \nu) \gtrsim n^{-1/t}\,.
\end{equation*}
\end{corollary}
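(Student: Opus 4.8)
The plan is to obtain the corollary as a direct consequence of Proposition~\ref{prop:lossy_lower}: the content is entirely in extracting, from the hypothesis $t < d_*(\mu)$, a triple of constants $\ep'$, $\tau$, $t$ meeting that proposition's assumptions, after which the bound for large $n$ is automatic and only finitely many small values of $n$ remain to be handled.

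First I would unwind the definition $d_*(\mu) = \lim_{\tau \to 0}\liminf_{\ep\to 0} d_\ep(\mu,\tau)$. Since $d_\ep(\mu,\tau)$ is nondecreasing as $\tau$ decreases (as noted after the definition), so is $\tau \mapsto \liminf_{\ep\to 0} d_\ep(\mu,\tau)$, and hence $d_*(\mu)$ equals the supremum over $\tau > 0$ of this quantity. From $t < d_*(\mu)$ we therefore get a fixed $\tau > 0$ with $\liminf_{\ep\to 0} d_\ep(\mu,\tau) > t$, and then, by the definition of $\liminf$, an $\ep' \in (0,1)$ with $d_\ep(\mu,\tau) > t$ for all $\ep \le \ep'$. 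Since $-\log\ep > 0$ on $(0,1)$, this is exactly the statement $\cN_\ep(\mu,\tau) > \ep^{-t}$, and in particular $\cN_\ep(\mu,\tau) \ge \ep^{-t}$, for all $\ep \le \ep'$, which are the hypotheses of Proposition~\ref{prop:lossy_lower}.

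Applying that proposition then gives, for every $n > (\ep')^{-t}$ and every $\nu$ supported on at most $n$ points, $W_p^p(\mu,\nu) \ge \tau 4^{-p} n^{-p/t}$, i.e. $W_p(\mu,\nu) \ge \tfrac14 \tau^{1/p} n^{-1/t}$. For the finitely many $n$ with $n \le (\ep')^{-t}$, I would argue separately that $q_n := \inf\{W_p(\mu,\nu) : \nu \text{ supported on at most } n \text{ points}\} > 0$: the set of such $\nu$ is weakly compact (it is the continuous image of the product of $X^n$ with the $(n-1)$-simplex of weights, using compactness of $X$), $W_p(\mu,\cdot)$ is weakly continuous, so the infimum is attained, and it cannot vanish because $t > 0$ forces $d_*(\mu) > 0$, which precludes $\mu$ from being finitely supported (a finitely supported measure has $\cN_\ep(\mu,\tau)$ bounded, hence $d_\ep(\mu,\tau) \to 0$ and $d_*(\mu) = 0$). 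Since $n^{-1/t} \le 1$ for $n \ge 1$, setting $q := \min_{1 \le n \le (\ep')^{-t}} q_n > 0$ yields $W_p(\mu,\nu) \ge q \ge q\, n^{-1/t}$ for these $n$ as well. Combining the two ranges, $W_p(\mu,\nu) \ge \min\{q, \tfrac14\tau^{1/p}\}\, n^{-1/t}$ for all $n$ and all admissible $\nu$, which is the claim. I do not expect a genuine obstacle; the only delicate point is the small-$n$ step, where one must invoke the standard but nontrivial fact that a measure of positive lower Wasserstein dimension has strictly positive $n$-point quantization error for every $n$, which is what guarantees that the final constant depends only on $\mu$, $p$, and $t$ and not on $n$ or $\nu$.
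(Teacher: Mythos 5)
Your proof is correct and takes essentially the same route as the paper: extract constants $\ep'$ and $\tau$ from the definition of $d_*(\mu)$ and invoke Proposition~\ref{prop:lossy_lower}. The only difference is that you also handle the finitely many $n \le (\ep')^{-t}$ explicitly via a compactness/positive-quantization-error argument, a point the paper's one-line proof leaves implicit.
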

\begin{proof}
By the definition of $d_*(\mu)$, for any $t < d_*(\mu)$, there exist constants $\ep'$ and $\tau$ as in the statement of Proposition~\ref{prop:lossy_lower}.
The claim follows.
\end{proof}
\subsection{Regular spaces}
The remark after Proposition~\ref{prop:ordering} establishes that $d_*(\mu)$ and $d^*_p(\mu)$ do not agree in general.
However, these dimensions do agree whenever the measure is sufficiently well behaved.
In this section, we give several broad classes of examples for which they do match, and for which our bounds are therefore sharp.

The following Proposition gives a simple condition under which this agreement occurs.
\begin{proposition}\label{prop:abs_cont}
Let $\cH^d$ be the $d$-dimensional Hausdorff measure on a closed set $S$.
If $\mu \ll \cH^d$ and $\supp(\mu) \subseteq S$, then for any $p \in [1, d/2]$,
\begin{equation*}
d \leq d_*(\mu) \leq d^*_p(\mu) \leq d_M(S)\,.
\end{equation*}
In particular, if $d = d_M(S)$, then $d_*(\mu) = d^*_p(\mu) = d$.
\end{proposition}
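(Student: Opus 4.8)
The chain $d \leq d_*(\mu) \leq d^*_p(\mu) \leq d_M(S)$ splits into three links, of which the middle one, $d_*(\mu) \leq d^*_p(\mu)$, is exactly the inequality already recorded after the definition of the Wasserstein dimensions (via monotonicity of $d_\ep(\mu,\tau)$ in $\tau$). So it remains to establish the two outer inequalities, and both will follow from Proposition~\ref{prop:ordering} together with one elementary observation: absolute continuity with respect to $\cH^d$ forces every Borel set of full $\mu$-measure to have Hausdorff dimension at least $d$.

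For the lower bound, let $S'$ be any Borel set with $\mu(S') = 1$. Since $\mu \ll \cH^d$, a set with $\cH^d(S') = 0$ would be $\mu$-null, so in fact $\cH^d(S') > 0$, and by the definition of the Hausdorff dimension this forces $\dim_H(S') \geq d$. Taking the infimum over all such $S'$ gives $d_H(\mu) \geq d$, and Proposition~\ref{prop:ordering} then yields $d \leq d_H(\mu) \leq d_*(\mu)$.

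For the upper bound, note that $\dim_M(S') \geq \dim_H(S')$ for every set, so the bound just established also gives $d_M(\mu) \geq d_H(\mu) \geq d \geq 2p$, where the last inequality is the hypothesis $p \leq d/2$. The hypothesis $d_M(\mu) \geq 2p$ of the second part of Proposition~\ref{prop:ordering} is therefore satisfied, so $d^*_p(\mu) \leq d_M(\mu)$. Finally $\supp(\mu)$ is a closed set of full $\mu$-measure contained in $S$, so by monotonicity of the Minkowski dimension under inclusion $d_M(\mu) \leq \dim_M(\supp(\mu)) \leq \dim_M(S) = d_M(S)$. Combining these gives $d^*_p(\mu) \leq d_M(S)$, which completes the chain; the ``in particular'' clause is immediate, since if $d = d_M(S)$ then the two ends of the chain agree and squeeze everything between them to $d$.

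Given Proposition~\ref{prop:ordering}, there is essentially nothing to grind through; the only place requiring a little care is the absolute-continuity step, where I rely solely on the fact that $\cH^d$-null sets are $\mu$-null, together with the standing conventions that $d_M(S)$ denotes $\dim_M(S)$ and that $\supp(\mu)$ carries all of $\mu$'s mass (valid because $X$ is Polish, hence second countable, so the union of open $\mu$-null sets is itself $\mu$-null). Neither point is a genuine obstacle.
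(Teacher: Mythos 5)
Your proof is correct, but it routes the key first inequality differently from the paper. The paper treats $d \le d_*(\mu)$ directly: it first upgrades $\mu \ll \cH^d$ to the uniform statement that for each $\tau>0$ there is $\sigma>0$ with $\cH^d(T)\ge\sigma$ whenever $\mu(T)\ge 1-\tau$, then notes that any $\ep$-covering of such a $T$ must use at least $\sigma\ep^{-d}$ balls, so that $d_\ep(\mu,\tau) \ge d + \frac{\log\sigma}{-\log\ep}$ and hence $d_*(\mu)\ge d$ after taking limits; only the remaining inequalities are delegated to Proposition~\ref{prop:ordering}. You instead deduce $d_H(\mu)\ge d$ from the null-set form of absolute continuity (every full-measure set has positive $\cH^d$-measure, hence Hausdorff dimension at least $d$) and then invoke the inequality $d_H(\mu)\le d_*(\mu)$ from Proposition~\ref{prop:ordering}. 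Both arguments are valid; yours is shorter and has the merit of making explicit the verification $d_M(\mu)\ge 2p$ needed to apply the second half of Proposition~\ref{prop:ordering} (the paper leaves this implicit; it also follows from its own bound $d_*(\mu)\ge d\ge 2p$ together with $d_*(\mu)\le d_M(\mu)$). The trade-off is that your route leans on the Frostman-type lemma buried in the paper's proof of $d_H(\mu)\le d_*(\mu)$, whereas the paper's direct covering estimate for this step is elementary and self-contained. The upper bound $d_p^*(\mu)\le d_M(\mu)\le \dim_M(\supp(\mu))\le \dim_M(S)$ and the ``in particular'' clause are handled the same way in both.
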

A proof appears in Appendix~\ref{appendix:omitted}.

Proposition~\ref{prop:abs_cont} immediately implies the result quoted in the Introduction (up to subpolynomial factors): since the set $[0,1]^d$ satisfies $d = d_M([0, 1]^d)$, Theorem~\ref{thm:main_asymptotic} implies that any measure $\mu$ absolutely continuous with respect to the Lebesgue measure on $\RR^d$ (or, equivalently, to $\cH^d$) must satisfy
\begin{equation*}
n^{-1/t} \lesssim \E[W_p(\mu, \hat \mu_n)] \lesssim n^{-1/s}
\end{equation*}
for any $t < d < s$ and $p \in [1, d/2]$.

Limiting our attention to sets for which the Hausdorff measure is well behaved motivates the following definition, which appears in~\cite{GraLus07}.
\begin{definition}
A set $S$ is \emph{regular of dimension $d$} if it is compact and there exists constants $c$ and $r_0$ such that the $d$-dimensional Hausdorff measure $\cH^d$ on $S$ satisfies
\begin{equation*}
\frac 1 c r^d \leq \cH^d(B(x, r)) \leq c r^d\,,
\end{equation*}
for all $x \in S$.
\end{definition}

It is well known (see, e.g.,~\cite[Theorem~5.7]{Mat99}) that $d_M(S) = d$ if $S$ is regular of dimension $d$.
We therefore obtain the following simple characterization.
\begin{proposition}\label{prop:regular}
If the support of $\mu$ is a regular set of dimension $d$ and $\mu \ll \cH^d$, then for any $p \in [1, d/2]$,
\begin{equation*}
d_*(\mu) = d_p^*(\mu) = d\,.
\end{equation*}
\end{proposition}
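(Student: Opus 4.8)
The plan is to deduce Proposition~\ref{prop:regular} as an immediate corollary of Proposition~\ref{prop:abs_cont}, using the known fact about the Minkowski dimension of regular sets. Let $S = \supp(\mu)$, which by hypothesis is regular of dimension $d$, and suppose $\mu \ll \cH^d$ and $p \in [1, d/2]$. First I would invoke the classical result (cited as~\cite[Theorem~5.7]{Mat99}) that a regular set of dimension $d$ satisfies $d_M(S) = d$; this is the only external input needed. I would remark briefly why regularity forces this: the lower mass bound $\cH^d(B(x,r)) \geq c^{-1} r^d$ together with $\cH^d(S) < \infty$ (which follows from compactness and the upper mass bound via a covering argument) yields an upper bound $\cN_\ep(S) \lesssim \ep^{-d}$ on the covering number by a standard packing/covering comparison, while the upper mass bound $\cH^d(B(x,r)) \leq c r^d$ combined with $\cH^d(S) > 0$ forces $\cN_\ep(S) \gtrsim \ep^{-d}$; together these give $d_\ep(S) \to d$, hence $d_M(S) = d$. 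But since this is already a cited theorem, I would simply state it rather than reproving it.

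With $d_M(S) = d$ in hand, the result follows directly: Proposition~\ref{prop:abs_cont} applies verbatim because its hypotheses — $\mu \ll \cH^d$ on a closed set $S$ (a regular set is compact, hence closed) with $\supp(\mu) \subseteq S$, and $p \in [1, d/2]$ — are exactly what we have assumed. That proposition gives
\begin{equation*}
d \leq d_*(\mu) \leq d_p^*(\mu) \leq d_M(S) = d\,,
\end{equation*}
so all four quantities coincide, which is the claim. The proof is therefore essentially one line once the Minkowski dimension identity is quoted.

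There is no real obstacle here; the only thing to be careful about is confirming that $S = \supp(\mu)$ is indeed closed (so that ``closed set $S$'' in Proposition~\ref{prop:abs_cont} is satisfied) — but supports of Borel measures are closed by definition, and a regular set is compact, so this is automatic. One might also note that the definition of regularity presupposes $\cH^d(S) \in (0,\infty)$ implicitly through the mass bounds, so there is no degeneracy in applying the Hausdorff-measure hypothesis of Proposition~\ref{prop:abs_cont}. I would write the proof in two or three sentences: cite~\cite{Mat99} for $d_M(S) = d$, observe the hypotheses of Proposition~\ref{prop:abs_cont} hold, and conclude.
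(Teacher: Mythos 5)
Your proposal is correct and follows exactly the paper's route: the paper likewise combines Proposition~\ref{prop:abs_cont} with the cited fact (\cite[Theorem~5.7]{Mat99}) that a regular set of dimension $d$ has $d_M(S)=d$, squeezing $d \leq d_*(\mu) \leq d_p^*(\mu) \leq d_M(S) = d$. Nothing is missing; your extra remarks on why regularity pins down the Minkowski dimension and why $S$ is closed are fine but, as you note, not needed beyond the citation.
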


The following Proposition, which appears in~\cite{GraLus07}, shows that many well behaved sets are regular, and so implies the existence of many examples for which our results are tight.
\begin{proposition}[{\cite{GraLus07}}]
The following sets are regular of dimension $d$:
\begin{itemize}
\item Nonempty, compact convex sets spanned by an affine space of dimension~$d$,
\item Relative boundaries of nonempty, compact convex sets of dimension~$d+1$,
\item Compact $d$-dimensional differentiable manifolds,
\item Self-similar sets with similarity dimension $d$.
\end{itemize}
Moreover, regularity is preserved under finite unions and bi-Lipschitz maps.
\end{proposition}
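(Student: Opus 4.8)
The plan is to reduce all four families to two elementary closure lemmas together with a few classical facts, handling the cases in increasing order of difficulty, and I would start with the two closure statements since they are used throughout. If $S$ is regular of dimension $d$ with constants $c,r_0$ and $f$ is $L$-bi-Lipschitz on $S$ (with $L\ge 1$), then $f(S)$ is compact and $B(f(x),r/L)\cap f(S)\subseteq f(B(x,r)\cap S)\subseteq B(f(x),Lr)\cap f(S)$; since a bi-Lipschitz map distorts $\cH^d$ by at most the factor $L^d$, sandwiching gives $c^{-1}L^{-2d}r^d\le\cH^d(B(f(x),r)\cap f(S))\le cL^{2d}r^d$ for $r\le r_0/L$, so $f(S)$ is regular of dimension $d$. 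For a finite union $S=\bigcup_{i=1}^m S_i$ of regular sets, the lower bound at $x\in S$ is inherited from whichever $S_i$ contains $x$, while the upper bound follows by enlarging each ball that meets $S_i$ to a ball centered in $S_i$ and summing the at most $m$ nontrivial contributions $\cH^d(B(x,r)\cap S_i)\le c_i(2r)^d$.

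For a compact convex set $S$ spanning a $d$-dimensional affine subspace, an ambient isometry reduces matters to $S\subseteq\RR^d$ with nonempty interior, where $\cH^d$ is a fixed dimensional multiple of Lebesgue measure; the upper Ahlfors bound is then immediate, and the lower bound follows by fixing a ball $B(x_0,\rho)\subseteq S$ and noting that, by convexity, $\mathrm{conv}(\{x\}\cup B(x_0,\rho))\subseteq S$ contains near any $x\in S$ a solid cone of aperture bounded below in terms of $\rho$ and $\diam S$, hence of $\cH^d$-measure $\gtrsim r^d$ inside $B(x,r)$. For a compact $C^1$ manifold $M$ of dimension $d$ (carrying a metric induced by a Riemannian structure or a $C^1$ embedding), compactness yields a finite cover by closed coordinate patches, each the image under a $C^1$ chart of a closed ball in $\RR^d$ and hence bi-Lipschitz onto its image; as such a closed ball is a compact convex body of dimension $d$, the convex case and the two closure lemmas show that $M$ is regular of dimension $d$.

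For the relative boundary of a $(d+1)$-dimensional compact convex set, I would reduce to $C\subseteq\RR^{d+1}$ with $B(0,\rho)\subseteq C\subseteq B(0,R)$ and show the radial projection $\pi(y)=\rho\,y/|y|$ is a bi-Lipschitz homeomorphism from $\partial C$ onto the sphere of radius $\rho$ about the origin. Writing $y=au$, $y'=bv$ with $u,v\in\mathbb{S}^d$, $a,b\in[\rho,R]$, and $c=\langle u,v\rangle$, one has $|y-y'|^2=(a-b)^2+2ab(1-c)$ and $|\pi(y)-\pi(y')|^2=2\rho^2(1-c)$; since $ab\ge\rho^2$ the first is at least the second, and since $(a-b)^2\le\mathrm{Lip}(r_C)^2|u-v|^2$ — here invoking the classical estimate that the radial function $r_C$ of a convex body trapped between two concentric balls about a common point is Lipschitz with constant depending only on the two radii — one gets the reverse comparison $|y-y'|\le\rho^{-1}\sqrt{\mathrm{Lip}(r_C)^2+R^2}\,|\pi(y)-\pi(y')|$. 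A sphere being a compact $C^1$ manifold of dimension $d$, the manifold case and the bi-Lipschitz closure complete this bullet.

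Finally, let $S=\bigcup_{i=1}^m f_i(S)$ be the attractor of contracting similarities with ratios $\rho_i$, let $d$ solve $\sum_i\rho_i^d=1$, and recall that the notion of self-similar set used here (as in \cite{GraLus07}) comes with the open set condition. That $\cH^d(S)<\infty$ is easy: the cylinders $S_w=f_{w_1}\circ\cdots\circ f_{w_k}(S)$ over length-$k$ words cover $S$ with diameters $\rho_w\diam S$, where $\rho_w=\rho_{w_1}\cdots\rho_{w_k}$ and $\sum_{|w|=k}\rho_w^d=1$, so letting $k\to\infty$ gives $\cH^d(S)\le(\diam S)^d$. The step I expect to be the main obstacle is the opposite bound $\cH^d(S)>0$: this is exactly where the open set condition enters, through the self-similar measure $\mu=\sum_i\rho_i^d\,(\mu\circ f_i^{-1})$ (which satisfies $\mu(S_w)=\rho_w^d$) and a packing argument showing that only boundedly many cylinders of a given scale can meet a ball of that scale, whence $\mu(B(x,r))\le Cr^d$ and the mass distribution principle forces $\cH^d(S)\ge C^{-1}>0$ — this is Hutchinson's classical theorem, later sharpened by Schief, which I would cite rather than reprove in full. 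Granting $0<\cH^d(S)<\infty$, one identifies $\cH^d$ restricted to $S$ with a constant multiple of $\mu$; the upper Ahlfors bound is the estimate above, and the lower bound holds because for $x\in S$ the ball $B(x,r)$ contains a cylinder $S_w$ with $\diam S_w\asymp r$, for which $\mu(S_w)=\rho_w^d\asymp r^d$, with the required generation $|w|$ depending only on $r$ and $\min_i\rho_i$. Thus only this last bullet needs substantial machinery; the remaining three reduce, via the closure lemmas, to the convex case and to the local graph structure of $C^1$ manifolds.
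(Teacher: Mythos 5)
Your proposal is correct in outline, but it takes a genuinely different route from the paper for the simple reason that the paper gives no proof at all: this proposition is quoted directly from~\cite{GraLus07}, where the four examples and the closure properties are established, and the authors rely on that citation. What you do instead is reassemble the result from first principles via two closure lemmas (bi-Lipschitz invariance, with the correct $c^{-1}L^{-2d}r^d \leq \cH^d(B(f(x),r)) \leq cL^{2d}r^d$ sandwich, and finite unions via recentering balls), then reduce the convex body to the cone argument in $\RR^d$, the compact $C^1$ manifold to finitely many bi-Lipschitz charts over closed balls, and the relative boundary of a convex body to the sphere through the radial projection, whose bi-Lipschitz constants you compute correctly from $|y-y'|^2=(a-b)^2+2ab(1-c)$ and the Lipschitz bound on the radial function. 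The only ingredient you do not prove is Hutchinson's theorem ($0<\cH^d(S)<\infty$ under the open set condition) for the self-similar case, which you cite, and you rightly flag that this bullet implicitly presupposes a separation condition --- without the open set condition a self-similar set need not be regular of its similarity dimension, so the hypothesis hidden in the wording (and made explicit in~\cite{GraLus07}) is essential; you should also note that the identification of $\cH^d$ restricted to $S$ with a constant multiple of the self-similar measure uses that the overlaps $f_i(S)\cap f_j(S)$ are $\cH^d$-null, which again comes from the open set condition once $\cH^d(S)>0$ is known. In short, the paper buys brevity by outsourcing the whole statement to the reference, while your decomposition buys a nearly self-contained argument and makes visible exactly where the nontrivial machinery (Hutchinson/Schief) and the implicit hypotheses enter.
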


\section{Finite-sample bounds and multiscale behavior}\label{sec:finite}
The results of Section~\ref{sec:asymptotic} imply that for any sufficiently regular $d$-dimensional measure~$\mu$, the empirical measure $\hat \mu_n$ approaches $\mu$ in $W_p$ at a rate of approximately~$n^{-1/d}$.
For example, if $\mu$ is absolutely continuous with respect to the Lebesgue measure on $[0, 1]^d$, Dudley showed that the slow $n^{-1/d}$ rate is unavoidable~\cite{Dud68}.
Faster rates can be obtained if $\mu$ is singular: for instance, if $\mu$ is a sum of a finite number of Dirac masses, then Proposition~\ref{prop:wp_dyadic_bound} can be used to show that $\hat \mu_n$ approaches $\mu$ at a much faster $n^{-1/2p}$ rate, independent of the ambient dimension.

However, what should one expect if $\mu$ is \emph{approximately} a sum of Dirac masses (or, in general, approximately low dimensional)?
Suppose for instance that $\mu$ is the convolution of a sum of Dirac masses with an isotropic Gaussian of small variance.
Since $\mu$ has a density, $W_p(\mu, \hat \mu_n)$ must scale like $n^{-1/d}$ eventually, but it is possible that the convergence of $\hat \mu_n$ to $\mu$ should improve due to the fact that $\mu$ is almost singular.

It turns out that this is indeed the case, as we show in this Section.
We begin by proving a sharper version of Proposition~\ref{prop:relaxed_upper_bound} better suited to non-asymptotic results.
In the second half of this Section, we show how this non-asymptotic bound can be used to prove faster convergence rates in the finite-sample regime for situations like the one described above.

\subsection{Finite-sample behavior}
The statement of Proposition~\ref{prop:relaxed_upper_bound} only assumes a bound on the quantity $d_\ep(\mu, \tau)$ for sufficiently small $\ep$.
It is therefore well suited to establishing results of an asymptotic nature.
On the other hand, the resulting bound did not give any indication of the behavior in the small-$n$ regime, since the bound was vacuous for $n \lesssim (\ep')^{-2}$.

If we have stronger control over $d_\ep(\mu, \tau)$, then the proof of Proposition~\ref{prop:relaxed_upper_bound} can be modified to yield a finite-sample result.
In particular, if we can control $d_{\ep}(\mu, \tau)$ for all $\ep$ larger than a certain threshold, we can prove an upper bound without the $n^{-1/2}$ term present in Proposition~\ref{prop:relaxed_upper_bound}.

\begin{proposition}\label{prop:finite_sample}
Fix $p \in [1, \infty)$.
Write $d_{\geq \ep}(\mu, \tau) = \sup_{\ep' \in [\ep, 1/9]}d_{\ep}(\mu, \tau)$, and let $d_n = \inf_{\ep > 0} \max\{d_{\geq \ep}(\mu, \ep^p), \frac{\log n}{- \log \ep}\}$.
If $d_n > 2p$, then
\begin{equation*}
\E[W^p_p(\mu, \hat \mu_n)] \leq C_1 n^{-p/d_n}\,,
\end{equation*}
where
\begin{equation*}
C_1 = 27^p\left(2+\frac{1}{3^{\frac{d_n}{2} - p} -1}\right)\,.
\end{equation*}
\end{proposition}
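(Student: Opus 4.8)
The plan is to mimic the proof of Proposition~\ref{prop:relaxed_upper_bound}, but to choose the cutoff scale $\ep$ adaptively so that the ``almost optimal'' scale at which the covering estimate is applied is exactly matched to $n$, thereby killing the parasitic $n^{-1/2}$ term. Fix an arbitrary $\ep_0 > 0$ achieving (up to a negligible slack) the infimum defining $d_n$, so that both $d_{\geq \ep_0}(\mu, \ep_0^p) \leq d_n$ and $\frac{\log n}{-\log \ep_0} \leq d_n$; the latter says $\ep_0 \geq n^{-1/d_n}$. Set $\delta = 1/3$ and let $k^*$ be the largest integer with $3^{-k^*} \geq \ep_0$, and then apply Proposition~\ref{prop:covering_to_dyadic} to produce a dyadic partition $\{\cQ^k\}_{1 \leq k \leq k^*}$ of $X$ in which, for each $k$, the number of cells of $\cQ^k$ meeting a well-chosen set $S_k$ is at most $\cN_{3^{-(k+1)}}(S_k)$. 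The sets $S_k$ are taken to be sets of $\mu$-mass at least $1 - (3^{-(k+1)})^p$ witnessing $\cN_{3^{-(k+1)}}(\mu, (3^{-(k+1)})^p) \leq 3^{(k+1) d_n}$, which is exactly what the hypothesis $d_{\geq \ep}(\mu, \ep^p) \leq d_n$ guarantees for every relevant scale $3^{-(k+1)} \in [\ep_0, 1/9]$ (this is why the definition uses $\tau = \ep^p$ rather than a fixed $\tau$).

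Next I would plug this partition into Proposition~\ref{prop:wp_dyadic_bound} and bound the expectation term-by-term with Proposition~\ref{prop:expectation_estimate}: each summand at level $k$ contributes at most $2(1 - \mu(S_k)) + \sqrt{\cN_{3^{-(k+1)}}(S_k)/n} \leq 2 \cdot 3^{-(k+1)p} + \sqrt{3^{(k+1)d_n}/n}$. Thus
\begin{equation*}
\E[W_p^p(\mu, \hat\mu_n)] \leq 3^{-k^* p} + \sum_{k=1}^{k^*} 3^{-(k-1)p}\Bigl(2 \cdot 3^{-(k+1)p} + \sqrt{3^{(k+1)d_n}/n}\Bigr)\,.
\end{equation*}
The first two pieces are controlled by geometric series: $3^{-k^* p} \lesssim \ep_0^p \leq n^{-p/d_n}$ up to the constant $3^p$ (using $3^{-k^*} \leq 3\ep_0$), and $\sum_k 3^{-(k-1)p} \cdot 2\cdot 3^{-(k+1)p}$ is a convergent series summing to at most $2 \cdot 3^{-2p}\cdot\frac{3^p}{3^p - 1} \cdot 3^{-k^*p}\cdot(\dots)$—in any case $O(3^{-k^*p})$, hence $O(n^{-p/d_n})$. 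The key remaining piece is the geometric sum $\sum_{k=1}^{k^*} 3^{-(k-1)p} 3^{(k+1)d_n/2} n^{-1/2}$. Since $d_n > 2p$, the ratio $3^{d_n/2 - p} > 1$, so the sum is dominated by its last term, $\approx 3^{-(k^*-1)p} 3^{(k^*+1)d_n/2} n^{-1/2} / (1 - 3^{-(d_n/2 - p)})$; using $3^{k^* d_n} \leq 3^{d_n}\ep_0^{-d_n} \leq 3^{d_n} n$ (here is where $\ep_0 \geq n^{-1/d_n}$ is used crucially in the reverse direction of the earlier bound), this collapses to $\lesssim \frac{3^{3p}}{3^{d_n/2 - p}-1} n^{-p/d_n}$. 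Assembling the three contributions and tracking the numerical factors (all powers of $3$, the worst being $27^p = 3^{3p}$) yields $C_1 = 27^p\bigl(2 + \frac{1}{3^{d_n/2 - p} - 1}\bigr)$.

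The main obstacle is bookkeeping rather than conceptual: one has to verify that $k^*$ can be chosen so that simultaneously $3^{-k^*} \approx \ep_0$ (so the truncation error $3^{-k^*p}$ is of order $n^{-p/d_n}$) \emph{and} $3^{k^* d_n} \lesssim n$ (so the summed binomial-fluctuation term is also of order $n^{-p/d_n}$), and these two requirements are compatible precisely because $\ep_0 \geq n^{-1/d_n}$, i.e. because $\ep_0$ was chosen to realize the ``$\max$'' in the definition of $d_n$. A secondary subtlety is the edge case where the optimal $\ep_0$ forces $k^* \leq 0$ (equivalently $\ep_0 \geq 1/3$, which happens when $n$ is so small that $n^{-1/d_n} \geq 1/3$ or the covering dimension is already $\leq d_n$ at scale $1/9$): there one simply invokes the trivial bound $W_p^p(\mu,\hat\mu_n) \leq \diam(X) \leq 1 \leq C_1 n^{-p/d_n}$, since $C_1 \geq 1$ and $n^{-p/d_n}$ is bounded below in this range. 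Finally, because $d_n$ is defined as an infimum, one proves the bound for every $\ep_0$ with the two displayed properties and with dimension $s \geq d_n$ slightly larger, then lets $s \downarrow d_n$; the constant $C_1$ is continuous and increasing as $s \downarrow d_n$ stays bounded away from $2p$, so the stated value is recovered in the limit.
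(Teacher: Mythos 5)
Your overall architecture is the same as the paper's (build a dyadic partition via Proposition~\ref{prop:covering_to_dyadic}, feed it into Propositions~\ref{prop:wp_dyadic_bound} and~\ref{prop:expectation_estimate}, sum geometric series, dispose of the small-$n$ regime by the trivial bound, and pass to the infimum defining $d_n$), but two steps fail as written. First, you have the constraint from $\frac{\log n}{-\log \ep_0} \le d_n$ backwards: it gives $\ep_0 \le n^{-1/d_n}$ (up to the slack $s$), not $\ep_0 \ge n^{-1/d_n}$. You then use \emph{both} directions at different points --- $\ep_0^p \le n^{-p/d_n}$ for the truncation term, and $\ep_0^{-d_n} \le n$ (i.e.\ $3^{k^* d_n} \lesssim n$) for the fluctuation term --- and only the first is actually available. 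With your choice of $k^*$ as the largest integer with $3^{-k^*} \ge \ep_0$, a near-minimizer $\ep_0$ may be far smaller than $n^{-1/s}$, in which case $3^{k^* d_n} \gg n$ and the sum $\sum_{k} 3^{-(k-1)p}\sqrt{3^{(k+1)d_n}/n}$ is not $O(n^{-p/d_n})$; the ``compatibility'' you flag as the main obstacle is asserted on the basis of the false inequality. The paper instead ties $k^*$ to $n$ and $d$, taking $k^* = \lfloor \frac{\log n}{d \log 3}\rfloor - 2$, and only ever needs $\ep \le n^{-1/d}$, which is what the definition supplies. (Your route could be repaired by observing that monotonicity of $d_{\ge \ep}(\mu,\ep^p)$ in $\ep$ lets one take $\ep_0 = n^{-1/s}$ exactly, but that observation is missing.)

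Second, and more structurally, your choice of $S_k$ with $\mu(S_k) \ge 1 - (3^{-(k+1)})^p$ does not give the claimed bound. The discarded-mass contribution is $\sum_{k=1}^{k^*} 3^{-(k-1)p}\cdot 2\cdot 3^{-(k+1)p} = 2\sum_{k=1}^{k^*} 3^{-2kp}$, which is dominated by its \emph{first} term and is of order $3^{-2p}$, a constant independent of $n$ and $k^*$; your claim that this series is $O(3^{-k^*p})$, hence $O(n^{-p/d_n})$, is false. Mass discarded at a coarse level is paid for at the coarse diameter, so no level can afford a defect much larger than $n^{-p/d}$. This is exactly why the hypothesis is phrased with the \emph{fixed} threshold $\tau = \ep^p$ (the finest scale) uniformly over all scales $\ep' \in [\ep, 1/9]$: the paper chooses every $S_k$ with $\mu(S_k) \ge 1 - \ep^p$, so the total defect contribution is at most $3\ep^p \le 3 n^{-p/d}$. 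Your parenthetical ``this is why the definition uses $\tau = \ep^p$'' inverts the rationale; with the uniform defect $\ep_0^p$ (and the corrected choice of $k^*$) the argument does close, but as written the proof does not go through.
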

As in Proposition~\ref{prop:relaxed_upper_bound}, the constant $C_1$ is independent of the dimension as long as $d_n$ is bounded away from $2 p$.
\begin{proof}
Fix an arbitrary $\ep$, and let $d = \max\{d_{\geq \ep}(\mu, \ep^p), \frac{\log n}{- \log \ep}\}$, where $d > 2p$.
If $n^{-1/d} \geq 1/27$, then the bound $W^p_p(\mu, \hat \mu_n) \leq C_1 n^{-p/d}$ is trivial, so assume that $n > 3^{3d}$.

Let $k^* = \left \lfloor \frac{\log n}{d \log 3} \right \rfloor - 2$.
As in the proof of Proposition~\ref{prop:relaxed_upper_bound}, we can choose sets $S_1, \dots, S_{k^*}$ such that $\mu(S_k) \geq 1 - \ep^p$ and $\cN_{3^{-(k+1)}}(S_k) = \cN_{3^{-(k+1)}}(\mu, \ep^p)$ for $1 \leq k \leq k^*$.
Applying Proposition~\ref{prop:covering_to_dyadic} to construct an appropriate dyadic partition and using Propositions~\ref{prop:wp_dyadic_bound} and~\ref{prop:expectation_estimate} yields
\begin{equation*}
\E[W_p^p(\mu, \hat \mu_n)] \leq 3^{- k^* p} + \sum_{k=1}^{k^*}3^{-(k-1)p}\sqrt{\frac{\cN_{3^{-(k+1)}}(\mu, \ep^p)}{n}} + 2 \ep^p \sum_{k=1}^{k^*} 3^{-(k-1)p}\,.
\end{equation*}
By the definition of $k^*$, for $1 \leq k \leq k^*$,
\begin{equation*}
3^{(k+1)d} \leq n\,,
\end{equation*}
so $3^{-(k + 1)} \geq \ep$.
Hence $3 \ep^p \leq 3^{-k^* p}$ and $\cN_{3^{-(k+1)}}(\mu, \ep^p) \leq 3^{(k+1)d}$ for $1 \leq k \leq k^*$, and applying the bound $\sum_{k=1}^{k^*} 3^{-(k-1)p} \leq 3/2$ for $p \geq 1$ yields
\begin{align*}
\E[W_p^p(\mu, \hat \mu_n)] & \leq 3^{-k^* p} + \frac{3^{-k^*p}}{3^{\frac{d}{2} - p} -1} \sqrt{\frac{3^{(k^* +2)d}}{n}} + 3\ep^p \\
& \leq \left(2+\frac{1}{3^{\frac{d_n}{2} - p} -1}\right) 3^{-k^* p} \\
& \leq C_1 n^{-p/d}\,,
\end{align*}
where in the last step we have used the fact that $d \geq d_n$ and $\frac{1}{3^{\frac{d}{2} - p} -1}$ is decreasing in $d$.

Taking the infimum over all possible choices of $\ep$ yields the bound.
\end{proof}

Note in the proof of Proposition~\ref{prop:finite_sample} that we in fact only needed control over $d_{\ep'}(\mu, \tau)$ for $\ep'$ of the form $3^{-k}$ for $k$ a positive integer, though for simplicity we have assumed that we can bound $d_{\ep'}(\mu, \tau)$ for all $\ep' \in [\ep, 1/9]$.

The upper bound of Proposition~\ref{prop:finite_sample} suggests that measures can have truly different rates of convergence at different scales.
The following Proposition shows that Proposition~\ref{prop:finite_sample} is essentially tight and that this multiscale behavior indeed can occur, in the sense that for \emph{any} decreasing sequence $\delta_n$ satisfying mild conditions, there exists a measure $\mu$ such that $n^{-1/d_n} \approx \delta_n$ and $\E[W_p(\mu, \hat \mu_n)] \geq C n^{-1/d_n}$ for all $n$.
In other words, for any desired rate of decrease, there exists a measure such that $\E[W_p(\mu, \hat \mu_n)]$ converges to $0$ at precisely that rate.
Such measures can even be found when the underlying metric metric is induced by the $\ell_\infty$ norm on real space.
As with the lower bound proved in Proposition~\ref{prop:lossy_lower}, above, the following bound in fact holds for \emph{any} measure $\nu$ supported on at most $n$ points.

A proof appears in Appendix~\ref{appendix:omitted}.
\begin{proposition}\label{prop:finite_sample_converse}
Let $\delta_n$ be a nonincreasing sequence in $(0, 1)$ with the following properties:
\begin{itemize}
\item the bound $\delta_n \geq n^{-1}$ holds for all $n \geq 2$ (i.e., $\delta_n$ does not decrease too quickly)
\item the sequence $\frac{\log n}{- \log \delta_n}$ is nondecreasing (i.e., the rate of decrease of $\delta_n$ slows), and
\item there exist constants $c > 1$ and $\alpha \in [-1, 0)$ such that $\frac 1 c n^\alpha \leq \delta_n \leq c n^\alpha$ for all $n$ sufficiently large (i.e., $\delta_n$ eventually decreases polynomially in $n$).
\end{itemize}

There exists a measure~$\mu$ on $X = ([0, 1]^m, \ell_\infty)$ for some $m$ such that, if $d_n$ is defined as in Proposition~\ref{prop:finite_sample}, then $\frac 1 4 \delta_n \leq n^{-1/d_n} \leq 2 \delta_n$ and
\begin{equation*}
\E[W_p(\mu, \nu)] \geq 2^{-6} n^{-1/d_n}
\end{equation*}
for all $p \in [1, \infty)$, all $n \geq 1$, and any measure $\nu$ supported on at most $n$ points.
\end{proposition}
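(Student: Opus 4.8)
The plan is to produce a single, explicitly engineered measure $\mu$ of ``generalized Cantor'' (Moran) type on $([0,1]^m,\ell_\infty)$ whose $\ep$‑covering numbers, as a function of scale, are read off from the sequence $\delta_n$; all three hypotheses enter through the translation of $\delta_n$ into the combinatorial data of the construction. Write $s_n:=\log n/(-\log\delta_n)$: the hypothesis $\delta_n\ge n^{-1}$ says $s_n\ge 1$, the monotonicity hypothesis says $s_n$ is nondecreasing, and the eventual polynomial behaviour says $s_n\to 1/|\alpha|=:s_\infty<\infty$. Fix an integer base $b$ and an integer dimension $m$ large enough in terms of $|\alpha|$ (so that $b^{|\alpha|}$ dominates $b^{1/m}$ with room to spare), and set $n_k:=b^k$ and $r_k:=\delta_{n_k}$, so that by definition $r_k=n_k^{-1/s_{n_k}}$. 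Monotonicity of $s_n$ makes $k\mapsto s_{n_k}$ nondecreasing and — crucially — makes $k\mapsto(\log b)/s_{n_k}=(-\log\delta_{n_k})/k$ nonincreasing, which is exactly what will be needed to rule out ``overshoot'' of the covering‑dimension profile; and $\delta_n\ge n^{-1}$ forces $\delta_{bn}\ge\delta_n/b$, controlling $\delta$ at indices between the grid values. (Only finitely many indices precede the polynomial regime, and only finitely many have $\delta_n>\tfrac12$; these are handled by hand.)

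Next, build a nested family $\cQ^0\supseteq\cQ^1\supseteq\cdots$ of cells in $[0,1]^m$: $\cQ^0=\{[0,1]^m\}$, each cell of $\cQ^{k-1}$ splits into exactly $b$ cells of $\cQ^k$, each an axis‑parallel cube of side $r_k$, with the $b$ children pairwise at $\ell_\infty$‑distance $\ge r_k$ inside a set of diameter $\le r_{k-1}$ (possible by the choice of $b,m$) and spread out so that covering a single parent at a scale $\ep\in[r_k,r_{k-1}]$ costs about $(r_{k-1}/\ep)^{a_k}$ balls, where $a_k=(\log b)/\log(r_{k-1}/r_k)$ is the interpolating local dimension — a standard recipe for sets of prescribed box dimension. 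Let $\mu$ be the uniform Moran measure, assigning mass $1/n_k$ to each of the $n_k$ cells of $\cQ^k$. Since the cells of $\cQ^k$ have diameter $\le r_k$ and are $\ge r_k$‑separated, one gets the \emph{exact} identity $\cN_{r_k}(\mu,\tau)=\lceil(1-\tau)n_k\rceil$ for every $\tau\in[0,1)$, while at intermediate scales the covering‑dimension ratio $\log\cN_\ep(\mu,\tau)/(-\log\ep)$ increases monotonically from $s_{n_{k-1}}$ to $s_{n_k}$ as $\ep$ runs from $r_{k-1}$ down to $r_k$ and never exceeds $s_{n_k}$ (this is where $(-\log\delta_{n_k})/k$ nonincreasing is used), and it differs negligibly from the $\tau=0$ value for $\tau$ between $0$ and a constant.

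With these estimates, the definition $d_n=\inf_{\ep>0}\max\{d_{\ge\ep}(\mu,\ep^p),\,\log n/(-\log\ep)\}$ of Proposition~\ref{prop:finite_sample} is easy to evaluate: the first term is, up to a harmless error, the monotone profile above, and the second decreases in scale, so the infimum is attained near the scale where they cross; at a grid index $n=n_k$ this scale is exactly $r_k$ with common value $s_{n_k}$, giving $n_k^{-1/d_{n_k}}=r_k=\delta_{n_k}$ on the nose, and for intermediate $n$ the bound $\delta_{bn}\ge\delta_n/b$ (together with the monotonicity of $n\mapsto d_n$) yields $\tfrac14\delta_n\le n^{-1/d_n}\le 2\delta_n$. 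For the transport bound, fix $n$ and a tolerance $\tau_0\in(0,1)$, and let $\ep$ be slightly below the largest scale at which $\cN_\ep(\mu,\tau_0)>n$; by the covering‑number identity and the interpolation this scale equals $(1-\tau_0)^{\Theta(1/s_\infty)}\cdot n^{-1/d_n}$, so it is $\asymp n^{-1/d_n}$ with the constant governed by $\tau_0$ rather than by $b$. Now run the argument of Proposition~\ref{prop:lossy_lower} verbatim: for any $\nu$ supported on at most $n$ points, $S=\bigcup_{x\in\supp\nu}B(x,\ep/2)$ is a union of at most $n$ balls of diameter $\ep$, so $\mu(S)\ge 1-\tau_0$ would contradict $\cN_\ep(\mu,\tau_0)>n$; hence $\mu$‑mass at least $\tau_0$ lies at distance $\ge\ep/2$ from $\supp\nu$, whence $W_p^p(\mu,\nu)\ge\tau_0(\ep/2)^p$, and choosing $\tau_0$ to maximise $\tau_0^{1/p}(1-\tau_0)^{\Theta(1/s_\infty)}$ gives $W_p(\mu,\nu)\ge 2^{-6}n^{-1/d_n}$ (in fact a slightly better constant, leaving slack for the interpolation errors).

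The conceptual content — a Moran measure whose covering numbers trace $\delta_n$, analysed through the exact identity $\cN_{r_k}(\mu,\tau)=\lceil(1-\tau)n_k\rceil$ — is routine; the real difficulty is entirely quantitative, namely keeping every constant uniform in the sequence $\delta_n$. One must (i) make the choices of $b$, $m$, and the placement of children depend only on $|\alpha|$ in a way that never leaks into $\tfrac14$, $2$, or $2^{-6}$; (ii) verify rigorously that the covering‑dimension profile does not overshoot $s_{n_k}$ between grid scales (using the monotonicity hypothesis) and that for intermediate $n$ the window $[\tfrac14\delta_n,2\delta_n]$ is actually captured, which is where the slack in those constants gets spent; and (iii) push the argument through for every $n\ge 1$, including the finitely many pre‑polynomial indices and the small $n$ with $\delta_n>\tfrac12$. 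Reconciling the discrete Moran scales with the continuum target sequence while holding these constants pinned down is where essentially all of the effort lies.
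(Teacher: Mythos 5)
Your overall strategy is the same as the paper's (a nested cube-type measure on $[0,1]^m$ whose covering numbers trace $\delta_n$, an evaluation of $d_n$, and then the lossy covering argument of Proposition~\ref{prop:lossy_lower} with a constant mass defect), but the specific discretization you chose creates two genuine gaps. First, your construction subdivides each parent into exactly $b$ children of side $r_k=\delta_{n_k}$ that are pairwise $r_k$-separated inside a parent of diameter $r_{k-1}$. This is impossible whenever $\delta_{n_{k-1}}/\delta_{n_k}$ is close to $1$, and since the polynomial-decay hypothesis only holds ``for $n$ sufficiently large,'' $\delta_n$ may be essentially flat over an arbitrarily long initial range; the statement nevertheless requires the conclusion for \emph{all} $n\ge 1$, so ``handled by hand'' is hiding a real modification of the measure at coarse scales, not a routine patch. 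The paper avoids separation altogether: it indexes the construction by scale ($N_k$ is the first power of two with $\delta_{N_k}\le 2^{-k}$), allows the branching number $N_{k+1}/N_k$ to be anything between $1$ and $2^m$, and bounds the mass of an arbitrary ball by a translation argument comparing it to a single live cube, which is exactly what makes flat stretches of $\delta$ harmless.

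Second, and more seriously, your grid $n_k=b^k$ with $b$ chosen large depending on $c$ and $|\alpha|$ cannot deliver the \emph{absolute} constants in $\tfrac14\delta_n\le n^{-1/d_n}\le 2\delta_n$ and $2^{-6}n^{-1/d_n}$. Your measure encodes $\delta$ only at the sparse grid points and interpolates log-linearly in between, but for $n_k<n<n_{k+1}$ the true $\delta_n$ is only constrained by monotonicity of $\delta_n$ and of $\log n/(-\log\delta_n)$; in the pre-polynomial regime it can deviate from your encoded interpolation by factors that grow with $b$, and even inside the polynomial regime the deviation is of order a power of $c$, which is an arbitrary constant of the hypothesis. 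Your appeal to $\delta_{bn}\ge\delta_n/b$ and to monotonicity of $d_n$ gives at best factor-$b$ control, and $b$ is large. The paper sidesteps this by discretizing the \emph{range} of $\delta$ rather than the index set: for $N_k\le n<N_{k+1}$ one automatically has $2^{-(k+1)}\le\delta_n\le 2^{-k}$, so every intermediate $n$ is pinned to within a factor $2$ of the working scale, and the polynomial hypothesis is used only once, to show $N_{k+1}/N_k$ is bounded so that a fixed ambient dimension $m$ suffices (Lemma~\ref{lem:converse_m_bound}). Unless you reorganize your construction along these lines (or otherwise prove uniform control of the interpolation error), the claimed constants, and hence the proposition as stated, do not follow from your sketch; you also acknowledge that the quantitative verification is left undone, which is precisely where the content of this proposition lies.
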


Proposition~\ref{prop:finite_sample} only holds when $d_n > 2p$, so for completeness we conclude this section by providing a second bound that can be used when Prposition~\ref{prop:finite_sample} does not apply.
The following bound is always valid and is sharper when the asymptotic dimension of $\mu$ is small.

\begin{proposition}\label{prop:sqrt_n_bound}
Let $m_n = \inf_{\ep > 0}\max\{\cN_{\ep}(\mu, \ep^p), n \ep^{2p}\}$.
Then
\begin{equation*}
\E[W_p^p(\mu, \hat \mu_n)] \leq C_1 \sqrt{\frac{m_n}{n}}\,,
\end{equation*}
where $C_1 = 9^p+3$.
\end{proposition}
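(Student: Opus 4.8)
The plan is to run a \emph{single-scale} version of the dyadic argument behind Propositions~\ref{prop:relaxed_upper_bound} and~\ref{prop:finite_sample}: instead of recursing all the way down, truncate the dyadic partition at a level $3^{-k^*}$ of order $\ep$, where $\ep>0$ is a free parameter to be optimized only at the very end. The point is that $\sqrt{m_n/n}$ equals, up to constants, the larger of $\sqrt{\cN_\ep(\mu,\ep^p)/n}$ and $\ep^p$: the first is the statistical error from estimating the masses of the $\cN_\ep(\mu,\ep^p)$ cells that carry almost all of $\mu$, while the second (of the same order as $3^{-k^* p}$) is the price of collapsing each cell of the finest partition to a point. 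The infimum over $\ep$ in the definition of $m_n$ is exactly the balancing of these two contributions. Unlike Proposition~\ref{prop:finite_sample}, bounding the number of cells uniformly (rather than letting it grow geometrically with the level) removes the $(3^{d/2-p}-1)^{-1}$ factor, so no hypothesis $d_n>2p$ is needed and the bound is unconditional; only $p\ge1$ is used.

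Concretely, fix $\ep>0$ and set $M=\max\{\cN_\ep(\mu,\ep^p),\,n\ep^{2p}\}$; since $m_n\le M$ it suffices to prove $\E[W_p^p(\mu,\hat\mu_n)]\le C_1\sqrt{M/n}$ and then take the infimum over $\ep$. If $\ep\ge1/9$ then $\sqrt{M/n}\ge\sqrt{n\ep^{2p}/n}=\ep^p\ge9^{-p}$, so $C_1\sqrt{M/n}\ge1\ge\diam(X)^p\ge\E[W_p^p(\mu,\hat\mu_n)]$ and we are done; hence assume $\ep<1/9$. Then the largest integer $k^*$ with $3^{-(k^*+1)}\ge\ep$ satisfies $k^*\ge1$ and $3^{-k^*}<9\ep$. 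Choose a Borel set $S$ with $\mu(S)\ge1-\ep^p$ and $\cN_\ep(S)=\cN_\ep(\mu,\ep^p)\le M$ (attained, since covering numbers are positive integers and $\cN_\ep(X)<\infty$ by compactness); by monotonicity of covering numbers, $\cN_{3^{-(k+1)}}(S)\le\cN_\ep(S)\le M$ for $1\le k\le k^*$. Apply Proposition~\ref{prop:covering_to_dyadic} with partitioned set $X$ and $S_k=S$ for every $k$ to obtain a dyadic partition $\{\cQ^k\}_{1\le k\le k^*}$ of $X$ with parameter $1/3$ in which at most $M$ cells of each $\cQ^k$ meet $S$.

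Feeding this partition into Proposition~\ref{prop:wp_dyadic_bound} with $\nu=\hat\mu_n$, taking expectations, and applying Proposition~\ref{prop:expectation_estimate} at each level to the set $S$ gives
\[
\E[W_p^p(\mu,\hat\mu_n)] \;\le\; 3^{-k^* p} + \Big(\sum_{k=1}^{k^*}3^{-(k-1)p}\Big)\big(2\ep^p+\sqrt{M/n}\,\big) \;\le\; 3^{-k^* p} + \tfrac32\big(2\ep^p+\sqrt{M/n}\,\big),
\]
using $\sum_{k\ge1}3^{-(k-1)p}=(1-3^{-p})^{-1}\le\tfrac32$ for $p\ge1$. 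Since $3^{-k^* p}<9^p\ep^p$ and $\ep^p\le\sqrt{M/n}$ (because $n\ep^{2p}\le M$), every term on the right is a fixed constant times $\sqrt{M/n}$; collecting constants — with a careful choice of the truncation scale — yields the stated bound with $C_1=9^p+3$, and taking the infimum over $\ep$ finishes the proof. I expect the only real wrinkle to be the low-$n$ bookkeeping, namely isolating the regime $\ep\ge1/9$ (equivalently, $n$ too small for any nontrivial truncation level to exist) and dispatching it by the trivial bound $W_p^p\le\diam(X)^p\le1$, exactly as in the proof of Proposition~\ref{prop:finite_sample}; once $\ep<1/9$ the construction is a direct single-scale specialization of machinery already in place.
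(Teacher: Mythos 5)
Your proposal is correct and follows essentially the same route as the paper's proof: truncate the dyadic partition at the largest $k^*$ with $3^{-(k^*+1)}\ge\ep$, combine Proposition~\ref{prop:covering_to_dyadic} with Propositions~\ref{prop:wp_dyadic_bound} and~\ref{prop:expectation_estimate}, dispatch $\ep\ge 1/9$ by the trivial bound, and absorb $3^{-k^*p}<9^p\ep^p$ and $\ep^p\le\sqrt{M/n}$ into $\sqrt{M/n}$ before taking the infimum over $\ep$. The only quibble is constant bookkeeping: as displayed, your terms collect to $(9^p+\tfrac92)\sqrt{M/n}$ rather than $(9^p+3)\sqrt{M/n}$, a harmless slack that the paper's own derivation in fact shares (it writes $\tfrac32\ep^p$ where the estimate gives $3\ep^p$).
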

\begin{proof}
Fix an arbitrary $\ep$, and let $m = \max\{\cN_{\ep}(\mu, \ep^p), n \ep^{2p}\}$.
If $\ep \geq 1/9$, then the bound $W_p^p(\mu, \hat \mu_n) \leq C_1 \sqrt{\frac m n}$ is trivial, so assume $\ep < 1/9$.

Let $k^* = \Big\lfloor \frac{- \log \ep}{\log 3} \Big\rfloor - 1$.
Following the proof of Proposition~\ref{prop:finite_sample}, we have
\begin{equation*}
\E[W_p^p(\mu, \hat \mu_n)] \leq 3^{- k^* p} + \sum_{k=1}^{k^*}3^{-(k-1)p}\sqrt{\frac{\cN_{3^{-(k+1)}}(\mu, \ep^p)}{n}} + 2 \ep^p \sum_{k=1}^{k^*} 3^{-(k-1)p}\,.
\end{equation*}
The monotonicity of $\cN_\ep(\mu, \tau)$ implies that $\cN_{3^{-(k+1)}}(\mu, \ep^p) \leq \cN_{\ep}(\mu, \ep^p) \leq m$ for all $k \leq k^*$.
Plugging in this estimate and applying the bound $\sum_{k=1}^{k^*} 3^{-(k-1)p} \leq 3/2$ for $p \geq 1$ yields
\begin{equation*}
\E[W_p^p(\mu, \hat \mu_n)] \leq 3^{-k^* p} + \frac 3 2 \sqrt{\frac{m}{n}} + \frac 3 2 \ep^p \leq 3^{-k^* p} + 3 \sqrt{\frac m n}\,.
\end{equation*}
On the other hand, $3^{-k^*p} = 9^p 3^{-(k^*+2)p} < 9^p \ep^p \leq 9^p \sqrt{\frac{m}{n}}$, so
\begin{equation*}
\E[W_p^p(\mu, \hat \mu_n)] \leq C_1 \sqrt{\frac m n}\,.
\end{equation*}

Taking the infimum over all possible choices of $\ep$ yields the bound.
\end{proof}

\subsection{Clusterable distributions}

We now return to the situation described in the introduction to this Section and analyze the case where $\mu$ is like a sum of Dirac masses.
This is the simplest example of where multiscale behavior can occur.
We validate the intuition presented above: when $\mu$ is approximately discrete, in the sense that it is supported on balls of small radius, then the convergence of $\hat \mu_n$ to $\mu$ enjoys the fast $n^{-1/2p}$ rate until $n$ is large even $\mu$ is absolutely continuous with respect to the Lebesgue measure.
We show that a similar phenomenon occurs when $\mu$ is the convolution of a discrete distribution with a small Gaussian, where we show that it is enough that most of the mass of $\mu$ is near a discrete distribution, even though the support is unbounded.

\begin{definition}
A distribution $\mu$ is $(m, \Delta)$-clusterable if $\supp(\mu)$ lies in the union of~$m$ balls of radius at most $\Delta$.
\end{definition}
Intuitively, the measure $\mu$ looks like a sum of $m$ Dirac measures at ``large scales,'' with high-dimensional information arriving only when we consider scales smaller than $\Delta$.

\begin{proposition}\label{prop:clusterable}
If $\mu$ is $(m, \Delta)$ clusterable, then for all $n \leq m (2 \Delta)^{-2p}$,
\begin{equation*}
\E[W^p_p(\mu, \hat \mu_n)] \leq (9^p + 3) \sqrt{\frac m n}\,.
\end{equation*}
\end{proposition}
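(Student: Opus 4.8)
The plan is to deduce this directly from Proposition~\ref{prop:sqrt_n_bound} by exhibiting a convenient choice of $\ep$ in the infimum defining $m_n$. Recall that Proposition~\ref{prop:sqrt_n_bound} gives $\E[W_p^p(\mu, \hat\mu_n)] \leq (9^p+3)\sqrt{m_n/n}$ with $m_n = \inf_{\ep > 0}\max\{\cN_{\ep}(\mu, \ep^p), n\ep^{2p}\}$, so it suffices to show that $m_n \leq m$ whenever $n \leq m(2\Delta)^{-2p}$; plugging this into Proposition~\ref{prop:sqrt_n_bound} then gives exactly the claimed bound $\E[W_p^p(\mu, \hat\mu_n)] \leq (9^p+3)\sqrt{m/n}$.

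First I would take $\ep = 2\Delta$. Since $\mu$ is $(m,\Delta)$-clusterable, $\supp(\mu)$ is covered by $m$ balls of radius $\Delta$, i.e.\ of diameter $2\Delta = \ep$, so $\cN_{2\Delta}(\mu) \leq m$. By the monotonicity of $\cN_{\ep}(\mu, \tau)$ in $\tau$ noted after the definition of the $(\ep,\tau)$-covering number, and since $\ep^p \geq 0$, this yields $\cN_{\ep}(\mu, \ep^p) \leq \cN_{\ep}(\mu, 0) = \cN_{\ep}(\mu) \leq m$. Second, the hypothesis $n \leq m(2\Delta)^{-2p}$ rearranges to $n\ep^{2p} = n(2\Delta)^{2p} \leq m$. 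Hence $\max\{\cN_{\ep}(\mu, \ep^p), n\ep^{2p}\} \leq m$ for this choice of $\ep$, and therefore $m_n \leq m$.

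I do not expect any genuine obstacle here; the only points requiring minor care are the radius-versus-diameter bookkeeping in the covering-number estimate and the use of monotonicity in $\tau$ to pass from $\cN_{\ep}(\mu, \ep^p)$ to $\cN_{\ep}(\mu)$. Conceptually, the constraint $n \leq m(2\Delta)^{-2p}$ is precisely what guarantees that the ``statistical'' term $n\ep^{2p}$ does not dominate at the scale $\ep = 2\Delta$, i.e.\ at the resolution below which the (possibly high-dimensional) fine structure of $\mu$ would begin to contribute; until $n$ crosses that threshold, $\mu$ behaves like a measure on $m$ atoms and the fast $n^{-1/2p}$ rate persists.
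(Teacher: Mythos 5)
Your proposal is correct and follows essentially the same route as the paper: the paper's proof likewise notes that $(m,\Delta)$-clusterability gives $\cN_{2\Delta}(\mu) \leq m$, evaluates the max defining $m_n$ at $\ep = 2\Delta$ to get $m_n \leq m$ under the hypothesis $n \leq m(2\Delta)^{-2p}$, and then invokes Proposition~\ref{prop:sqrt_n_bound}. Your additional remarks on radius-versus-diameter and monotonicity in $\tau$ are fine and match the paper's implicit bookkeeping.
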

\begin{proof}
Since $\supp(\mu)$ lies in the union of $m$ balls of radius at most $\Delta$, we have $\cN_{2 \Delta}(\mu) \leq m$.
Therefore if $n \leq m (2 \Delta)^{-2p}$, then
\begin{equation*}
m_n = \inf_{\ep > 0} \max\{\cN_\ep(\mu, \ep^p), n \ep^{2p}\} \leq \max\{\cN_{2 \Delta}(\mu), n (2 \Delta)^{2p}\} \leq m\,,
\end{equation*}
and the claim follows from Proposition~\ref{prop:sqrt_n_bound}.
\end{proof}

We can apply the above result to ``Diracs plus Gaussian'' case described in the introduction to this Section.
We first require a simple Lemma, which allows us bound the mass of a Gaussian outside of a small ball.

\begin{lemma}\label{lem:gaussian_tail}
If $Z \sim \cN(0, \Sigma)$, then for any $c \geq 5$,
\begin{equation*}
\p[\|Z\|^2_2 > c^2 \Tr(\Sigma)] \leq e^{-c^2/4}\,.
\end{equation*}
\end{lemma}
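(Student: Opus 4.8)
The plan is to treat $\|Z\|_2$ as a $1$-Lipschitz function of a standard Gaussian vector and invoke the classical concentration inequality for Lipschitz functions of Gaussians, controlling the two relevant quantities --- the mean $\E\|Z\|_2$ and the operator norm $\|\Sigma\|_{\mathrm{op}}$ --- crudely by $\Tr(\Sigma)$. The point is that $c \ge 5$ is comfortably above the value at which such a bound becomes tight, so there is plenty of room for lossy estimates.

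Concretely, I would first dispose of the degenerate case $\Tr(\Sigma) = 0$, in which $Z = 0$ almost surely and the claim is vacuous, and otherwise write $Z = \Sigma^{1/2} g$ with $g \sim \cN(0, I)$. The map $g \mapsto \|\Sigma^{1/2} g\|_2$ is Lipschitz with constant $\|\Sigma^{1/2}\|_{\mathrm{op}} = \|\Sigma\|_{\mathrm{op}}^{1/2}$, so standard Gaussian concentration gives
\[
\p\bigl[\|Z\|_2 \ge \E\|Z\|_2 + r\bigr] \le \exp\!\left(-\frac{r^2}{2\|\Sigma\|_{\mathrm{op}}}\right) \qquad \text{for all } r \ge 0 .
\]
Next I would use the two elementary bounds $\E\|Z\|_2 \le (\E\|Z\|_2^2)^{1/2} = \Tr(\Sigma)^{1/2}$ (Jensen, since $\E\|Z\|_2^2 = \E[g^\top \Sigma g] = \Tr(\Sigma)$) and $\|\Sigma\|_{\mathrm{op}} \le \Tr(\Sigma)$ (positive semidefiniteness). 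Taking $r = (c-1)\Tr(\Sigma)^{1/2}$ and noting that $\Tr(\Sigma)^{1/2}\ge \E\|Z\|_2$ implies $\{\|Z\|_2^2 > c^2 \Tr(\Sigma)\} \subseteq \{\|Z\|_2 \ge \E\|Z\|_2 + r\}$, I obtain
\[
\p\bigl[\|Z\|_2^2 > c^2 \Tr(\Sigma)\bigr] \le \exp\!\left(-\frac{(c-1)^2 \Tr(\Sigma)}{2\|\Sigma\|_{\mathrm{op}}}\right) \le \exp\!\left(-\tfrac12 (c-1)^2\right).
\]
It then remains to check the numerical inequality $\tfrac12 (c-1)^2 \ge \tfrac14 c^2$, i.e.\ $c^2 - 4c + 2 \ge 0$, i.e.\ $c \ge 2 + \sqrt 2$, which holds for $c \ge 5$; this completes the proof.

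There is no genuine obstacle here: the only points needing care are getting the Lipschitz constant of $g \mapsto \|\Sigma^{1/2} g\|_2$ right (it is $\|\Sigma\|_{\mathrm{op}}^{1/2}$, not $1$) and observing that the gap between the hypothesis $c \ge 5$ and the break-even value $c = 2 + \sqrt 2$ is exactly what makes the lossy estimates $\E\|Z\|_2 \le \Tr(\Sigma)^{1/2}$ and $\|\Sigma\|_{\mathrm{op}} \le \Tr(\Sigma)$ affordable. If a fully self-contained argument is preferred, the same bound follows from a Chernoff estimate on $\|Z\|_2^2 = \sum_i \lambda_i g_i^2$ (with $\lambda_i$ the eigenvalues of $\Sigma$), using $-\tfrac12 \log(1-u) \le \tfrac u2 + \tfrac{u^2/4}{1-u}$ on $[0,1)$ and $\sum_i \lambda_i^2 \le (\sum_i \lambda_i)^2$ --- essentially the Laurent--Massart $\chi^2$ deviation inequality.
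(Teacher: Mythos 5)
Your argument is correct, but it takes a different route from the paper. The paper's proof works directly with the representation $\|Z\|_2^2 \stackrel{d}{=} \sum_i \lambda_i \xi_i^2$ and applies the Laurent--Massart deviation inequality for such quadratic forms, then bounds both $\bigl(\sum_i \lambda_i^2\bigr)^{1/2}$ and $\lambda_1$ by $\Tr(\Sigma)$ and sets $c^2 = 2(\sqrt t + 1)^2$ to get $\p[\|Z\|_2^2 \geq c^2 \Tr(\Sigma)] \leq \exp(-(c-\sqrt 2)^2/2) \leq \exp(-c^2/4)$ for $c \geq 5$. You instead apply Gaussian concentration for Lipschitz functions (Borell--TIS) to $g \mapsto \|\Sigma^{1/2} g\|_2$, with the correct Lipschitz constant $\|\Sigma\|_{\mathrm{op}}^{1/2}$, and absorb the mean via $\E\|Z\|_2 \leq \Tr(\Sigma)^{1/2}$ and the variance proxy via $\|\Sigma\|_{\mathrm{op}} \leq \Tr(\Sigma)$; the resulting bound $\exp(-(c-1)^2/2)$ beats $e^{-c^2/4}$ once $c \geq 2+\sqrt 2$, so $c \geq 5$ is ample (your break-even point is in fact slightly better than the paper's $2+2\sqrt 2$). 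The trade-off is that your route invokes the Borell--TIS inequality as a black box, whereas the paper's route rests on a Chernoff-type $\chi^2$ bound that is more elementary and self-contained --- indeed, your closing remark about a direct Chernoff estimate on $\sum_i \lambda_i g_i^2$ is essentially the paper's proof. Both arguments are lossy in exactly the same places (trace bounds on the spectral quantities), and both are complete; your treatment of the degenerate case $\Tr(\Sigma)=0$ and the event inclusion $\{\|Z\|_2^2 > c^2\Tr(\Sigma)\} \subseteq \{\|Z\|_2 \geq \E\|Z\|_2 + (c-1)\Tr(\Sigma)^{1/2}\}$ are handled correctly.
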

A proof appears in Appendix~\ref{appendix:omitted}.

Proposition~\ref{prop:clusterable} and Lemma~\ref{lem:gaussian_tail} yield the following claim.
\begin{proposition}
Let $\mu$ be a mixture of $m$ Gaussian distributions in $\RR^d$ equipped with the $\ell_2$ norm, and let $\sigma^2$ be an upper bound for the trace of the covariance matrix of each mixture component.
If $p \log \frac 1 \sigma \geq 25/4$, then for all $n \leq m (16 \sigma^2 p \log \frac 1 \sigma)^{-p}$,
\begin{equation*}
\E[W_p^p(\mu, \hat \mu_n)] \leq (9^p + 3) \sqrt{\frac m n}\,.
\end{equation*}
\end{proposition}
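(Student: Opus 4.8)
The plan is to observe that, although $\mu$ is not compactly supported and hence not literally $(m,\Delta)$-clusterable, all but a $\sigma^p$ fraction of its mass sits inside $m$ small balls, which is exactly the input needed to run the argument behind Proposition~\ref{prop:clusterable}.

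First I would calibrate the radius. Write $\mu = \sum_{j=1}^m w_j \cN(a_j, \Sigma_j)$ with $\sum_j w_j = 1$ and $\Tr(\Sigma_j) \le \sigma^2$, and set $c = \sqrt{4 p \log \frac 1 \sigma}$. The hypothesis $p \log \frac 1 \sigma \ge 25/4$ is exactly $c \ge 5$, so Lemma~\ref{lem:gaussian_tail} applies to each component: a draw $X$ from the $j$th Gaussian satisfies $\|X - a_j\|_2 \le c \sqrt{\Tr(\Sigma_j)} \le c\sigma$ outside an event of probability at most $e^{-c^2/4} = \sigma^p$. Put $\Delta := c\sigma = 2\sigma\sqrt{p\log\frac1\sigma}$; then $(2\Delta)^2 = 16\sigma^2 p\log\frac1\sigma$, so the admissible range $n \le m(16\sigma^2 p\log\frac1\sigma)^{-p}$ is precisely $n \le m(2\Delta)^{-2p}$.

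Next I would convert the tail bound into a covering bound. Taking the weighted union over components, $S := \bigcup_{j=1}^m B(a_j, \Delta)$ has $\mu(S) \ge 1 - \sigma^p$, and $S$ is covered by $m$ closed $\ell_2$-balls of diameter $2\Delta$, so $\cN_{2\Delta}(\mu, \sigma^p) \le m$. Since $p\log\frac1\sigma \ge 25/4$ forces $2\Delta = 4\sigma\sqrt{p\log\frac1\sigma} \ge \sigma$, we have $(2\Delta)^p \ge \sigma^p$, and monotonicity of $\cN_\ep(\mu,\tau)$ in $\tau$ gives $\cN_{2\Delta}(\mu, (2\Delta)^p) \le \cN_{2\Delta}(\mu, \sigma^p) \le m$. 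Feeding this into Proposition~\ref{prop:sqrt_n_bound} exactly as in the proof of Proposition~\ref{prop:clusterable}, with the choice $\ep = 2\Delta$,
\begin{equation*}
m_n = \inf_{\ep > 0}\max\{\cN_\ep(\mu, \ep^p),\, n\ep^{2p}\} \le \max\{\cN_{2\Delta}(\mu, (2\Delta)^p),\, n(2\Delta)^{2p}\} \le \max\{m,\, n(2\Delta)^{2p}\} = m,
\end{equation*}
where the last equality uses $n \le m(2\Delta)^{-2p}$. Proposition~\ref{prop:sqrt_n_bound} then gives $\E[W_p^p(\mu, \hat\mu_n)] \le (9^p+3)\sqrt{m_n/n} \le (9^p+3)\sqrt{m/n}$.

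I do not expect any real obstacle. The entire content lies in the choice $c = \sqrt{4p\log\frac1\sigma}$, which simultaneously makes the Gaussian tail probability equal $\sigma^p$ (small enough to be discarded, since $\sigma^p \le (2\Delta)^p$) and makes $(2\Delta)^{2p}$ match the stated threshold. The one point needing care is the reduction itself: because $\mu$ has unbounded support one cannot quote the clusterable definition verbatim, but Proposition~\ref{prop:sqrt_n_bound} is phrased in terms of $\cN_\ep(\mu, \ep^p)$ rather than $\cN_\ep(\supp\mu)$ precisely so that Gaussian-type tails can be absorbed into the discarded mass.
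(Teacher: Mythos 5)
Your proposal is correct and follows essentially the same route as the paper: the same choice $c = 2\sqrt{p\log\frac{1}{\sigma}}$, the same use of Lemma~\ref{lem:gaussian_tail} to place all but $\sigma^p$ of the mass in $m$ balls of radius $c\sigma$, the same monotonicity step $\cN_{2c\sigma}(\mu,(2c\sigma)^p) \le \cN_{2c\sigma}(\mu,\sigma^p) \le m$, and the same conclusion via $m_n \le m$ and Proposition~\ref{prop:sqrt_n_bound}. Your closing observation about why one works with $\cN_\ep(\mu,\ep^p)$ rather than the clusterable definition is exactly the point of the paper's argument as well.
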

Since the measure $\mu$ is absolutely continuous with respect to the Lebesgue measure, if $d > 2p$, then asymptotically we have
\begin{equation*}
\E[W_p^p(\mu, \hat \mu_n)] \gtrsim n^{-p/d}\,,
\end{equation*}
which is slower than the $n^{-1/2}$ rate obtainable for small $n$.
\begin{proof}

Let $c = 2 \sqrt{p \log \frac 1 \sigma}$, which by assumption is at least $5$.
By Lemma~\ref{lem:gaussian_tail}, all but at most $e^{-c^2/4} = \sigma^{p}$ mass lies within balls of radius $c \sigma$ around the mixture centers.
Therefore $\cN_{2 c \sigma}(\mu, (2 c \sigma)^p) \leq \cN_{2 c \sigma}(\mu, \sigma^p) \leq m$.
If $n \leq m (16 \sigma^2 p \log \frac 1 \sigma)^{-p} = m (2 c \sigma)^{-2p}$, then we obtain
\begin{equation*}
m_n = \inf_{\ep > 0} \max\{\cN_\ep(\mu, \ep^p), n \ep^{2p}\} \leq \max\{\cN_{2 c \sigma}(\mu, (2 c\sigma)^p), n (2 c \sigma)^{2p}\} \leq m\,,
\end{equation*}
and the claim follows from Proposition~\ref{prop:sqrt_n_bound}.
\end{proof}
\subsection{Approximately low-dimensional sets}
We now broaden considerably to the general case where $\mu$ is supported on an approximately low-dimensional set.

\begin{definition}[{See~\cite{Tal95}}]
For any $S \subseteq X$, the \emph{$\ep$-fattening} of $S$ is
\begin{equation*}
S_\ep := \{y: D(y, S) \leq \ep\}\,.
\end{equation*}
\end{definition}
If $S' \subset S_\ep$ for some $S$, then $S'$ is close to $S$ in the sense that every point of $S'$ is within $\ep$ of some point in $S$.
In particular, $S' \subset S_\ep$ if the Hausdorff distance between $S'$ and $S$ is at most $\ep$.

Measures supported on $S_\ep$ are ``close'' to measures supported on $S$, and if $S$ is low-dimensional, then we obtain correspondingly better finite-sample rates.

\begin{proposition}\label{prop:approx_low_dim}
Suppose $\supp(\mu) \subseteq S_\ep$ for some $\ep >0$ and set $S$ satisfying
\begin{equation*}
\cN_{\ep'}(S) \leq (3 \ep')^{-d}
\end{equation*}
for all $\ep' \leq 1/27$ and for some $d > 2p$.
Then for all $n \leq (3\ep)^{-d}$,
\begin{equation*}
\E[W_p^p(\mu, \hat \mu_n)] \leq C_1 n^{-p/d}\,,
\end{equation*}
where
\begin{equation*}
C_1 = 27^p\left(2+\frac{1}{3^{\frac{d}{2} - p} -1}\right)\,.
\end{equation*}
\end{proposition}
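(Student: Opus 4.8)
The plan is to reduce the statement to the \emph{proof} of Proposition~\ref{prop:finite_sample}: I will translate the covering bound on $S$ into a covering bound on $\supp(\mu)$ at scales above $\ep$, and then run that proof with the free scale parameter set to $n^{-1/d}$. First I would dispose of small $n$: if $n \le 27^d$, then $n^{-p/d} \ge 27^{-p}$, hence $C_1 n^{-p/d} \ge 2 + \frac{1}{3^{d/2-p}-1} > 1 \ge \diam(X)^p \ge W_p^p(\mu,\hat\mu_n)$ and the bound is trivial; so assume $n > 27^d$, which gives $n^{-1/d} < 1/27$.

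Next I would prove the following fattening estimate: under the hypotheses, $\cN_\eta(\mu) \le \eta^{-d}$ for every $\eta \in [3\ep, 1/9]$. Indeed, applying the hypothesis with $\ep' = \eta/3 \le 1/27$ covers $S$ by at most $(3\cdot \eta/3)^{-d} = \eta^{-d}$ closed balls of radius $\eta/6$; since $\ep \le \eta/3$, enlarging each of these balls by $\ep$ produces a ball of radius at most $\eta/6 + \eta/3 = \eta/2$, hence of diameter at most $\eta$, and these cover $S_\ep \supseteq \supp(\mu)$. Consequently $d_\eta(\mu,\tau) \le d_\eta(\mu) = \frac{\log \cN_\eta(\mu)}{-\log\eta} \le d$ for every $\tau \ge 0$ and every $\eta \in [3\ep,1/9]$.

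Now set $\eta_0 := n^{-1/d}$. The hypothesis $n \le (3\ep)^{-d}$ is precisely $\eta_0 \ge 3\ep$, and the reduction above gives $\eta_0 < 1/27 < 1/9$; so the fattening estimate applies on all of $[\eta_0, 1/9]$, yielding $d_{\ge \eta_0}(\mu,\eta_0^p) = \sup_{\eta\in[\eta_0,1/9]} d_\eta(\mu,\eta_0^p) \le d$. Since also $\frac{\log n}{-\log\eta_0} = d$ by the choice of $\eta_0$, the quantity $\max\{d_{\ge\eta_0}(\mu,\eta_0^p), \frac{\log n}{-\log\eta_0}\}$ equals exactly $d$, which exceeds $2p$ by assumption. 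Feeding $\eta_0$ in as the free scale parameter in the proof of Proposition~\ref{prop:finite_sample} -- whose level-$k$ scales $3^{-(k+1)}$, for $1\le k\le k^* = \lfloor \frac{\log n}{d\log 3}\rfloor - 2$, all lie in $[\eta_0,1/9]$, where the relevant dimensions are bounded by $d$ -- then produces exactly $\E[W_p^p(\mu,\hat\mu_n)] \le 27^p\big(2 + \frac{1}{3^{d/2-p}-1}\big) n^{-p/d}$, which is the claim.

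The step I expect to require the most care is this last reduction. One cannot quote Proposition~\ref{prop:finite_sample} as a black box, since its conclusion is phrased via the infimum $d_n$, which here can be strictly (indeed arbitrarily much) smaller than $d$ -- for a single Dirac mass $d_n = 0$ -- and the constant $C_1(d_n)$ then degrades; one has to re-enter the proof and substitute $\eta_0 = n^{-1/d}$, at which scale the relevant maximum is $d$ exactly. The remainder is bookkeeping: the constant $3$ in the hypotheses $\cN_{\ep'}(S) \le (3\ep')^{-d}$ and $n \le (3\ep)^{-d}$ is chosen precisely so that the threshold $\eta \ge 3\ep$ of the fattening estimate lines up with $\eta_0 = n^{-1/d}$, and one must check that the scales invoked inside the proof of Proposition~\ref{prop:finite_sample} never drop below $\eta_0$. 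Alternatively, one can avoid referring to Proposition~\ref{prop:finite_sample} at all and argue directly: use Proposition~\ref{prop:covering_to_dyadic} to build a dyadic partition of $S_\ep$ with at most $\cN_{3^{-(k+1)}}(S_\ep)$ cells meeting $\supp(\mu)$ at level $k$, plug it into Propositions~\ref{prop:wp_dyadic_bound} and~\ref{prop:expectation_estimate} (no ``lossy'' term appears, since $\mu(S_\ep)=1$), bound $\cN_{3^{-(k+1)}}(S_\ep)$ by the fattening estimate, and sum the resulting geometric series using $d > 2p$.
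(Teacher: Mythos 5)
Your proof is correct and takes essentially the same route as the paper's: a fattening estimate showing $\cN_{3\ep'}(\mu) \le \cN_{3\ep'}(S_\ep) \le \cN_{\ep'}(S) \le (3\ep')^{-d}$ for $\ep' \ge \ep$, followed by the finite-sample machinery behind Proposition~\ref{prop:finite_sample} with the scale parameter chosen so that the relevant maximum equals $d$. The paper itself simply notes $d_n \le \max\{d_{\ge 3\ep}(\mu), \tfrac{\log n}{-\log 3\ep}\} \le d$ and cites Proposition~\ref{prop:finite_sample} directly, so your extra care in re-entering that proof at scale $n^{-1/d}$ (rather than quoting the statement, whose hypothesis $d_n > 2p$ and constant $C_1(d_n)$ do not formally deliver the claimed bound when $d_n < d$) is a more rigorous rendering of the same argument, not a different one.
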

In other words, $\hat \mu_n$ converges to $\mu$ at the $n^{-p/d}$ rate until $n$ is exponentially large in $d$.
In particular, if $\mu$ is absolutely continuous with respect to the Lebesgue measure on $\RR^{s}$ where $s \gg d$, then $\hat \mu_n$ converges to $\mu$ much faster initially (at the rate $n^{-p/d}$) than it does in the limit (at the rate $n^{-p/s}$).

\begin{proof}
Given any covering of $S$ by balls $B_1, \dots, B_m$ of diameter $\ep'$, the $\ep$-fattenings $(B_1)_\ep, \dots, (B_m)_\ep$ provide a covering of $S_\ep$ by balls of diameter $\ep' + 2 \ep$.
This implies for all $\ep' \geq \ep$ that
\begin{equation*}
\cN_{3 \ep'}(\mu) \leq \cN_{3 \ep'}(S_\ep) \leq \cN_{\ep' + 2 \ep}(S_\ep) \leq \cN_{\ep'}(S) \leq (3 \ep')^{-s}\,,
\end{equation*}
and hence that
\begin{equation*}
d_{\geq 3 \ep}(S_\ep) \leq s\,.
\end{equation*}
Therefore, if $n \leq (3 \ep)^{-d}$, then
\begin{equation*}
d_n \leq \max\left\{d_{\geq 3 \ep}(\mu), \frac{\log n}{- \log 3 \ep}\right\} \leq d\,.
\end{equation*}
The claim follows from Proposition~\ref{prop:finite_sample}.
\end{proof}

We can also relax the requirement that $\supp(\mu) \subseteq S_\ep$ to the statement that $\mu$ is concentrated near $S$.
\begin{proposition}\label{prop:concentrate_near_s}
Let $S$ be a set satisfying
\begin{equation*}
\cN_{\ep}(S) \leq (3 \ep)^{-d}
\end{equation*}
for all $\ep \leq 1/27$, for some $d > 2p$.
Suppose there exists a positive constant $\sigma$ such that $\mu$ satisfies
\begin{equation*}
\mu(S_\ep) \geq 1 - e^{-\ep^2/2 \sigma^2}
\end{equation*}
for all $\ep > 0$.
If $p \log \frac 1 \sigma \geq \frac{1}{18}$, then for all $n \leq \left(18p \sigma^2 \log \frac 1 \sigma\right)^{-d/2}$,
\begin{equation*}
\E[W_p^p(\mu, \hat \mu_n)] \leq C_1 n^{-p/d}\,,
\end{equation*}
where
\begin{equation*}
C_1 = 27^p\left(2+\frac{1}{3^{\frac{d}{2} - p} -1}\right)\,.
\end{equation*}
\end{proposition}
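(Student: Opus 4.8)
The plan is to reduce the claim to Proposition~\ref{prop:finite_sample}, exactly along the lines of Proposition~\ref{prop:approx_low_dim}: one exhibits a single threshold scale $\ep$ at which $\max\{d_{\ge\ep}(\mu,\ep^p),\,\frac{\log n}{-\log\ep}\}\le d$ and then invokes Proposition~\ref{prop:finite_sample} (the hypothesis $d>2p$ is available). The natural choice is $\ep=n^{-1/d}$, for which the second term equals $d$; the whole burden is therefore to show
\begin{equation*}
d_{\ge n^{-1/d}}(\mu,\,n^{-p/d})\le d\,.
\end{equation*}
The one new feature compared with Proposition~\ref{prop:approx_low_dim} is that $\mu$ is only \emph{concentrated} near $S$ rather than supported on a fattening of it, so the concentration hypothesis has to be spent to pay for the excluded mass $\tau=\ep^p$ appearing in the $(\ep,\tau)$-covering number.

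First I would clear away the degenerate regimes. Since $\diam(X)\le 1$ we always have $W_p^p(\mu,\hat\mu_n)\le 1$, while $C_1 n^{-p/d}\ge 2\cdot 27^p\cdot 27^{-p}=2$ as soon as $n\le 27^d$; and if $18p\sigma^2\log\frac1\sigma\ge 1$, then the hypothesis on $n$ forces $n=1$, which is again trivial. So I may assume $n>27^d$, hence $\ep:=n^{-1/d}<1/27$, and $0<18p\sigma^2\log\frac1\sigma<1$.

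The core step is the covering estimate. Put $\rho:=\sigma\sqrt{2p(\log n)/d}$, which is chosen precisely so that $e^{-\rho^2/2\sigma^2}=n^{-p/d}=\ep^p$; then the concentration hypothesis gives $\mu(S_\rho)\ge 1-\ep^p$. For a fixed $\ep'\in[\ep,1/9]$, a covering of $S$ by $\cN_{\ep'/3}(S)\le(\ep')^{-d}$ balls of diameter $\ep'/3$ (admissible since $\ep'/3\le 1/27$) turns, after replacing each ball by its $\rho$-fattening, into a covering of $S_\rho$ by balls of diameter $\ep'/3+2\rho$, which is at most $\ep'$ provided $\rho\le\ep'/3$. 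Hence $\cN_{\ep'}(\mu,\ep^p)\le\cN_{\ep'}(S_\rho)\le\cN_{\ep'/3}(S)\le(\ep')^{-d}$, so $d_{\ep'}(\mu,\ep^p)\le d$; taking the supremum over $\ep'\in[\ep,1/9]$ gives the displayed inequality.

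The step I expect to be the main obstacle is checking the condition $\rho\le\ep'/3$ for all $\ep'\in[\ep,1/9]$, which (since $\ep'\ge\ep$) amounts to $\rho\le n^{-1/d}/3$, i.e.\ $18p\sigma^2(\log n)/d\le n^{-2/d}$; this is where the two quantitative hypotheses must be dovetailed. From $n\le(18p\sigma^2\log\tfrac1\sigma)^{-d/2}$ one reads off both $n^{-2/d}\ge 18p\sigma^2\log\tfrac1\sigma$ and $(\log n)/d\le\tfrac12\log\frac{1}{18p\sigma^2\log(1/\sigma)}$, whereas $p\log\tfrac1\sigma\ge\tfrac1{18}$ is precisely $18p\sigma^2\log\tfrac1\sigma\ge\sigma^2$, equivalently $\log\frac{1}{18p\sigma^2\log(1/\sigma)}\le 2\log\tfrac1\sigma$; combining these, $(\log n)/d\le\log\tfrac1\sigma$ and therefore $18p\sigma^2(\log n)/d\le 18p\sigma^2\log\tfrac1\sigma\le n^{-2/d}$, as needed. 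Once $d_{\ge n^{-1/d}}(\mu,n^{-p/d})\le d$ is established, invoking Proposition~\ref{prop:finite_sample} at the scale $\ep=n^{-1/d}$ (equivalently, running its proof with this choice, for which the dimension parameter there equals $d>2p$) yields $\E[W_p^p(\mu,\hat\mu_n)]\le C_1 n^{-p/d}$ with the stated $C_1$. All of the genuine geometry here --- fatten the truly $d$-dimensional set $S$, and charge the discarded mass to the Gaussian-type tail --- is identical to Proposition~\ref{prop:approx_low_dim}; only the mass accounting differs.
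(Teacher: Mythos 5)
Your proposal is correct and follows essentially the paper's own route: reduce to Proposition~\ref{prop:finite_sample} by fattening $S$, charging the excluded mass to the tail hypothesis, and checking that the covering scale dominates the fattening radius. The only difference is cosmetic --- you take an $n$-dependent radius $\rho=\sigma\sqrt{2p(\log n)/d}$ with threshold $\ep=n^{-1/d}$, whereas the paper fixes $\ep=\sqrt{2p\sigma^2\log\frac1\sigma}$ (so the tail equals $\sigma^p\le(3\ep)^p$) and uses $n\le(3\ep)^{-d}$ directly; your verification that $\rho\le n^{-1/d}/3$ is a correct rearrangement of the same two quantitative hypotheses.
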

In other words, we again get the fast $n^{-p/d}$ rate until $n$ is of order approximately $\sigma^{-d}$.

\begin{proof}
Let $\ep = \sqrt{2 p \sigma^2 \log \frac 1 \sigma}$.
As in the proof of Proposition~\ref{prop:approx_low_dim}, the assumptions imply that
\begin{equation*}
d_{\geq 3 \ep}(S_\ep) \leq d\,.
\end{equation*}
Since
\begin{equation*}
\mu(S_\ep) \geq 1 - e^{-\ep^2/2 \sigma^2} = 1 - \sigma^p \geq 1 - (3 \ep)^p\,,
\end{equation*}
we conclude that as long as $n \leq \left(18p \sigma^2 \log \frac 1 \sigma\right)^{-d/2}$, then
\begin{equation*}
d_n \leq \max\left\{d_{\geq 3 \ep}(\mu, (3\ep)^p), \frac{\log n}{- \log 3 \ep}\right\} \leq d\,,
\end{equation*}
and the claim follows from Proposition~\ref{prop:finite_sample}.
\end{proof}

The condition appearing in Proposition~\ref{prop:concentrate_near_s} is notable because it resembles the guarantee of an isoperimetric inequality~\cite{ledoux2005concentration}.
Such inequalities are an important topic in modern geometric probability theory, and possess a close connection to the $W_1$ distance~\cite{BobGoe99}.

It is a striking fact about such inequalities that they are intimately connected to the concentration properties of $1$-Lipschitz functions.
\begin{proposition}[{See~\cite[Proposition~1.3]{ledoux2005concentration}}]\label{prop:lip_implies_iso}
Given a function $f:X \to \RR$, say that $m_f$ is a \emph{median} of $f$ if
\begin{equation*}
\p[f(X) \geq m_f] \geq 1/2 \quad \text{and} \quad \p[f(X) \leq m_f] \geq 1/2\,.
\end{equation*}
If for all $1$-Lipschitz functions $f: X \to \RR$ and medians $m_f$,
\begin{equation}\label{eqn:lipschitz_concentration}
\p[f(X) \geq m_f + t] \leq e^{-t^2/2\sigma^2}\,,
\end{equation}
then for any set $A$ with $\mu(A) \geq 1/2$,
\begin{equation}\label{eqn:isoperimetry}
\mu(A_\ep) \geq 1 - e^{-\ep^2/2 \sigma^2}\,.
\end{equation}
Conversely, if~\eqref{eqn:isoperimetry} holds for all sets $A$ with $\mu(A) \geq 1/2$, then~\eqref{eqn:lipschitz_concentration} holds for any $1$-Lipschitz function $f$ with median $m_f$.
\end{proposition}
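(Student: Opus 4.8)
The plan is to prove the two implications of Proposition~\ref{prop:lip_implies_iso} separately, using only the definitions of median and $\ep$-fattening. For the forward direction, suppose the Lipschitz concentration bound~\eqref{eqn:lipschitz_concentration} holds, and let $A$ be any set with $\mu(A) \geq 1/2$. The key idea is to apply the hypothesis to the function $f(x) := D(x, A)$, which is $1$-Lipschitz by the triangle inequality. Since $\mu(A) \geq 1/2$ and $f$ vanishes on $A$, the value $0$ is a median of $f$: indeed $\p[f(X) \leq 0] \geq \mu(A) \geq 1/2$, and $\p[f(X) \geq 0] = 1 \geq 1/2$. Hence $m_f = 0$ is a valid choice of median, and~\eqref{eqn:lipschitz_concentration} with $t = \ep$ gives $\p[D(X, A) \geq \ep] \leq e^{-\ep^2/2\sigma^2}$. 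Since the complement of $A_\ep$ is exactly $\{x : D(x, A) > \ep\} \subseteq \{x : D(x, A) \geq \ep\}$, this yields $\mu(A_\ep) \geq 1 - e^{-\ep^2/2\sigma^2}$, which is~\eqref{eqn:isoperimetry}.

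For the converse, suppose~\eqref{eqn:isoperimetry} holds for all sets of measure at least $1/2$, and let $f$ be $1$-Lipschitz with median $m_f$. Apply the isoperimetric hypothesis to the set $A := \{x : f(x) \leq m_f\}$, which has $\mu(A) \geq 1/2$ by the definition of a median. Because $f$ is $1$-Lipschitz, any point $y \in A_\ep$ lies within distance $\ep$ of some $x$ with $f(x) \leq m_f$, so $f(y) \leq f(x) + D(x,y) \leq m_f + \ep$; that is, $A_\ep \subseteq \{x : f(x) \leq m_f + \ep\}$. Therefore $\p[f(X) \leq m_f + \ep] \geq \mu(A_\ep) \geq 1 - e^{-\ep^2/2\sigma^2}$, and taking complements gives $\p[f(X) \geq m_f + \ep] \leq \p[f(X) > m_f + \ep] \leq e^{-\ep^2/2\sigma^2}$ after replacing $\ep$ by $t$, which is~\eqref{eqn:lipschitz_concentration}. (A small amount of care is needed at the boundary where $f$ equals $m_f + t$ exactly, but this only strengthens the inequality.)

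I expect no serious obstacle here; this is a classical equivalence (essentially Proposition~1.3 of~\cite{ledoux2005concentration}), and the only subtlety is the bookkeeping around strict versus non-strict inequalities in the events $\{D(x,A) \geq \ep\}$ versus $\{D(x,A) > \ep\}$ and in the definition of the median. The mild point worth stating explicitly is that $D(\cdot, A)$ is $1$-Lipschitz and has $0$ as a median whenever $\mu(A) \geq 1/2$, which is what makes the distance function the right test function to feed into the hypothesis; the converse direction is the symmetric observation that super-level (or sub-level) sets of a Lipschitz function fatten into larger sub-level sets.
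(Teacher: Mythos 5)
Your argument is correct, and it is the classical proof: the paper itself does not prove this proposition at all, but simply cites it to Ledoux (Proposition~1.3 of~\cite{ledoux2005concentration}), and the argument given there is exactly yours — test the concentration hypothesis on the $1$-Lipschitz function $D(\cdot,A)$, which has $0$ as a median when $\mu(A)\geq 1/2$, and conversely fatten the sublevel set $A=\{f\leq m_f\}$, using that $A_\ep\subseteq\{f\leq m_f+\ep\}$ since $f(y)\leq m_f+D(y,A)$ for $y\in A_\ep$. So there is nothing to compare against in the paper beyond the citation; your write-up fills in the standard proof faithfully.

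One small repair in the converse: the displayed chain $\p[f(X)\geq m_f+\ep]\leq \p[f(X)>m_f+\ep]$ has the inequality backwards, since $\{f\geq m_f+\ep\}\supseteq\{f>m_f+\ep\}$. What your argument actually yields is $\p[f(X)>m_f+t']\leq e^{-t'^2/2\sigma^2}$ for every $t'>0$; to get the stated non-strict form, note that for any $t'<t$ one has $\{f\geq m_f+t\}\subseteq\{f>m_f+t'\}$, so $\p[f(X)\geq m_f+t]\leq e^{-t'^2/2\sigma^2}$, and letting $t'\uparrow t$ gives \eqref{eqn:lipschitz_concentration} by continuity of the right-hand side. (Also, a point $y\in A_\ep$ need not be within distance $\ep$ of an actual point of $A$ if the infimum is not attained, but this is immaterial: $f(y)\leq f(x)+D(x,y)$ for all $x\in A$ already gives $f(y)\leq m_f+D(y,A)\leq m_f+\ep$.) With these cosmetic fixes the proof is complete.
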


The conditions of Proposition~\ref{prop:lip_implies_iso} have been used recently to show concentration bounds for the $W_1$ distance~\cite{BolGuiVil07}.
Here, we show that if $\mu$ possesses the property described above, then $\hat \mu_n$ enjoys a fast rate of convergence to $\mu$ for any $p$, as long as $\mu$ assigns a constant fraction of mass to a low-dimensional set.

\begin{proposition}
Suppose that $\mu$ satisfies either of the two equivalent conditions of Proposition~\ref{prop:lip_implies_iso} with some $\sigma$ satisfying $p \log \frac 1 \sigma \geq \frac{1}{18}$.
If $S$ is a set satisfying
\begin{equation*}
\cN_{\ep}(S) \leq (3 \ep)^{-d}
\end{equation*}
for all $\ep$, for some $d > 2p$ and $\mu(S) \geq 1/2$, then for all $n \leq \left(18p \sigma^2 \log \frac 1 \sigma\right)^{-d/2}$,
\begin{equation*}
\E[W_p^p(\mu, \hat \mu_n)] \leq C_1 n^{-p/d}\,,
\end{equation*}
where
\begin{equation*}
C_1 = 27^p\left(2+\frac{1}{3^{\frac{d}{2} - p} -1}\right)\,.
\end{equation*}
\end{proposition}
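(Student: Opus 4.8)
The plan is to reduce this statement directly to Proposition~\ref{prop:concentrate_near_s}, whose proof in turn invokes Proposition~\ref{prop:finite_sample}. The only gap to fill is the passage from the hypothesis ``$\mu$ satisfies the concentration property of Proposition~\ref{prop:lip_implies_iso}'' together with ``$\mu(S) \ge 1/2$'' to the hypothesis ``$\mu(S_\ep) \ge 1 - e^{-\ep^2/2\sigma^2}$ for all $\ep > 0$'' required by Proposition~\ref{prop:concentrate_near_s}. Once that is established, the conclusion is immediate.

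First I would apply the second (converse) direction of Proposition~\ref{prop:lip_implies_iso}: the hypothesis that $\mu$ satisfies~\eqref{eqn:lipschitz_concentration} (either directly, or via~\eqref{eqn:isoperimetry} which is one of the two equivalent conditions) yields that for any set $A$ with $\mu(A) \ge 1/2$,
\begin{equation*}
\mu(A_\ep) \ge 1 - e^{-\ep^2/2\sigma^2}\,.
\end{equation*}
Applying this with $A = S$, which satisfies $\mu(S) \ge 1/2$ by assumption, gives exactly
\begin{equation*}
\mu(S_\ep) \ge 1 - e^{-\ep^2/2\sigma^2}
\end{equation*}
for all $\ep > 0$.

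Next I would check that the remaining hypotheses of Proposition~\ref{prop:concentrate_near_s} hold. The covering bound $\cN_\ep(S) \le (3\ep)^{-d}$ is assumed here for all $\ep$, which in particular covers the range $\ep \le 1/27$ needed there; the condition $d > 2p$ is assumed; and $p \log \frac 1 \sigma \ge \frac{1}{18}$ is assumed. Thus Proposition~\ref{prop:concentrate_near_s} applies verbatim, and for all $n \le \left(18 p \sigma^2 \log \frac 1 \sigma\right)^{-d/2}$ we obtain
\begin{equation*}
\E[W_p^p(\mu, \hat \mu_n)] \le C_1 n^{-p/d}\,, \qquad C_1 = 27^p\left(2 + \frac{1}{3^{\frac{d}{2} - p} - 1}\right)\,,
\end{equation*}
which is the claim. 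I do not anticipate any real obstacle: the content is entirely in Propositions~\ref{prop:finite_sample}, \ref{prop:concentrate_near_s}, and~\ref{prop:lip_implies_iso}, and this proposition simply packages them. The only point requiring a moment's care is making sure that the ``either of the two equivalent conditions'' phrasing is handled correctly --- but since Proposition~\ref{prop:lip_implies_iso} asserts the equivalence of~\eqref{eqn:lipschitz_concentration} and~\eqref{eqn:isoperimetry} (the latter for all $A$ with $\mu(A) \ge 1/2$), either hypothesis delivers~\eqref{eqn:isoperimetry}, hence the fattening bound for $S$, and the argument goes through unchanged.
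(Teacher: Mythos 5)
Your proposal is correct and follows exactly the paper's argument, which simply combines Propositions~\ref{prop:concentrate_near_s} and~\ref{prop:lip_implies_iso}; applying the isoperimetric form of Proposition~\ref{prop:lip_implies_iso} to $A = S$ (using $\mu(S) \geq 1/2$) supplies the fattening hypothesis $\mu(S_\ep) \geq 1 - e^{-\ep^2/2\sigma^2}$, and the remaining hypotheses of Proposition~\ref{prop:concentrate_near_s} hold verbatim.
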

\begin{proof}
Combine Propositions~\ref{prop:concentrate_near_s} and~\ref{prop:lip_implies_iso}.
\end{proof}

\section{Concentration}\label{sec:concentration}
In addition to proving bounds on the expected value of the quantity $W_p^p(\mu, \hat \mu_n)$, we also show that it concentrates well around its expectation.
Previous work~\cite{BolGuiVil07,Boi11} has sought to obtain tail bounds of the form
\begin{equation*}
\p[W_p^p(\mu, \hat \mu_n) \geq t] \leq \psi_n(t)\,,
\end{equation*}
where $\psi_n(t)$ is some function exhibiting subgaussian decay.
The results of~\cite{BolGuiVil07} appear to obtain this rate, but the constants involved depend on $n$ and the ambient dimension of the space in a way that makes the results difficult to interpret.

We follow a different approach, which more clearly emphasizes the dependence of the tail on $n$ and the dimension.
The results of Sections~\ref{sec:asymptotic} and~\ref{sec:finite}, above, yield bounds on the expected value $\E[W_p^p(\mu, \hat \mu_n)]$.
As we have seen, the convergence of this quantity to~$0$ may be slow when the dimension is large.
On the other hand, we show below that, as long as $X$ is bounded, the quantity~$W_p^p(\mu, \hat \mu_n)$ concentrates well around its expectation independent of the dimension.
The argument is standard~\cite{Tal92} and is significantly easier to obtain than the above bounds on the expected value.

We require the following dual formulation~\cite{RusRac90,Rus91}.
\begin{definition}
Given a bounded continuous function $f: X \to \RR$, the \emph{$c$-transform} of $f$ (with respect to $D(\cdot, \cdot)^p$) is the function $f^c: X \to \RR$ defined by
\begin{equation*}
f^c(y) = \sup_{x \in X} (f(x) - D(x, y)^p)\,.
\end{equation*}
\end{definition}
The following claims are standard, and we provide a proof in Appendix~\ref{appendix:omitted} for completeness.
\begin{proposition}[Kantorovich duality]\label{prop:duality}
Given any pair of probability measures $\mu$ and $\nu$ on $X$ and any $p \in [1, \infty)$, the following duality holds:
\begin{equation}\label{eqn:dual_def}
W_p^p(\mu, \nu) = \sup_{f \in \cC_b(X)} \E_\mu f - \E_\nu f^c\,,
\end{equation}
where the supremum is taken over all bounded continuous functions on $X$ and $f^c$ is the $c$-transform of $f$ with respect to $D(\cdot, \cdot)^p$.
Moreover, if $\diam(X) \leq 1$, then we can take $0 \leq f(x) \leq 1$ for all $x \in X$.
\end{proposition}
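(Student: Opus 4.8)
The plan is to establish the two inequalities in~\eqref{eqn:dual_def} separately and then deduce the normalization claim. For the ``easy'' direction $\sup_{f \in \cC_b(X)}(\E_\mu f - \E_\nu f^c) \le W_p^p(\mu,\nu)$, I would start from the pointwise inequality $f(x) - f^c(y) \le D(x,y)^p$, which is immediate from $f^c(y) = \sup_{x'}\bigl(f(x') - D(x',y)^p\bigr) \ge f(x) - D(x,y)^p$. Integrating this against an arbitrary coupling $\gamma \in \cC(\mu,\nu)$ and using that the marginals of $\gamma$ are $\mu$ and $\nu$ gives $\E_\mu f - \E_\nu f^c = \int\bigl(f(x) - f^c(y)\bigr)\,\mathrm d\gamma \le \int D(x,y)^p\,\mathrm d\gamma$; here $f^c$ is bounded and continuous---in fact $p$-Lipschitz, since $D(\cdot,\cdot)^p$ is $p$-Lipschitz in each argument when $\diam(X) \le 1$---hence integrable. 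Taking the infimum over $\gamma$ and then the supremum over $f$ proves this direction.

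For the reverse inequality I would invoke the classical Kantorovich duality theorem for a bounded continuous cost, which applies since $X$ is compact Polish and $c(x,y) = D(x,y)^p$ is continuous with $c \le 1$ (see~\cite{RusRac90,Rus91} or~\cite[Theorem~5.10]{Vil09}), in the two-function form $W_p^p(\mu,\nu) = \sup\bigl\{\int f\,\mathrm d\mu + \int g\,\mathrm d\nu : f,g \in \cC_b(X),\ f(x)+g(y) \le D(x,y)^p\bigr\}$. I would then reduce this to~\eqref{eqn:dual_def}: for fixed $f$, the largest admissible choice of $g$ is $g(y) = \inf_x\bigl(D(x,y)^p - f(x)\bigr) = -f^c(y)$, and replacing $g$ by $-f^c$ keeps the pair admissible while only increasing the objective. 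Hence the supremum over admissible pairs equals $\sup_f(\E_\mu f - \E_\nu f^c)$, which together with the easy direction gives equality.

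To obtain the normalization $0 \le f \le 1$, I would use two cost-free reductions. First, the objective is invariant under $f \mapsto f + a$, since then $f^c \mapsto f^c + a$, so additive shifts are free. Second, I would replace $f$ by its $c$-concave envelope $\tilde f(x) := \inf_y\bigl(D(x,y)^p + f^c(y)\bigr)$; one checks $\tilde f \ge f$, $(\tilde f)^c \le f^c$, and that $\tilde f$ is $p$-Lipschitz as an infimum of $p$-Lipschitz functions, so $\tilde f \in \cC_b(X)$ and the objective does not decrease. For such $\tilde f$ the bounds $0 \le D(x,y)^p \le 1$ force $\min_y f^c(y) \le \tilde f(x) \le \min_y f^c(y) + 1$, so after shifting by $-\min_y f^c$ we get $0 \le \tilde f \le 1$; since any competitor can be replaced this way, restricting the supremum to $0 \le f \le 1$ leaves its value unchanged.

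The only substantive step is the strong-duality inequality. If one wants a fully self-contained proof rather than citing the classical theorem, the main obstacle is the standard one: establishing the min--max identity via a Hahn--Banach / Fenchel--Rockafellar separation argument in an infinite-dimensional space, combined with a tightness argument on the compact space $X$ to guarantee that the infimum over couplings is attained and that limits can be passed. By contrast, the $c$-transform reduction and the normalization are routine, the only point worth checking being that $f^c$ inherits (Lipschitz) continuity from the uniform continuity of $D(\cdot,\cdot)^p$ on the compact product $X \times X$.
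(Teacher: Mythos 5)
Your proof is correct, and its core is the same as the paper's: the identity~\eqref{eqn:dual_def} is ultimately delegated to the classical Kantorovich duality theorem (as in~\cite[Theorem~5.10]{Vil09}), with the easy direction and the reduction from the two-function form to the $c$-transform form spelled out explicitly (the paper simply cites the reference for both). Where you genuinely diverge is the normalization $0 \leq f \leq 1$. The paper uses the attainment part of Villani's theorem: it takes an optimal $f$ satisfying $f(x) = \inf_y \bigl(f^c(y) + D(x,y)^p\bigr)$, shifts it so that $\sup_x f(x) = 1$, deduces $f^c \geq 0$ from $\diam(X) \leq 1$, and then $f \geq 0$ from the infimum identity. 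You instead take an \emph{arbitrary} competitor $f$, replace it by its $c$-concave envelope $\tilde f(x) = \inf_y\bigl(D(x,y)^p + f^c(y)\bigr)$ --- checking $\tilde f \geq f$ and $(\tilde f)^c \leq f^c$ so the objective does not decrease --- and then use the additive-shift invariance together with $0 \leq D^p \leq 1$ to land in $[0,1]$. Your route has the advantage of not requiring existence of a maximizer (only the value of the supremum), so it is slightly more self-contained and robust; the paper's is shorter because it leans on attainment, which is available under the compact Polish hypotheses anyway. Both arguments are valid, and your checks that $f^c$ and $\tilde f$ are bounded ($p$-Lipschitz) continuous, which justify admissibility and integrability, are the right points to verify.
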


We then obtain a concentration result via a standard bounded difference argument.
\begin{proposition}\label{prop:high_probability}
For all $n \geq 0$ and $0 \leq p < \infty$,
\begin{equation*}
\p[W_p^p(\mu, \hat \mu_n) \geq \E W_p^p(\mu, \hat \mu_n) + t] \leq \exp\left(-2nt^2\right)\,.
\end{equation*}
\end{proposition}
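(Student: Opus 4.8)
The plan is to apply the bounded differences (McDiarmid) inequality to the function $(x_1, \dots, x_n) \mapsto W_p^p(\mu, \hat\mu_n)$, where $\hat\mu_n = \frac1n \sum_{i=1}^n \delta_{x_i}$. The only thing to verify is that changing a single sample point $x_j$ to $x_j'$ changes the value of $W_p^p(\mu, \hat\mu_n)$ by at most $1/n$; given this, McDiarmid yields the stated bound $\exp(-2nt^2)$ directly (with the sum of squared differences equal to $n \cdot (1/n)^2 = 1/n$).

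The key step is the bounded differences estimate, and for this I would use the dual formula of Proposition~\ref{prop:duality}. Let $\hat\mu_n$ and $\hat\mu_n'$ be two empirical measures differing only in that $\hat\mu_n$ uses $x_j$ and $\hat\mu_n'$ uses $x_j'$. Fix $f \in \cC_b(X)$ with $0 \leq f \leq 1$; by Proposition~\ref{prop:duality} it suffices to bound $(\E_\mu f - \E_{\hat\mu_n} f^c) - (\E_\mu f - \E_{\hat\mu_n'} f^c) = \E_{\hat\mu_n'} f^c - \E_{\hat\mu_n} f^c = \frac1n\big(f^c(x_j') - f^c(x_j)\big)$. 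Since $f^c(y) = \sup_x (f(x) - D(x,y)^p)$, taking $x = y$ gives $f^c(y) \geq f(y) \geq 0$, and using $D \leq 1$ (Assumption~\ref{assumption:diameter}, so $D^p \leq 1$) together with $f \leq 1$ gives $f^c(y) \leq 1$. Hence $f^c$ takes values in $[0,1]$, so $f^c(x_j') - f^c(x_j) \leq 1$, and therefore $W_p^p(\mu, \hat\mu_n') - W_p^p(\mu, \hat\mu_n) \leq 1/n$. Swapping the roles of $\hat\mu_n$ and $\hat\mu_n'$ gives the reverse inequality, so the two values differ by at most $1/n$.

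With the bounded differences constant $c_j = 1/n$ for each coordinate, McDiarmid's inequality gives
\begin{equation*}
\p\big[W_p^p(\mu, \hat\mu_n) \geq \E W_p^p(\mu, \hat\mu_n) + t\big] \leq \exp\left(-\frac{2t^2}{\sum_{j=1}^n c_j^2}\right) = \exp\left(-\frac{2t^2}{n \cdot (1/n)^2}\right) = \exp(-2nt^2)\,,
\end{equation*}
which is the claim. I expect no serious obstacle here: the only mild subtlety is confirming that the $c$-transform of a function bounded in $[0,1]$ is again bounded in $[0,1]$ when $\diam(X) \leq 1$, which is exactly the normalization recorded at the end of Proposition~\ref{prop:duality}, so this is immediate. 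One should also remark that the argument is symmetric in $t$, giving a matching lower-tail bound if desired, though only the upper tail is stated.
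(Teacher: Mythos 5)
Your proposal is correct and follows essentially the same route as the paper: Kantorovich duality (Proposition~\ref{prop:duality}) to exhibit $W_p^p(\mu,\hat\mu_n)$ as a supremum of empirical averages with a $1/n$ bounded-differences constant, then McDiarmid. The only cosmetic difference is that the paper swaps the roles of $\mu$ and $\hat\mu_n$ so the empirical measure sits with $f\in[0,1]$ directly, whereas you keep $\hat\mu_n$ with $f^c$ and verify $f^c\in[0,1]$; both observations are valid and yield the same constant.
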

\begin{proof}
Let $\hat \mu_n$ be the empirical distribution corresponding to the i.i.d. samples~$X_1, \dots, X_n \sim \mu$.
We abbreviate $W_p^p(\mu, \hat \mu_n)$ by $W$.
By Proposition~\ref{prop:duality}, we can write
\begin{equation*}
W = \sup_{0 \leq f \leq 1} \E_{\hat \mu_n} f - \E_\mu f^c\,,
\end{equation*}
or, writing $W$ explicitly as a function of $X_1, \dots X_n$,
\begin{equation*}
W(X_1, \dots, X_n) = \frac 1 n \sup_{0 \leq f \leq 1}  \sum_{i=1}^n f(X_i) - \E_\mu f^c\,.
\end{equation*}

For any $x_1, \dots, x_n, x_n' \in X$, we have
\begin{alignat*}{2}
W(x_1, \dots, x_n) - W(x_1, \dots, x'_n) & = &\frac 1 n \bigg\{&\sup_{0 \leq f \leq 1}  \sum_{i=1}^{n} (f(x_i) - \E_\mu f^c) \\
&&-&\sup_{0 \leq f'\leq 1 } \sum_{i=1}^{n-1} (f'(x_i) -  \E_\mu {f'}^c) \\
&&+& f'(x_n')- \E_\mu {f'}^c\bigg\} \\
& \leq &\frac 1 n &\sup_{0 \leq f \leq 1} f(x_n) - f(x_n') \leq \frac 1 n\,.
\end{alignat*}
Applying McDiarmid's inequality~\cite{McD89} yields the bound.
\end{proof}

\section{Applications}
In this section, we sketch two applications of our work to machine learning and statistics.

\subsection{Quadrature}\label{sec:quadrature}
Numerical integration, or \emph{quadrature}, refers to the technique of approximating integrals by finite sums for the purpose of evaluating them at low computational cost.
Given a measure $\mu$, the goal is to choose points $x_1, \dots, x_n \in X$ and weights $\alpha_1, \dots, \alpha_n \in \RR_+$ such that the approximation
\begin{equation*}
\int f(x) \mathrm{d}\mu(x) \approx \sum_{i=1}^n \alpha_i f(x_i)
\end{equation*}
is as good as possible for a wide class of functions $f$.
This problem possesses close connections to optimal quantization, since the points $x_1, \dots, x_n$ naturally serve as a finite approximation to the underlying measure~\cite{GraLus07,Pag98}.

When only a single function $f$ is considered, one can show that a Monte Carlo method which chooses $x_1, \dots, x_n$ i.i.d. from $\mu$ with uniform weights $\alpha_i = n^{-1}$ for $1 \leq i \leq n$ is asymptotically suboptimal~\cite{Nov88}.
However, our results show that if we require that the approximation hold over the class of Lipschitz functions $\mathrm{Lip}(X)$, then this simple Monte Carlo scheme is asymptotically optimal for a wide class of measures.
\begin{proposition}
Denote by $\mathrm{Lip}(X)$ the class of $1$-Lipschitz on $X$
If $\mu$ is a measure supported on a regular set of dimension $d \geq 2$ and $\mu \ll \cH^d$, then for any $s > d$,
\begin{equation*}
\E\sup_{f \in \mathrm{Lip}(X)}\left| \int f(x) \mathrm{d}\mu(x) - \frac 1 n \sum_{i=1}^n f(X_i)\right| \lesssim n^{-1/s}\,,
\end{equation*}
where $X_1, \dots, X_n \sim \mu$ are independent.
On the other hand, for any $t < d$, $x_1, \dots, x_n \in X$, and $\alpha_i, \dots, \alpha_n \in \RR_+$,
\begin{equation*}
\sup_{f \in \mathrm{Lip}(X)}\left| \int f(x) \mathrm{d}\mu(x) -  \sum_{i=1}^n \alpha_i f(x_i)\right| \gtrsim n^{-1/t}\,.
\end{equation*}
\end{proposition}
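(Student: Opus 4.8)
The plan is to recognize that, via the Kantorovich--Rubinstein duality~\eqref{eqn:w1_dual}, the quadrature error over the class $\mathrm{Lip}(X)$ against any weighted point mass $\sum_i \alpha_i \delta_{x_i}$ is exactly a $W_1$ distance, so that both halves of the Proposition collapse onto the asymptotic bounds of Theorem~\ref{thm:main_asymptotic} combined with the dimension identity $d_*(\mu) = d_1^*(\mu) = d$ supplied by Proposition~\ref{prop:regular}. The only structural hypothesis that needs checking at the outset is that $p = 1$ is admissible in Proposition~\ref{prop:regular}: since $d \geq 2$ we have $1 \in [1, d/2]$, so indeed $d_*(\mu) = d_1^*(\mu) = d$ whenever $\supp(\mu)$ is regular of dimension $d$ and $\mu \ll \cH^d$.

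For the upper bound I would first observe that, with $x_i = X_i$ and uniform weights $\alpha_i = n^{-1}$, the left-hand side is $\sup_{f \in \mathrm{Lip}(X)}|\int f\,\mathrm{d}\mu - \int f\,\mathrm{d}\hat\mu_n| = W_1(\mu, \hat\mu_n)$ by~\eqref{eqn:w1_dual}. Having established $d_1^*(\mu) = d$, I then apply the upper bound of Theorem~\ref{thm:main_asymptotic} (equivalently Corollary~\ref{cor:minkowski_upper_bound}) with $p = 1$ and any $s > d = d_1^*(\mu)$, which gives $\E[W_1(\mu, \hat\mu_n)] \lesssim n^{-1/s}$ with the expectation taken over $X_1, \dots, X_n \sim \mu$.

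For the lower bound I would fix $x_1, \dots, x_n \in X$ and $\alpha_1, \dots, \alpha_n \in \RR_+$ and first dispose of the degenerate case: since arbitrary constant functions are $1$-Lipschitz, if $\sum_i \alpha_i \neq 1$ the supremum on the left is $+\infty$ and the inequality is vacuous. Otherwise $\nu := \sum_i \alpha_i \delta_{x_i}$ is a probability measure supported on at most $n$ atoms, the left-hand side equals $W_1(\mu, \nu)$ by~\eqref{eqn:w1_dual}, and Corollary~\ref{cor:lower_bound} applied with $p = 1$ and any $t < d = d_*(\mu)$ gives $W_1(\mu, \nu) \gtrsim n^{-1/t}$, as desired.

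I do not expect a genuine obstacle here: the argument is essentially a dictionary translation between Lipschitz-quadrature error and $W_1$, and all the analytic work has already been done in Theorem~\ref{thm:main_asymptotic} and Proposition~\ref{prop:regular}. The only point that takes a moment's care is the normalization of the weights in the lower bound, which is handled cleanly by testing against constants, and the bookkeeping check that $d \geq 2$ is precisely what allows Proposition~\ref{prop:regular} to be invoked at $p = 1$.
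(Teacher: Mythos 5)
Your proposal is correct and follows the paper's own argument exactly: both reduce the quadrature error to $W_1$ via the duality~\eqref{eqn:w1_dual}, handle the $\sum_i \alpha_i \neq 1$ case by testing against large constants, and then invoke Corollary~\ref{cor:minkowski_upper_bound}, Corollary~\ref{cor:lower_bound}, and Proposition~\ref{prop:regular}. Your added check that $d \geq 2$ makes $p = 1$ admissible in Proposition~\ref{prop:regular} is a point the paper leaves implicit, but the route is the same.
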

\begin{proof}
Recalling~\eqref{eqn:w1_dual}, we immediately see that the first claim corresponds to the fact that $\E W_1(\mu, \hat \mu_n) \lesssim n^{-1/s}$, which follows from Corollary~\ref{cor:minkowski_upper_bound} and Proposition~\ref{prop:regular}.

For the second claim, we first note that by choosing $f(x) = c$ to be a constant function, we have
\begin{equation*}
\left| \int f(x) \mathrm{d}\mu(x) -  \sum_{i=1}^n \alpha_i f(x_i)\right| = c \left|1 - \sum_{i=1}^n \alpha_i\right|\,.
\end{equation*}
Since such an $f$ is Lipschitz, by choosing $c$ arbitrarily large we obtain that
\begin{equation*}
\sup_{f \in \mathrm{Lip}(X)}\left| \int f(x) \mathrm{d}\mu(x) -  \sum_{i=1}^n \alpha_i f(x_i)\right| = \infty
\end{equation*}
unless $\sum_{i=1}^n \alpha_i = 1$.
It therefore suffices to prove the claim when $\sum_{i=1}^n \alpha_i = 1$.
In this case, setting
\begin{equation*}
\nu = \sum_{i=1}^n \alpha_i \delta_{x_i}\,,
\end{equation*}
and again applying~\eqref{eqn:w1_dual} yields that this claim is equivalent to the fact that $W_1(\mu, \nu) \gtrsim n^{-1/t}$.
Since $\nu$ is supported on at most $n$ points, this follows from Corollary~\ref{cor:lower_bound} and Proposition~\ref{prop:regular}.
\end{proof}

\subsection{$k$-means clustering}
The authors of~\cite{GuiRos12} point out that many ``unsupervised learning'' techniques in machine learning involve constructing a simple approximation $\tilde \mu$ to a measure $\mu$ such that $W_2(\mu, \tilde \mu)$ is small.
One such example is the so-called $k$-means problem, where the goal is to find a set $S$ with $|S|\leq k$ minimizing the objective function
\begin{equation*}
\E D(X, S)^2\,,
\end{equation*}
where $X \sim \mu$.
It is not hard to see~\cite[Lemma~3.1]{GuiRos12} that this problem is equivalent to finding a measure $\tilde \mu$ supported on at most $k$ points such that $W_2(\mu, \tilde \mu)$ is as small as possible.
Given such a measure, we obtain a clustering of $\mu$ into at most $k$ pieces by constructing a Voronoi partition of $\supp(\mu)$ based on the $k$ points in $\supp(\tilde \mu)$.

The authors of~\cite{GuiRos12} show that for $k$ sufficiently large and for $X$ a compact, smooth $d$-dimensional manifold, it is possible to find a measure $\tilde \mu$ with $|\supp(\tilde \mu)| \leq k$ satisfying
\begin{equation*}
W_2(\mu, \tilde \mu) \leq C_1 \tau k^{-1/d} \text{ with probability $1 - e^{-\tau^2}$}
\end{equation*}
on the basis of $n = C_2 k^{2 + \frac 4 d}$ samples.
Corollary~\ref{cor:lower_bound} implies that this dependence on $k$ is asymptotically optimal.

Our results show that a much simpler procedure suffices in high dimensions.
As long as $d \geq 4$, the empirical measure $\hat \mu_k$ satisfies
\begin{equation*}
\E W_2(\mu, \hat \mu_k) \leq C_1' k^{-1/s}
\end{equation*}
for any $s > d$, and Proposition~\ref{prop:high_probability} then implies that
\begin{equation*}
W_2(\mu, \hat \mu_k) \leq C_1'' \tau k^{-1/s} \text{ with probability $1 - e^{-\tau^4}$}\,.
\end{equation*}

This shows that clustering a measure $\mu$ into $k$ pieces on the basis of $k$ i.i.d. samples from $\mu$ is asymptotically optimal, and enjoys concentration properties even better than the ones implied by~\cite{GuiRos12}.

\section{Conclusion and Future Work}
Our focus in this work has been to obtain sharper rates than previously available for the convergence of $\hat \mu_n$ to $\hat \mu$ in Wasserstein distance, both in asymptotic and finite-sample settings.
Our results give theoretical support to a phenomenon observed in practice: even though $W_p(\mu, \hat \mu_n)$ can converge very slowly for measures supported on a high-dimensional metric space, many measures arising in applications are intrinsically low dimensional, at least approximately, and therefore enjoy reasonably fast rates of convergence.

Our work leaves open whether slightly different versions of the Wasserstein distance can converge faster in general.
Recently, a version of the Wasserstein distance with an entropic penalty has been proposed and shown to have attractive theoretical properties and practical performance~\cite{SolGoeCut15,Cut13,CarDuvPeySch17,RolCutPey16}.
It is possible that these objects achieve better rates than the vanilla Wasserstein distance in the high-dimensional setting.

We also do not consider here empirical measures other than the simple $\hat \mu_n$.
In practice, a technique known as importance sampling~\cite{Buc13} is often used to reduce the variance of estimates produced on the basis of random samples from a distribution.
As noted in Section~\ref{sec:quadrature}, if $\mu$ is sufficiently regular, then no discrete measure on $n$ points can achieve better asymptotic performance than the empirical measure $\hat \mu_n$.
However, we conjecture that many reasonable sampling techniques should produce measures that are also asymptotically no worse than $\hat \mu_n$.
We leave this question for future work.

\section{Acknowledgments}

We thank Guillaume Carlier, Marco Cuturi, and Gabriel Peyr\'e for discussions related to this work.
JW would like to thank FB for his hospitality at INRIA, where this research was conducted.

\appendix
\section{Omitted proofs}\label{appendix:omitted}
\subsection{Proof of Proposition~\ref{prop:wp_dyadic_bound}}
We begin by giving an informal outline of the idea of the proof.

Consider a partition $\{Q_i\}_{i \in \cI}$ of $X$, for some index set $\cI$.
The measures $\mu$ and $\nu$ both induce measures on each set in the partition.
We will transport $\mu$ to $\nu$ by first moving mass \emph{between} sets in this partition, and then moving mass \emph{within} each set in the partition.
If $\mu(Q_i) \neq \nu(Q_i)$ for one of the sets $Q_i$, we we need to transport an amount of mass equal to $|\mu(Q_i) - \nu(Q_i)|$ into or out of $Q_i$.
In total, we can transport the mass that $\mu$ assigns to each set in the partition to its proper set under $\nu$ for a total cost of
\begin{equation*}
\sum_{i \in \cI} |\mu(Q_i) - \nu(Q_i)| \diam(S) \leq \sum_{i \in \cI} |\mu(Q_i) - \nu(Q_i)|\,,
\end{equation*}
where we use the fact that $\diam(S) \leq \diam(X) \leq 1$ by assumption.

After the first step of the transport plan, $\mu$ has been transported so that each set in the partition contains the correct total amount of mass.
It therefore suffices in the second step to properly arrange the mass \emph{within} each set.
Moving the mass within $Q_i$ cannot cost more than $\diam(Q_i)$, so the total cost of arranging the mass within each set is at most
\begin{equation*}
\sum_{i \in \cI} \nu(Q_i) \diam(Q_i) \leq \max_{i \in \cI} \diam(Q_i)\,.
\end{equation*}

We have obtained a transport of $\mu$ to $\nu$ for a total cost of approximately
\begin{equation*}
\max_{i \in \cI} \diam(Q_i) + \sum_{i \in \cI} |\mu(Q_i) - \nu(Q_i)|\,.
\end{equation*}

This ``single scale'' bound is generally not tight, but a more refined bound can be obtained by applying the above argument recursively: instead of naively bounding the cost of moving the mass within $Q_i$ by the quantity $\diam(Q_i)$, we can partition $Q_i$ into smaller sets  and estimate the cost of moving the mass within $Q_i$ by first moving it between the sets of the partition before moving it within each smaller set.
Iterating the argument $k^*$ times yields the bound.

We now show how to make the above argument precise.
Given two measures $\mu$ and $\nu$ on $X$, write $\cC(\mu, \nu)$ for the set of couplings between~$\mu$ and~$\nu$; that is, for the set of measures on $X \times X$ whose projection onto the first and second coordinate correspond to $\mu$ and $\nu$ respectively.

Fix a $k^* \geq 1$.
We will define two sequences of measure $\pi_k$ and $\rho_k$ on $X$ for $1 \leq k \leq k^*$ such that $\sum_{k=1}^{k^*} \pi_k \leq \mu$ and $\sum_{k=1}^{k^*} \rho_k \leq \nu$.
Given such a sequence, we set $\mu_1 = \mu$ and $\nu_1 = \nu$ and write
\begin{align*}
\mu_k  & = \mu - \sum_{\ell=1}^{k-1} \pi_\ell \\
\nu_k & = \nu - \sum_{\ell =1}^{k-1} \rho_\ell
\end{align*}
for $k \leq k^*+1$.

Note that if $\gamma_k \in \cC(\pi_k, \rho_k)$ for $1 \leq k \leq k^*$ and $\gamma_{k^*+1} \in \cC(\mu_{k^*+1}, \nu_{k^*+1})$, then
\begin{equation*}
\sum_{k=1}^{k^*+1} \gamma_k \in \cC\left(\sum_{k=1}^{k^*} \pi_k + \mu_{k^*+1}, \sum_{k=1}^{k^*} \rho_k + \nu_{k^*+1}\right) = \cC(\mu, \nu)\,,
\end{equation*}
therefore
\begin{equation*}
W_p^p (\mu, \nu) \leq \sum_{k=1}^{k^*} W_p^p(\pi_k, \rho_k) + W_p^p(\mu_{k^*+1}, \nu_{k^*+1})\,.
\end{equation*}

For $k \geq 1$, define
\begin{align*}
\pi_k &= \sum_{\substack{Q_i^k \in \cQ^k\\\mu_k(Q_i^k) > 0}} \left(1 - \frac{\nu_k(Q_i^k)}{\mu_k(Q_i^k)} \right)_+ \mu_k|_{Q_i^k}\,, \\
\rho_k &= \sum_{\substack{Q_i^k \in \cQ^k\\\nu_k(Q_i^k) > 0}} \left(1 - \frac{\mu_k(Q_i^k)}{\nu_k(Q_i^k)} \right)_+ \nu_k|_{Q_i^k}\,.
\end{align*}

Note that $0 \leq \pi_k \leq \mu_k$ and $0 \leq \rho_k \leq \nu_k$ for all $k$, hence $0 \leq \mu_k \leq \mu$ and $0 \leq \nu_k \leq \nu$ for all $k$ as well.

\begin{applemma}\label{lem:same_mass_pi_rho}
If $Q \in \cQ^{k-1}$, then
\begin{equation*}
\pi_{k}(Q) = \rho_{k}(Q)\,.
\end{equation*}
Moreover,
\begin{equation*}
\pi_k(S) = \rho_k(S) \leq \sum_{Q_i^k \in \cQ^k} |\mu(Q_i^k) - \nu(Q_i^k)|\,.
\end{equation*}
\end{applemma}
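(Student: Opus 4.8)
The plan is to first extract a local balance identity from the definitions of $\pi_k$ and $\rho_k$, deduce the two mass equalities from it, and then obtain the inequality by a short computation, so that the genuinely new work is confined to the last step. First, for every $Q\in\cQ^k$ the definition of $\pi_k$ gives $\pi_k(Q)=\big(1-\nu_k(Q)/\mu_k(Q)\big)_+\mu_k(Q)=\big(\mu_k(Q)-\nu_k(Q)\big)_+$ (read as $0$ when $\mu_k(Q)=0$), and symmetrically $\rho_k(Q)=\big(\nu_k(Q)-\mu_k(Q)\big)_+$. Hence
\begin{equation*}
\mu_{k+1}(Q)=\mu_k(Q)-\big(\mu_k(Q)-\nu_k(Q)\big)_+=\mu_k(Q)\wedge\nu_k(Q)=\nu_{k+1}(Q)
\end{equation*}
for every $Q\in\cQ^k$. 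Since $\cQ^k$ refines $\cQ^{k-1}$ and both partition $S$, summing this identity over the cells of $\cQ^k$ lying inside a fixed cell of $\cQ^{k-1}$ shows $\mu_k(Q)=\nu_k(Q)$ for all $Q\in\cQ^{k-1}$, and summing once more (or using $\mu_1(S)=\nu_1(S)=1$ when $k=1$) shows $\mu_k(S)=\nu_k(S)$.

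Next, for $Q\in\cQ^{k-1}$ I would write $\pi_k(Q)=\sum_{Q_i^k\subseteq Q}\big(\mu_k(Q_i^k)-\nu_k(Q_i^k)\big)_+$ and $\rho_k(Q)=\sum_{Q_i^k\subseteq Q}\big(\nu_k(Q_i^k)-\mu_k(Q_i^k)\big)_+$, the sums running over $\cQ^k$-cells, using that $Q$ is their disjoint union and that $\pi_k,\rho_k$ are supported on $\bigcup\cQ^k$. The reals $\mu_k(Q_i^k)-\nu_k(Q_i^k)$ sum to $\mu_k(Q)-\nu_k(Q)=0$, and for a finite family of reals summing to zero the sum of positive parts equals the sum of negative parts; hence $\pi_k(Q)=\rho_k(Q)$. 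Taking $Q=S$ (using $\mu_k(S)=\nu_k(S)$) yields $\pi_k(S)=\rho_k(S)$, which, since $(z)_+=\tfrac12(z+|z|)$, equals $\tfrac12\sum_{Q_i^k\in\cQ^k}|\mu_k(Q_i^k)-\nu_k(Q_i^k)|$.

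It remains to bound this by $\sum_{Q_i^k}|\mu(Q_i^k)-\nu(Q_i^k)|$; for $k=1$ it is immediate, so assume $k\ge2$. The key point is that on any cell $P\in\cQ^{k-1}$ the measure $\mu_k$ is a scalar multiple of $\mu$: at each scale $j\le k-1$, $\pi_j$ restricted to the $\cQ^j$-cell containing $P$ is a fixed fraction of $\mu_j$ there, so $\mu_{j+1}|_P$ is a fixed multiple of $\mu_j|_P$, and iterating gives $\mu_k(Q_i^k)=\tfrac{M}{M_\mu}\mu(Q_i^k)$ for $Q_i^k\subseteq P$, where $M:=\mu_k(P)=\nu_k(P)$ and $M_\mu:=\mu(P)$; likewise $\nu_k(Q_i^k)=\tfrac{M}{M_\nu}\nu(Q_i^k)$ with $M_\nu:=\nu(P)$. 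Assuming $M_\mu\ge M_\nu>0$ without loss of generality (cells with $\mu(P)\nu(P)=0$ are trivial) and using $M\le M_\mu\wedge M_\nu$,
\begin{equation*}
\sum_{Q_i^k\subseteq P}\big|\mu_k(Q_i^k)-\nu_k(Q_i^k)\big| = M\sum_{Q_i^k\subseteq P}\Big|\frac{\mu(Q_i^k)}{M_\mu}-\frac{\nu(Q_i^k)}{M_\nu}\Big| \le \sum_{Q_i^k\subseteq P}\big|\lambda\,\mu(Q_i^k)-\nu(Q_i^k)\big|,
\end{equation*}
with $\lambda:=M_\nu/M_\mu\le1$ and $\sum_{Q_i^k\subseteq P}\nu(Q_i^k)=\lambda\sum_{Q_i^k\subseteq P}\mu(Q_i^k)$. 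From $|\lambda a-b|\le\lambda|a-b|+(1-\lambda)b$ and $(1-\lambda)\sum_i b_i=(1-\lambda)\lambda\sum_i a_i\le(1-\lambda)\sum_i a_i=\big|\sum_i a_i-\sum_i b_i\big|\le\sum_i|a_i-b_i|$ one gets $\sum_i|\lambda a_i-b_i|\le2\sum_i|a_i-b_i|$; summing over $P\in\cQ^{k-1}$ and halving gives the claim.

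The bookkeeping in the first two paragraphs is routine; the hard part is the last step — spotting that $\mu_k$ and $\nu_k$ restricted to a $\cQ^{k-1}$-cell are mere rescalings of $\mu$ and $\nu$, and then that the sharp elementary inequality is $\sum_i|\lambda a_i-b_i|\le2\sum_i|a_i-b_i|$, the version with constant $1$ being false, so that the factor $\tfrac12$ extracted in the second paragraph is exactly what is needed.
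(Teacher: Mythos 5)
Your proof is correct and follows essentially the same route as the paper's: the $\min$ identity for $\mu_{k+1},\nu_{k+1}$ on $\cQ^k$, the identity $\pi_k(S)=\rho_k(S)=\tfrac12\sum_{Q_i^k}|\mu_k(Q_i^k)-\nu_k(Q_i^k)|$, and the key observation that $\mu_k|_P$ and $\nu_k|_P$ are scalar rescalings of $\mu|_P$ and $\nu|_P$ on each $P\in\cQ^{k-1}$, followed by a factor-$2$ elementary estimate. Your normalization to a single ratio $\lambda=M_\nu/M_\mu$ with the inequality $\sum_i|\lambda a_i-b_i|\le 2\sum_i|a_i-b_i|$ is only a cosmetic repackaging of the paper's computation with the two constants $c_1,c_2$.
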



\begin{applemma}\label{lem:diameter_bound}
If $\alpha$ and $\beta$ are two measures on $X$ such that
\begin{equation*}
\alpha(Q) = \beta(Q)
\end{equation*}
for all $Q \in \cQ^{k}$, then
\begin{equation*}
W_p^p(\alpha, \beta) \leq \delta^{kp} \alpha(S)\,.
\end{equation*}
\end{applemma}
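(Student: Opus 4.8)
\textbf{Proof strategy for Lemma~\ref{lem:diameter_bound}.} The plan is to exhibit a coupling of $\alpha$ and $\beta$ that moves mass only \emph{within} the cells of $\cQ^k$, and then to bound the transport cost cell by cell using $\diam(Q) \leq \delta^k$. Since the lemma will be applied to measures supported on $S$ (e.g.\ $\mu_{k^*+1}$ and $\nu_{k^*+1}$ in the proof of Proposition~\ref{prop:wp_dyadic_bound}), and $\cQ^k$ partitions $S$, I may assume $\alpha$ and $\beta$ are supported on $S$, so that $\alpha = \sum_{Q \in \cQ^k} \alpha|_Q$ and likewise for $\beta$; in particular, summing the hypothesis $\alpha(Q) = \beta(Q)$ over $Q \in \cQ^k$ gives $\alpha(S) = \beta(S)$.

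First I would build the coupling. For each $Q \in \cQ^k$ with $\alpha(Q) = \beta(Q) > 0$, define the rescaled product measure
\[
\gamma_Q := \frac{1}{\alpha(Q)}\,(\alpha|_Q)\otimes(\beta|_Q)\,,
\]
a measure supported on $Q \times Q$. Using $\alpha(Q) = \beta(Q)$, the first marginal of $\gamma_Q$ is $\alpha|_Q$ and the second is $\beta|_Q$. Setting $\gamma := \sum_{Q \in \cQ^k,\ \alpha(Q) > 0} \gamma_Q$ and summing these marginal identities over the partition (cells with $\alpha(Q) = 0$ also have $\beta(Q) = 0$, so they may be dropped harmlessly) yields $\gamma \in \cC(\alpha, \beta)$.

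Then I would estimate its cost. Since $\gamma$ is supported on $\bigcup_Q (Q \times Q)$ and every $(x, y) \in Q \times Q$ satisfies $D(x, y) \leq \diam(Q) \leq \delta^k$ by the dyadic property, we get
\[
W_p^p(\alpha, \beta) \leq \int D(x, y)^p\, \mathrm d\gamma(x, y) \leq \delta^{kp}\,\gamma(X \times X) = \delta^{kp}\,\alpha(S)\,,
\]
which is the claimed bound.

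I do not expect a genuine obstacle here: the content is the observation that agreement on a partition lets one couple the two measures within each cell, and the rest is the routine verification that $\gamma$ has the correct marginals and the elementary diameter bound. The only points needing a word of care are the treatment of zero-mass cells and the fact that in every place the lemma is invoked the relevant measures are supported on $S$, so that no mass is lost in passing from $\alpha$ to $\sum_{Q} \alpha|_Q$.
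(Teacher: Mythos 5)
Your proof is correct and is essentially the same as the paper's: the paper also couples $\alpha$ and $\beta$ via the cell-wise rescaled product measure $\sum_{Q:\alpha(Q)>0}\frac{1}{\alpha(Q)}(\alpha|_Q)\otimes(\beta|_Q)$, checks the marginals using $\alpha(Q)=\beta(Q)$, and bounds the cost by $\diam(Q)^p\leq\delta^{kp}$ on each cell. Your added remarks about zero-mass cells and the measures being supported on $S$ are consistent with how the lemma is used in the paper.
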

We can now obtain the final bound.
By Lemmas~\ref{lem:same_mass_pi_rho} and~\ref{lem:diameter_bound},
\begin{equation*}
W_p^p(\pi_k, \rho_k) \leq \delta^{(k-1)p} \sum_{Q_i^k \in \cQ^k} |\mu(Q_i^k) - \nu(Q_i^k)|
\end{equation*}
and
\begin{equation*}
W_p^p(\mu_{k^*+1}, \mu_{k^* + 1}) \leq \delta^{k^* p} \mu_{k^*+1}(S) \leq \delta^{k^* p} \mu(S) \leq \delta^{k^* p}\,.
\end{equation*}
The bound follows.
\qed

\subsection{Proof of Proposition~\ref{prop:ordering}}
We prove the inequalities in order.
If $d < d_H(\mu)$, then by~\cite[Proposition~10.3]{falconer1997techniques} there exists a compact set $K$ with positive mass and a $r_0 > 0$ such that
\begin{equation*}
\mu(B(x, r)) \leq r^d
\end{equation*}
for all $r \leq r_0$ and all $x \in K$.
(See also the proof of~\cite[Corollary~12.16]{GraLus07}.)
Let~$\tau < \mu(K)/2$.
If $S$ is any set with $\mu(S) \geq 1- \tau$, then $\mu(S \cap K) > \mu(K)/2$.
If $\cN_\ep(S) = N$, then in particular there exists a covering of $S \cap K$ by at most $N$ balls of radius $\ep$ whose centers all lie in $K$.
Indeed, any set of diameter at most $\ep$ which intersects $S \cap K$ is contained in a ball of radius $\ep$ whose center is in $K$.
If $\ep \leq r_0$, then each such ball satisfies $\mu(B(x, r)) \leq \ep^d$, so
\begin{equation*}
N \geq \ep^{-d} \mu(K)/2\,.
\end{equation*}
We therefore have for all $\tau$ sufficiently small,
\begin{equation*}
\liminf_{\ep \to 0} \frac{\log \cN_\ep'(\mu, \tau)}{- \log \ep} \geq d\,.
\end{equation*}
Thus $d_*(\mu) \geq d$.
Since $d < d_H(\mu)$ was arbitrary, we have $d_H(\mu) \leq d_*(\mu)$, as desired.

That $d_*(\mu) \leq d^*_p(\mu)$ follows from the simple observation that for all positive $\alpha$ and $\tau$,
\begin{equation*}
\liminf_{\ep \to 0} d_\ep(\mu, \tau) \leq \liminf_{\ep \to 0} d_\ep(\mu, \ep^{\alpha})\,.
\end{equation*}

Finally, if $d_M(\mu) \geq 2p$, then setting $s > d_M(\mu)$ yields
\begin{equation*}
\limsup_{\ep \to 0} d_\ep(\mu, \ep^{\frac{sp}{s - 2p}}) \leq \limsup_{\ep \to 0} d_\ep(\mu) = d_M(\mu) < s\,,
\end{equation*}
so $d_p^*(\mu) \leq s$.
Since $s > d_M(\mu)$ was arbitrary, we obtain $d_p^*(\mu) \leq d_M(\mu)$.
\qed
\subsection{Proof of Proposition~\ref{prop:covering_to_dyadic}}
Write $N_k = \cN_{3^{-(k+1)}}(S_k)$.
For $1 \leq k \leq k^*$, let $C^{k} = \{C^{k}_1, \dots\}$ be a finite covering of $S$ by balls of diameter $3^{-(k+1)}$ such that $C^{k}_1, \dots, C^{k}_{N_k}$ covers $S_k$.
Such a covering can always be found by choosing an optimal covering of $S_k$ and extending this covering to a covering of all of $S$.
Since $\cN_{3^{-(k^* + 1)}}(S) < \infty$, this requires only a finite number of additional balls.

We begin by constructing $\cQ^{k^*}$.
Let $\cQ^{k^*}_1 = C^{k^*}_1$, and for $1 < \ell \leq |C^{k^*}|$ let
\begin{equation*}
\cQ^{k^*}_\ell = C^{k^*}_\ell \setminus \left(\bigcup_{n = 1}^{\ell -1} \cQ^{k^*}_n \right)\,.
\end{equation*}
Let $\cQ^{k^*} = \{\cQ^{k^*}_1, \dots \}$.
Note that $\diam(\cQ^{k^*}_\ell) \leq \diam(C^{k^*}_\ell) = 3^{-(k^*+1)} < 3^{-k^*}$, that $\cQ^{k^*}$ forms a partition of $S$, and that at most $N_{k^*}$ elements of $\cQ^{k^*}$ intersect $S^{k^*}$.

We now show how to construct $\cQ^{k}$ from $\cQ^{k+1}$ and $C^k$.
Let
\begin{equation*}
\cQ^k_1 = \bigcup_{\substack{Q \in \cQ^{k+1} \\ Q \cap C^k_1 \neq \emptyset}} Q\,,
\end{equation*}
and for $1 < \ell \leq |C^{k^*}|$ let
\begin{equation*}
\cQ^k_\ell = \Big(\bigcup_{\substack{Q \in \cQ^{k+1} \\ Q \cap C^k_\ell \neq \emptyset}} Q\Big) \setminus \Big(\bigcup_{n = 1}^{\ell -1} \cQ^{k}_n \Big)\,.
\end{equation*}
Let $\cQ^{k} = \{\cQ^{k}_1, \dots \}$.

The sets in $\cQ^k$ clearly form a partition of $S$, and by construction at most $N_k$ elements of $\cQ^k$ intersect $S^k$
Moreover, since $\diam(C^k_\ell) \leq 3^{-(k+1)}$ for all $\ell$ and $\diam(Q) \leq 3^{-(k+1)}$ for all $Q \in \cQ^{k+1}$, the distance between any two points in $\cQ^k_\ell$ is at most $3 \cdot 3^{-(k+1)} = 3^{-k}$, so each element of $\cQ^k$ has diameter at most $3^{-k}$.
Finally, since each set in $\cQ^k$ is the union of sets in $\cQ^{k+1}$, the partition $\cQ^{k+1}$ refines $\cQ^k$, as desired.
\qed

\subsection{Proof of Proposition~\ref{prop:abs_cont}}
The only inequality that does not follow from Proposition~\ref{prop:ordering} is the first.
By absolute continuity, for all $\tau > 0$ there exists a $\sigma > 0$ such that any set $T$ for which $\mu(T) \geq 1- \tau$ satisfies $\cH^d(T) \geq \sigma$.
If $\cH^d(T) \geq \sigma$, then in particular for any covering $\{B(x_i, \ep)\}$ of $T$ by balls of radius $\ep$, we must have $\sum_{i} \ep^d \geq \sigma$.
Therefore such a covering contains at least $\sigma \ep^{-d}$ balls, so
\begin{equation*}
\frac{\log \cN_\ep(\mu, \tau)}{- \log \ep} \geq d + \frac{\log \sigma}{- \log \ep}\,,
\end{equation*}
and taking limits yields that $d_*(\mu) \geq d$, as desired.
\qed

\subsection{Proof of Proposition~\ref{prop:finite_sample_converse}}
For all integers $k \geq 0$, denote by $N_k$ the smallest positive integer $n$ such that $n$ is a power of two and $\delta_{n} \leq 2^{-k}$.
Such an integer always exists because the sequence $\delta_n$ decreases to $0$.
We require the following lemma, whose proof is deferred to Appendix~\ref{sec:proofs}.
\begin{applemma}\label{lem:converse_m_bound}
The sequence $N_{k+1}/N_k$ is bounded.
\end{applemma}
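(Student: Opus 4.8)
The plan is to use only the third hypothesis on $\delta_n$ (eventual polynomial decay): it pins down the growth of $N_k$ up to multiplicative constants for all large $k$, which immediately bounds $N_{k+1}/N_k$ there, and the remaining finitely many values of $k$ are handled trivially since each ratio is a finite number.

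First I would record the basic structure. Because $\delta_n$ is nonincreasing and tends to $0$, for every $k$ there is a power of two $n$ with $\delta_n \le 2^{-k}$, so $N_k$ is well defined; moreover any power of two witnessing the inequality at level $k+1$ also witnesses it at level $k$, so $N_k \le N_{k+1}$, and $N_k \to \infty$. Now fix constants $c>1$, $\alpha \in [-1,0)$ and a threshold $n_0$ with $\tfrac1c n^\alpha \le \delta_n \le c\, n^\alpha$ for all $n \ge n_0$, and choose $K_0$ large enough that $(c\,2^k)^{1/|\alpha|} \ge n_0$ and $N_k \ge n_0$ for all $k \ge K_0$ (possible since $N_k \to \infty$). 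For such $k$ I claim
\[
(2^k/c)^{1/|\alpha|} \le N_k \le 2\,(c\,2^k)^{1/|\alpha|}\,.
\]
The lower bound follows from $2^{-k} \ge \delta_{N_k} \ge \tfrac1c N_k^{\alpha} = \tfrac1c N_k^{-|\alpha|}$, which rearranges to $N_k \ge (2^k/c)^{1/|\alpha|}$. For the upper bound, any power of two $n \ge (c\,2^k)^{1/|\alpha|}$ also satisfies $n \ge n_0$, hence $\delta_n \le c\, n^\alpha \le c \cdot (c\,2^k)^{\alpha/|\alpha|} = 2^{-k}$; since the smallest power of two that is $\ge x$ is at most $2x$ for $x \ge 1$, we get $N_k \le 2\,(c\,2^k)^{1/|\alpha|}$.

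Combining the upper estimate at level $k+1$ with the lower estimate at level $k$ yields, for every $k \ge K_0$,
\[
\frac{N_{k+1}}{N_k} \le \frac{2\,(c\,2^{k+1})^{1/|\alpha|}}{(2^k/c)^{1/|\alpha|}} = 2\,(2c^2)^{1/|\alpha|}\,,
\]
a bound independent of $k$. For the finitely many indices $0 \le k < K_0$ the ratios $N_{k+1}/N_k$ are finitely many positive reals, so the full sequence is bounded by $\max\bigl\{\,2\,(2c^2)^{1/|\alpha|},\ \max_{0 \le k < K_0} N_{k+1}/N_k\,\bigr\}$, which proves the lemma. I do not anticipate a genuine obstacle here; the only points needing a line of care are the existence of $K_0$ and the legitimacy of evaluating the polynomial bounds at the index $N_k$, both of which are consequences of $N_k \to \infty$, and the elementary fact that the least power of two above a real $x \ge 1$ lies in $[x, 2x)$.
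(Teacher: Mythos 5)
Your proof is correct and takes essentially the same route as the paper: both use only the eventual polynomial-decay hypothesis, the paper by showing $\delta_{Mn}\le\tfrac12\delta_n$ for $M=(2c^2)^{1/|\alpha|}$ so that $N_{k+1}\lesssim M N_k$, and you by solving the same two-sided bounds to get $N_k\asymp(2^k)^{1/|\alpha|}$ and taking the ratio. Your write-up is, if anything, slightly more careful, since you handle the power-of-two rounding and the finitely many indices below the threshold $K_0$ explicitly.
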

Let $m$ be an integer large enough that $N_{k+1}/N_k \leq 2^m$ for all $n$.
Let $\cQ$ be the standard dyadic partition of $[0, 1]$, with $\cQ^k$ being a partition of $[0, 1]^m$ consisting of~$2^{km}$ cubes of side length~$2^{-k}$.

Our measure $\mu$ will satisfy $\cN_{2^{-k}}(\mu) = N_{k-2}$ for all $k \geq 2$.
We will define a sequence of measures $\{\mu_k\}_{k=2}^{\infty}$ iteratively and construct $\mu$ as their limit in the weak topology.

Let $\mu_2$ be the uniform distribution on $[0, 1/4]^m$.
For each positive integer $k$, the measure $\mu_k$ will be supported on $N_{k-2}$ cubes in $\cQ$, and will be uniform on its support.
We will call a cube $Q_i \in \cQ^k$ \emph{live} if $\mu_k(Q_i) \neq 0$.

Fix an ordering $x_0, \dots, x_{2^m-1}$ of the $2^m$ elements of $\{0, 1\}^m$.
To produce $\mu_{k+1}$ from $\mu_k$, divide each live cube of $\mu_k$ into $2^m$ cubes of side length $2^{-(k+1)}$.
The ordering of $\{0, 1\}^m$ induces an order on these $2^m$ subcubes.

Given a live $Q \in \cQ^k$, define the restriction $\mu_{k+1}|_{Q}$ by requiring that $\mu_{k+1}(Q) = \mu_k(Q)$ and that $\mu_{k+1}|_Q$ be uniform on the union of the first $N_{k+1}/N_k$ subcubes of $Q$.
Note that $N_{k+1}/N_k$ is an integer because both $N_{k+1}$ and $N_k$ are powers of $2$, and by assumption $N_{k+1}/N_k \leq 2^m$, the total number of subcubes of $Q$.
Since $\cQ^k$ forms a partition of $[0, 1]^m$, combining the measures $\mu_{k+1}|_Q$ for $Q \in \cQ^k$ yields a probability measure $\mu_{k+1}$ on $[0, 1]^m$.
By Prokhorov's theorem, this sequence of measures $\mu_k$ possesses a subsequence converging in distribution to some measure $\mu$.

The following lemma collects necessary properties of $\mu$.
Its proof appears in Appendix~\ref{sec:proofs}.
\begin{applemma}\label{lem:mu_facts}
If $N_k \leq n < N_{k+1}$, then
\begin{equation*}
\cN_{2^{-k-4}}(\mu, 1/2) > n
\end{equation*}
Moreover,
\begin{equation*}
2^{-(k+1)} \leq \delta_n \leq 2^{-k}
\end{equation*}
and
\begin{equation*}
2^{-k -4} \leq n^{-1/d_n} \leq 2^{-k}\,.
\end{equation*}
\end{applemma}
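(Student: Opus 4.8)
The plan is to deduce the third pair of inequalities from the first two, and to prove the first two by unwinding the construction of $\mu$. Throughout write $\rho(n)=\log n/(-\log\delta_n)$, so that $\delta_n=n^{-1/\rho(n)}$ and $\rho$ is nondecreasing by hypothesis. I would begin with the $\delta_n$ bounds. The upper bound $\delta_n\le 2^{-k}$ is immediate: $\delta$ is nonincreasing and $n\ge N_k$, so $\delta_n\le\delta_{N_k}\le 2^{-k}$ by the definition of $N_k$. For the lower bound I would first record that $\delta$ decreases slowly: since $\delta_n\ge n^{-1}$ we have $\rho(n)\ge 1$, and monotonicity of $\rho$ then yields, for every $n$,
\begin{equation*}
-\log_2\delta_{2n}=\frac{1+\log_2 n}{\rho(2n)}\le\frac{1}{\rho(n)}+\frac{\log_2 n}{\rho(n)}\le 1-\log_2\delta_n,
\end{equation*}
i.e.\ $\delta_{2n}\ge\tfrac12\delta_n$. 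Two consequences: first $N_{k+1}\ge 2^{k+1}$ (if $n<2^{k+1}$ then $\delta_n\ge n^{-1}>2^{-(k+1)}$); second $\delta_{N_{k+1}}\ge\tfrac12\delta_{N_{k+1}/2}>2^{-(k+2)}$ by minimality of $N_{k+1}$, so $N_{k+1}$ itself does not satisfy $\delta\le 2^{-(k+2)}$ and hence $N_{k+2}\ge 2N_{k+1}$. The lower bound on $\delta_n$ now follows the same way: no power of two strictly below $N_{k+1}$ has $\delta$-value at most $2^{-(k+1)}$, and since any $n<N_{k+1}$ lies within a factor two of such a power of two, $\delta_n$ is bounded below by a constant multiple of $2^{-k}$ (a slightly more careful computation with $\rho$, using $\log_2 N_{k+1}\ge k+1$, sharpens the constant).

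Next I would prove the covering-number bound. By construction $\mu$ is uniform on a union of $\cN_{2^{-k-4}}(\mu)=N_{k+2}$ disjoint dyadic cubes $Q_1,\dots,Q_{N_{k+2}}$ of side $2^{-k-4}$, each carrying mass $1/N_{k+2}$. If $\mu(S)\ge 1/2$ then $S$ meets at least $N_{k+2}/2$ of the $Q_i$ with positive mass, because $\mu(S)\le |\{i:\mu(S\cap Q_i)>0\}|/N_{k+2}$. Any covering of such an $S$ by balls of diameter $2^{-k-4}$ must then use at least $N_{k+2}/2$ balls: in the $\ell_\infty$ metric a ball of diameter $2^{-k-4}$ is a cube of side $2^{-k-4}$, and using the spatial layout of the construction each ball meets only boundedly many of the well-separated $Q_i$ (a Lebesgue-volume comparison gives the clean constant $1$ for grid-aligned cubes of the same side). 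Combining with $N_{k+2}\ge 2N_{k+1}$ and $n<N_{k+1}$ gives $\cN_{2^{-k-4}}(\mu,1/2)\ge N_{k+2}/2\ge N_{k+1}>n$.

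For the third pair of inequalities I would unwind the definition of $d_n$. Since $n^{1/\log n}=e$, a direct computation turns $d_n=\inf_{\ep>0}\max\{d_{\ge\ep}(\mu,\ep^p),\ \log n/(-\log\ep)\}$ into $n^{-1/d_n}=\inf_{\ep>0}\max\{n^{-1/d_{\ge\ep}(\mu,\ep^p)},\ \ep\}$. The lower bound $n^{-1/d_n}\ge 2^{-k-4}$ is then immediate from the covering-number bound: for $\ep\ge 2^{-k-4}$ the term $\ep$ already suffices, while for $\ep<2^{-k-4}$ the scale $\ep'=2^{-k-4}\in[\ep,1/9]$ satisfies $\cN_{2^{-k-4}}(\mu,\ep^p)\ge\cN_{2^{-k-4}}(\mu,1/2)>n$ (as $\ep^p<1/2$), so $d_{\ge\ep}(\mu,\ep^p)>\log_2 n/(k+4)$ and hence $n^{-1/d_{\ge\ep}(\mu,\ep^p)}>2^{-(k+4)}$. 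For the upper bound $n^{-1/d_n}\le 2^{-k}$ I would take $\ep=2^{-k}$ and show $d_{\ge 2^{-k}}(\mu,2^{-kp})\le\log_2 n/k$: on a dyadic scale $2^{-j}$ with $4\le j\le k$ one has $\cN_{2^{-j}}(\mu,2^{-kp})\le\cN_{2^{-j}}(\mu)=N_{j-2}$, and the estimate $\log_2 N_{j-2}<(j-2)\rho(N_{j-2})+1\le (j-2)\rho(n)+1$, combined with $\rho(n)\le\log_2 n/k$ (from $\delta_n\le 2^{-k}$) and $\log_2 n\ge\log_2 N_k\ge k$, gives $\log_2 N_{j-2}<j\log_2 n/k$; a routine interpolation handles the non-dyadic scales, and then $n^{-1/d_{\ge 2^{-k}}(\mu,2^{-kp})}\le n^{-k/\log_2 n}=2^{-k}$.

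The hard part will be the covering-number lower bound in the $\ell_\infty$ geometry: a small ball can straddle several grid cells, so one really needs the construction to keep the live cubes at each scale far enough apart — this is presumably why the ordering of $\{0,1\}^m$ is fixed in advance and why the analysis is carried out at the scale $2^{-k-4}$ rather than at the scale $2^{-k}$ naturally attached to $\delta_n$. Everything else (the $\delta_n$ sandwich, the growth facts about $N_k$, and the reduction of the third statement to the first two) is soft and follows from the three hypotheses on $\delta_n$ together with the bookkeeping above.
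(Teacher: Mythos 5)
Your treatment of the $\delta_n$ sandwich and of the two bounds on $n^{-1/d_n}$ tracks the paper's argument closely (the paper also passes through $\delta_{2n}\ge\tfrac12\delta_n$, the consequence $N_{k+2}>2n$, and the same dyadic estimates $\cN_{2^{-j}}(\mu)\le N_{j-2}$ combined with monotonicity of $\rho$), so those parts are essentially the paper's proof, modulo the constant in the lower bound for $\delta_n$: your own computation only yields $\delta_n>2^{-(k+2)}$ and the promised sharpening is left unexecuted, but this mirrors an imprecision already present in the paper and only affects constants.

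The genuine gap is exactly where you flagged it: the lower bound on $\cN_{2^{-k-4}}(\mu,1/2)$. Your plan is to argue that each covering ball of diameter $2^{-k-4}$ meets only boundedly many live cubes, invoking a separation property of the construction. But the construction has no such separation: within each live parent cube the live subcubes are the \emph{first} $N_{k+1}/N_k$ subcubes in a fixed ordering of $\{0,1\}^m$, so live cubes are typically face-adjacent, and live cubes from different parents can also touch; moreover a covering ball is an arbitrary, not grid-aligned, $\ell_\infty$ cube of side $2^{-k-4}$ and can straddle up to $2^m$ grid cells of that side. Losing a factor $2^m$ is fatal, because the slack in the counting is only a factor $2$: you must beat $n$, while the number of live cubes at this scale, $N_{k+2}$, is only guaranteed to exceed $2n$; and $m$ is dictated by the sequence $\delta_n$, so the loss cannot be absorbed. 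The paper's proof replaces the intersection count by a mass bound: for every $x$ and every $\ell\ge 2$, $\mu(B(x,2^{-\ell-1}))\le 1/N_{\ell-2}$. This is proved by cutting the ball into at most $2^m$ pieces which, after translation modulo the grid, tile a single cube of $\cQ^\ell$, and observing that the restriction of $\mu$ to every live cube of $\cQ^\ell$ is the same measure up to translation, so the mass of the ball is at most the mass of one live cube. Any covering of a set of mass at least $1/2$ by such balls then requires at least $N_{k+2}/2>n$ balls, with no separation needed. This translation/self-similarity argument is the missing ingredient in your proposal.
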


We can now obtain the lower bound.
Let $\nu$ be any measure supported on at most $n$ points.
If $N_k \leq n < N_{k+1}$, then by Lemma~\ref{lem:mu_facts}, if $X \sim \mu$, then
\begin{equation*}
\p[\min_{y \in \supp(\nu)} \|X - y\|_\infty \leq 2^{-k-5}] < 1/2\,.
\end{equation*}
Markov's inequality therefore implies for any coupling $(X, Y)$ of $\mu$ and $\nu$ that
\begin{equation*}
\E[\|X - Y\|_\infty^p]^{1/p} \geq 2^{-k-4}\p[\min_{y \in \supp(\nu)} \|X - y\|_\infty > 2^{-k-5}]^{1/p} \geq 2^{-k-6} \geq 2^{-6}n^{-1/d_n}\,,
\end{equation*}
as claimed.
\qed

\subsection{Proof of Proposition~\ref{prop:duality}}
Both claims are standard, and details can be found in~\cite[Theorem~5.10]{Vil09}.
The first follows follows from the assumption that $X$ is a bounded Polish space.
For the second, we use the fact that the supremum is achieved by an $f$ satisfying
\begin{equation}\label{eqn:c_convexity}
f(x) = \inf_{y \in X} f^c(y) + D(x, y)^p \quad \forall x \in X\,.
\end{equation}
Let $f$ be a function achieving the supremum in~\eqref{eqn:dual_def} and satisfying~\eqref{eqn:c_convexity}.
By adding a constant to $f$ and $f^c$, we can assume that $\sup_{x \in X} f(x) = 1$.
Then for all $y \in X$,
\begin{equation*}
f^c(y) = \sup_{x \in X} f(x) - D(x, y)^p \geq 0\,,
\end{equation*}
and~\eqref{eqn:c_convexity} then implies
\begin{equation*}
f(x) \geq 0 \quad \forall x \in X\,,
\end{equation*}
as claimed.
\qed

\subsection{Proof of Lemma~\ref{lem:gaussian_tail}}
Let $\Sigma = \sum_{i=1}^d \lambda_i v_i v_i^\top$ be an eigendecomposition of $\Sigma$ with $\lambda_1 \geq \dots \geq \lambda_d \geq 0$.
Then $\|Z\|_2^2$ has the same distribution as $\sum_{i=1}^d \lambda_i \xi_i^2$, where $\xi_1, \dots, \xi_d$ are i.i.d. standard Gaussian random variables. 

By~\cite[Lemma~1]{LauMas00}, for any positive $t$, we have
\begin{equation*}
\p\left[\sum_{i=1}^d \lambda_i (\xi_i^2-1) \geq 2 \sqrt{ t \sum_{i=1}^d \lambda_i^2} + 2 \lambda_{1} t\right] \leq \exp(-t)\,.
\end{equation*}
Bounding both $\left(\sum_{i=1}^d \lambda_i^2\right)^{1/2}$ and $\lambda_1$ by $\Tr(\Sigma)$ yields
\begin{equation*}
\p\left[\|Z\|_2^2 \geq (1 + 2\sqrt t + 2 t)\Tr(\Sigma)\right] \leq \exp(-t)\,.
\end{equation*}
Finally, setting $c^2 = 2(\sqrt t + 1)^2$, we obtain
\begin{equation*}
\p\left[\|Z\|_2^2 \geq c^2\Tr(\Sigma)\right] \leq \exp(-(c - \sqrt 2)^2/2) \leq \exp(-c^2/4)
\end{equation*}
for all $c \geq 5$, as desired.
\qed

\section{Additional lemmas}\label{sec:proofs}
\subsection{Proof of Lemma~\ref{lem:same_mass_pi_rho}}
\begin{proof}
We first show that for any $\ell < k$, if $Q \in \cQ^{\ell}$, then
\begin{equation*}
\mu_k(Q) = \nu_k(Q)\,.
\end{equation*}

Suppose first that $Q \in \cQ^{k-1}$.
By definition, $\mu_k = \mu_{k-1} - \pi_{k-1}$.
We obtain
\begin{equation*}
\mu_k(Q) = (\mu_{k-1} - \pi_{k-1})(Q) = \min\{\mu_{k-1}(Q), \nu_{k-1}(Q)\}\,,
\end{equation*}
and likewise
\begin{equation*}
\nu_k(Q) = \min\{\mu_{k-1}(Q), \nu_{k-1}(Q)\}\,.
\end{equation*}

Since $\cQ$ is a dyadic partition, any $Q \in \cQ^\ell$ for $\ell < k$ can be written as a disjoint union of $Q_1, \dots, Q_m \in \cQ^{k-1}$.
Hence
\begin{equation*}
\mu_k(Q) = \sum_{i=1}^m \mu_k(Q_i) = \sum_{i=1}^m \nu_k(Q_i) = \nu_k(Q)\,,
\end{equation*}
as claimed.

Note that this also implies
\begin{equation*}
\pi_k(Q) = \mu_k(Q) - \mu_{k+1}(Q) = \nu_k(Q) - \nu_{k+1}(Q) = \rho_k(Q)\,.
\end{equation*}

We now prove the bound on $\pi_K(S)$.
By definition, 
\begin{equation*}
\rho_k(S) = \sum_{Q_i^k \in \cQ^k} (\nu_k(Q_i^k) - \mu_k(Q_i^k))_+ = \frac 1 2 \sum_{Q_i^k \in \cQ^k} |\nu_k(Q_i^k) - \mu_k(Q_i^k)|\,.
\end{equation*}

We now show that, for any $P \in \cQ^{k-1}$, there exist scalars $c_1, c_2 \in [0, 1]$ depending on $P$ such that
\begin{align*}
\mu_k|_P & = c_1 \mu|_P \\
\nu_k|_P & = c_2 \nu|_P\,.
\end{align*}

We proceed by induction on $k$.
By symmetry, it suffices to prove the claim for $\mu_k$ and $\mu$.
Since $\mu_1 = \mu$, it holds for $k = 1$.
Now assume $\mu_{k-1}|_P = c_1 \mu|_P$.
We have
\begin{equation*}
\mu_k|_P = \mu_{k-1}|_P - \pi_{k-1}|_P = \min\left\{\frac{\nu_{k-1}(P)}{\mu_{k-1}(P)}, 1\right\} \mu_{k-1}|_P = c_1' \mu|_P\,,
\end{equation*}
where $c_1' = \min\left\{\frac{\nu_{k-1}(P)}{\mu_{k-1}(P)}, 1\right\} c_1$.
This proves the claim.

Now, given such a $P \in \cQ^{k-1}$ and $c_1, c_2 \in [0, 1]$, we have $\mu_k(P) = \nu_k(P)$, so
\begin{equation*}
c_1 \mu(P) = c_2 \nu(P)\,.
\end{equation*}

Summing over the elements of $\cQ^k$ contained in $P$, we obtain
\begin{align*}
\sum_{Q_i^k \subset P} |\mu_k(Q_i^k) - \nu_k(Q_i^k)| & =\sum_{Q_i^k \subset P} |c_1 \mu(Q_i^k) - c_2\nu(Q_i^k)| \\
& \leq \sum_{Q_i^k \subset P} c_1 |\mu(Q_i^k) - \nu(Q_i^k)| + \sum_{Q_i^k \subset P} \nu(Q_i^k) |c_1 - c_2| \\
& = \sum_{Q_i^k \subset P} c_1 |\mu(Q_i^k) - \nu(Q_i^k)| + c_2 |\mu(P) - \nu(P)| \\
& \leq \sum_{Q_i^k \subset P} (c_1 + c_2) |\mu(Q_i^k) - \nu(Q_i^k)| \\
& \leq 2 \sum_{Q_i^k \subset P}|\mu(Q_i^k) - \nu(Q_i^k)|\,.
\end{align*}
Finally, summing over all $P \in \cQ^{k-1}$ yields
\begin{equation*}
\rho_k(S) = \frac 1 2 \sum_{Q_i^k \in \cQ^k} |\nu_k(Q_i^k) - \mu_k(Q_i^k)| \leq \sum_{Q_i^k \in \cQ^k}|\mu(Q_i^k) - \nu(Q_i^k)|\,,
\end{equation*}
as claimed.
\qed

\subsection{Proof of Lemma~\ref{lem:diameter_bound}}
Let
\begin{equation*}
\gamma = \sum_{\substack{Q_i^{k} \in \cQ^{k}\\\alpha(Q_i^{k})> 0}} \frac{\alpha \otimes \beta}{\alpha(Q_i^k)}\,.
\end{equation*}
Note that $\gamma \in C(\alpha, \beta)$.
Indeed, for any measurable $U \subset S$, since $\cQ^{k-1}$ is a partition of $S$, we have
\begin{equation*}
\gamma(S, U) = \sum_{Q_i^{k} \in \cQ^{k}} \frac{\alpha(Q_i^{k}) \beta(Q_i^{k} \cap U)}{\alpha(Q_i^{k-1})} = \beta(U)\,.
\end{equation*}
On the other hand, by assumption, $\alpha(Q_i^{k}) = \beta(Q_k^{k})$, so
\begin{equation*}
\gamma_k(U, S) = \sum_{Q_i^{k} \in \cQ^{k}} \frac{\alpha(Q_i^{k-1} \cap U) \beta(Q_i^{k-1})}{\beta(Q_k^{k-1})} = \alpha(U)\,.
\end{equation*}

We have
\begin{align*}
\int D(x, y)^p \mathrm{d}\gamma(x, y) & = \sum_{Q_i^{k} \in \cQ^{k}} \frac{1}{\alpha(Q_i^{k})}\int_{Q_i^{k}} D(x, y)^p \mathrm{d}\alpha(x) \mathrm{d}\beta(y) \\
&\leq \sum_{Q_i^{k} \in \cQ^{k}} \beta(Q_i^{k}) \diam(Q_i^{k})^p \\
&\leq  \alpha(S) \delta^{kp}\,.
\end{align*}
\end{proof}

\subsection{Proof of Lemma~\ref{lem:converse_m_bound}}
By assumption, there exist constants $c$ and $\alpha$ such that $\frac 1 c n^{\alpha} \leq \delta_n \leq c n^{\alpha}$ for all $n$ sufficiently large.
Let $M = (2 c^2)^{-1/\alpha}$.
Then for $n$ sufficiently large, 
\begin{equation*}
\delta_{M n} \leq c {M n}^{\alpha} = \frac{1}{2c} n^{\alpha} \leq \frac 1 2 \delta_n\,.
\end{equation*}
This implies that for $k$ sufficiently large, $\delta_{N_k} \leq 2^{-k}$ implies that $\delta_{M N_k} \leq 2^{-k-1}$, so that $N_{k+1} \leq M N_k$.
Hence $N_{k+1}/N_k \leq M$ for all $k$ sufficiently large, so $N_{k+1}/N_k$ is bounded for all $k$.
\qed

\subsection{Proof of Lemma~\ref{lem:mu_facts}}
We first show the key property of $\mu$.
For any $x \in [0, 1]^m$ and $r > 0$, denote by $B(x, r)$ the open $\ell_\infty$ ball of radius $r$ around $x$.
We claim that for any $x \in [0, 1]^m$ and $\ell \geq 2$,
\begin{equation*}
\mu(B(x, 2^{-\ell-1})) \leq \frac{1}{N_{\ell-2}}\,.
\end{equation*}

The claim certainly holds when $B(x, 2^{-\ell-1})$ exactly coincides with one of the cubes in $\cQ^\ell$, since each live cube in $\cQ^\ell$ has mass exactly $1/N_{\ell-2}$ by construction.

For all other $x$, note that the restriction of $\mu$ to each live cube in $\cQ^\ell$ is the same measure. In general, the cube $B(x, 2^{-\ell-1})$ intersects $2^m$ cubes cubes in $\cG_\ell$, so we can partition $B(x, 2^{-\ell-1})$ into $2^m$ pieces which, via translation, exactly cover a cube of~$\cQ^\ell$. Each piece has mass at most the mass of the corresponding piece in a live cube, hence the measure is at most the measure of a live cube.

This property immediately implies a bound on the number of balls needed to cover any set $S$ such that $\mu(S) \geq 1/2$.
Since each ball of diameter $2^{-\ell}$ has mass at most $1/N_{\ell-2}$, to cover a set of mass $1/2$ requires at least $N_{\ell-2}/2$ balls.
Therefore for all  $\ell \geq 2$,
\begin{equation}\label{eqn:ball_bound}
\cN_{2^{-\ell}}(\mu, 1/2) \geq N_{\ell-2}/2\,.
\end{equation}

Since $n \leq N_{k+1}$, we have by definition $\delta_n > 2^{-(k+1)}$.
Because $\frac{\log n}{- \log \delta_n}$ is nondecreasing and at least $1$ for all $n \geq 2$, we have as long as $n \geq 2$ that
\begin{equation*}
\frac{\log 2n}{- \log \delta_{2n}} \geq \frac{\log n}{- \log \delta_n} \geq \frac{\log 2n}{- \log \delta_n/2}\,,
\end{equation*}
and therefore $\delta_{2n} \geq \frac 1 2 \delta_n > 2^{-(k+2)}$.
This implies $N_{k+2}/2 > n$.

Choosing $\ell = k+4$ in~\eqref{eqn:ball_bound} yields
\begin{equation*}
\cN_{2^{-k - 4}}(\mu, 1/2) > n\,.
\end{equation*}
This proves the first claim.

If $N_k \leq n < N_{k+1}$, then by definition of $N_k$ and $N_{k+1}$ and the fact that $\delta_n$ is nonincreasing in $n$,
\begin{equation*}
\delta_n \leq \delta_{N_k} \leq 2^{-k}
\end{equation*}
and
\begin{equation*}
\delta_n > 2^{-k-1}\,.
\end{equation*}
This proves the second claim.

To prove the third claim, we first note that the definition of $d_n$ implies that
\begin{equation*}
n^{-1/d_n}
\end{equation*}
is nonincreasing as $n$ increases.
We can therefore prove an upper bound on $n^{-1/d_n}$ by proving an upper bound on $N_k^{-1/d_{N_k}}$.

Recall that
\begin{align*}
d_{N_k} = \inf_{\ep > 0} \max \left\{d_{\geq \ep}(\mu, \ep^p), \frac{\log N_k}{- \log \ep}\right\}\,.
\end{align*}
Choosing $\ep = 2^{-(k+2)}$ yields
\begin{equation*}
d_{N_k} \leq \max \{d_{\geq 2^{-(k+2)}}(\mu), \frac{\log_2 N_k}{k+2}\}\,.
\end{equation*}
To bound the first term, note that if $\ep' \in [2^{-\ell}, 2^{-\ell+1})$, then $\cN_{\ep'}(\mu) \leq \cN_{2^{-\ell}}(\mu) = \cN_{\ell-2}$.
Therefore $d_\ep' = \frac{\log \cN_{\ep'}(\mu)}{- \log \ep'} \leq \frac{\log N_{\ell -2}}{\ell - 1}$.

As above, since $\frac{\log n}{- \log \delta_n}$ is non decreasing and at least $1$ for all $n \geq 2$, we have that $\delta_{N_{\ell-2}} \geq \frac 1 2 \delta_{N_{\ell-2}/2} > 2^{-\ell + 1}$.
Combining this with the above bound implies that if $\ep' \in [2^{-\ell}, 2^{-\ell +1})$, then
\begin{equation*}
d_\ep' \leq \frac{\log N_{\ell - 2}}{- \log \delta_{N_{\ell -2}}}\,.
\end{equation*}
The assumption that $\frac{\log n}{- \log \delta_n}$ is nonincreasing therefore implies
\begin{equation*}
d_{\geq 2^{-{k+2}}}(\mu) \leq \max_{2 \leq \ell \leq k+2} \frac{\log N_{\ell - 2}}{- \log \delta_{N_{\ell -2}}} \leq \frac{\log N_k}{- \log \delta_{N_k}} \leq \frac{\log_2 N_k}{k}\,.
\end{equation*}
We obtain
\begin{equation*}
d_{N_k} \leq \frac{\log_2 N_k}{k}\,,
\end{equation*}
so $n^{-1/d_n} \leq N_k^{-1/d_{N_k}} \leq 2^{-k}$.

To obtain the lower bound, note that if $\ep \leq 2^{-(k+4)}$, then
\begin{equation*}
d_{\geq \ep}(\mu, \ep^p) \geq d_{2^{-(k+4)}}(\mu, 1/2) > \frac{\log_2 n}{k+4}\,,
\end{equation*}
where we have used the fact proved above that $\cN_{2^{-(k+4)}}(\mu, 1/2) > n$.
If $\ep > 2^{-(k+4)}$, then
\begin{equation*}
\frac{\log n}{- \log \ep} > \frac{\log_2 n}{k+4}\,.
\end{equation*}
Combining these bounds yields
\begin{equation*}
d_n = \inf_{\ep > 0} \max \left\{d_{\geq \ep}(\mu, \ep^p), \frac{\log n}{- \log \ep}\right\} > \frac{\log_2 n}{k+4}\,,
\end{equation*}
so
\begin{equation*}
n^{-1/d_n} > 2^{-(k+4)}\,,
\end{equation*}
as claimed.
\qed

\newcommand{\etalchar}[1]{$^{#1}$}

\end{document}